 \newtheorem{thm}{Theorem}[section]
 \newtheorem{cor}[thm]{Corollary}
 \newtheorem{lem}[thm]{Lemma}
 \theoremstyle{definition}
 \newtheorem{rem}[thm]{Remark}
 \newtheorem{ex}[thm]{Example}
 \newtheorem{ass}[thm]{Assumption}
\numberwithin{equation}{section}
 \DeclareMathOperator{\tr}{tr}
 \newcommand{\ind}{\mathbf{1}}
 \newcommand{\R}{\mathbb{R}}
 \newcommand{\Z}{\mathbb{Z}}
 \newcommand{\ic}{\mathrm{i}}
 \newcommand{\E}{\mathbb{E}}
 \newcommand{\N}{\mathbb{N}}
 \newcommand{\F}{\mathcal{F}}
 \newcommand{\OO}{\mathcal{O}}
 \newcommand{\oo}{\mbox{\scriptsize $\mathcal{O}$}}
 \newcommand{\ad}{\mathfrak{a}}
 \newcommand{\bs}[1]{\boldsymbol{#1}}
 \newcommand{\HH}{\mathbb{H}}
 \newcommand{\HHS}{\bs{\mathcal{H}}}
 \newcommand{\HSN}{\bs{\mathcal{S}}}
 \newcommand{\rd}{\mathfrak{r}}
 \newcommand{\Ln}{\mathbb{L}}
\begin{document}
\begin{frontmatter}

\title{Rate of convergence for Hilbert space valued processes}
\runtitle{Rate of convergence for Hilbert space valued processes}

\begin{aug}
\author{\fnms{Moritz} \snm{Jirak}\thanksref{a,t1}},

\thankstext{t1}{FOR 1735 {\it Structural Inference in Statistics: Adaptation and Efficiency} is gratefully acknowledged.}

\address[a]{
Institut f\"{u}r Mathematische Stochastik, Technische Universit\"{a}t Braunschweig,
Pockelsstr. 14, 38106 Braunschweig,
Germany, m.jirak@tu-braunschweig.de}

\runauthor{Jirak}
\affiliation{Braunschweig University of Technology}
\end{aug}

\begin{abstract}
Consider a stationary, linear Hilbert space valued process. We establish Berry-Essen type results with optimal convergence rates under sharp dependence conditions on the underlying coefficient sequence of the linear operators. The case of non-linear Bernoulli-shift sequences is also considered. If the sequence is $m$-dependent, the optimal rate $(n/m)^{1/2}$ is reached. If the sequence is weakly geometrically dependent, the rate $(n/\log n)^{1/2}$ is obtained.
\end{abstract}

\begin{keyword}[class=AMS]
\kwd[Primary ]{60F99}
\end{keyword}

\begin{keyword}
\kwd{Berry-Esseen}
\kwd{Linear process}
\kwd{Weak dependence}
\kwd{Hilbert space}
\end{keyword}
\end{frontmatter}

\section{Introduction}

Let $\bigl\{X_k\bigr\}_{k \in \Z}$ be a zero mean process takeing values in a separable Hilbert space $\HH$ with inner product $\langle \cdot , \cdot \rangle$ and norm $\|\cdot\|_{\HH}$. A fundamental issue in probability theory and statistics is whether or not the central limit theorem holds for the partial sum $S_n(X) = \sum_{k = 1}^n X_k$, that is, if we have
\begin{align}\label{eq_clt}
\frac{1}{\sqrt{n}} S_n(X) \xrightarrow{w} Z_{\bf \Lambda},
\end{align}
where $Z_{\bf \Lambda}$ denotes a centered Gaussian random variable with associated covariance operator
\begin{align*}
{\bf \Lambda}(\cdot) = \E\bigl[\langle Z_{\bf \Lambda}, \cdot \rangle Z_{\bf \Lambda} \bigr].
\end{align*}
Going one step further, we can ask ourselves about a possible rate of convergence in \eqref{eq_clt}, more precisely, if it holds that
\begin{align}\label{eq_clt_rate}
\lim_{n \to \infty}d\bigl(P_{S_n(X)/\sqrt{n}}, P_{Z_{\bf \Lambda}}\bigr) \, \rd_n  < \infty \quad \text{for a sequence $\rd_n \to \infty$,}
\end{align}
where $d\bigl(\cdot,\cdot\bigr)$ is a probability metric, and $P_{X}$ denotes the probability measure induced by the random variable $X$. The rate $\rd_n$ can be considered as a measure of reliability for statistical inference based on $S_n(X)$, and large rates are naturally preferred. In the context of general Hilbert space valued processes, the notion of 'probability of hitting a ball' has turned out to be a convenient formulation. More precisely, we consider the uniform metric over Balls, that is,
\begin{align}
\Delta_n(\mu) = \sup_{x \in \R}\bigl|P\bigl(\|n^{-1/2}S_n(X) + \mu\|_{\HH} \leq x \bigr) - P\bigl(\|Z_{{\bf \Lambda}} + \mu\|_{\HH} \leq x \bigr) \bigr|, \quad \mu \in \HH,
\end{align}
where $Z_{\bf \Lambda}$ is a zero mean Gaussian random variable with associated covariance operator ${\bf \Lambda}$. If $\bigl\{X_k\bigr\}_{k \in \Z}$ is IID and real valued ($\HH = \R$), a huge literature has evolved around \eqref{eq_clt_rate} in the past decades, see for instance ~\cite{petrov_book_1995}. Interestingly, if $X_k$ lies in a general infinite dimensional Hilbert space $\HH$, much less can be found in the literature. To some extent, this can certainly be attributed to the significantly higher complexity of the problem. While the first optimal results about the rate of convergence in real valued cases appeared around 1940 (cf. ~\cite{berry},~\cite{esseen_1945}), it took more than another $30$ years until analogue results were obtained if $\HH$ is a general, infinite dimensional Hilbert space. Notable contributions here among others are ~\cite{bentkus_1984}, ~\cite{goetze_1979_mmises}, ~\cite{Nagaev_1984},~\cite{Nagaev_2005}, ~\cite{ulyanov_1986} and ~\cite{yurinskii_1982}. For a more detailed account on the historic development, see ~\cite{zalesski_1988}. More recently, weakly dependent Hilbert space valued process have attracted more attention in the statistical context of functional principal component analysis, see ~\cite{hoermann_2010} and ~\cite{horvath_kokoszka_book_2012}. In this note, we are concerned with possibly dependent, stationary processes that can be represented as
\begin{align}
X_k = g_k\bigl(\{\epsilon_j\}_{j \in \Z}\bigr), \quad k \in \Z,
\end{align}
for measurable functions $g_k$ and IID random variables $\{\epsilon_k\}_{k \in \Z} \in \mathbb{S}$ for some measure space $\mathbb{S}$. Such processes are often also referred to as (non-causal) Bernoulli-shift processes. Special emphasis is devoted to non-causal linear processes, that is, we assume that $X_k$ can be represented as
\begin{align}
X_k = \sum_{j \in \Z} \alpha_j(\epsilon_{k + j}), \quad k \in \Z,
\end{align}
where $\bigl\{\epsilon_k\bigr\}_{k \in \Z} \in \HH$ is a centered IID sequence with $\E\bigl[\|\epsilon_k\|_{\HH}^2\bigr] < \infty$. Note that this implies existence of the associated covariance operator ${\bf C}^{\epsilon}$. The sequence $\bigl\{\alpha_j\bigr\}_{j \in \N}$ denotes linear operators, which we endow with the usual operator norm
\begin{align*}
\bigl\|\alpha_j\bigr\|_{\HHS} = \sup_{x \in\HH:\, \|x\|_{\HH} = 1} \bigl\|\alpha_j(x)\bigr\|_{\HH}.
\end{align*}
For notational convenience, we assume here that $\alpha_j$ maps from $\HH$ to $\HH$, but also two different Hilbert spaces are possible. Linear processes are among the first (possibly weak dependent) generalizations from the IID case, but already constitute a relevant class of processes which contains important examples from the time series literature, for instance (functional) autoregressive processes (cf. ~\cite{bosq_2000}, ~\cite{hoermann_2010}). The CLT for linear processes in Hilbert spaces was investigated, among others, in ~\cite{merlevede_1997_opt}, where it was shown that a CLT is valid if and only if
\begin{align}\label{eq_alpha_sharp}
\sum_{j \in \Z}\|\alpha_j\|_{\HHS} < \infty,
\end{align}
see below for some more comments on this result. It seems that the first results about the rate of convergence for linear processes were considered in ~\cite{bosq_2000}, where the special case of Hilbert space valued AR(1) processes was treated, and a rate of $\sqrt{n}$ was reached. Some extensions with possible suboptimal rates are obtained in ~\cite{bosq_2003}, see also the correction in ~\cite{bosq_erratum_2004}. In ~\cite{el_machkouri_2010}, the rate $\sqrt{n}$ was obtained if $\sum_{j \in \Z} |j| \|\alpha_j\|_{\HHS} < \infty$ and the sequence $\bigl\{\epsilon_k\bigr\}_{k \in \N}$ has bounded support, that is, $P\bigl(\|\epsilon_k\|_{\HH} > C \bigr) = 0$ for some $C > 0$. More recently, ~\cite{paulauskas_2011} considered random fields in Hilbert and Banach spaces. In the special case of real-valued sequences $\alpha_j \in \R$, Berry-Esseen type bounds are established if \eqref{eq_alpha_sharp} holds. However, unlike to the previous results, the approximating Gaussian measure depends on $n$ in general, which is different from our results.
\\
Regarding non-linear sequences, the problem becomes more difficult. Certain martingale difference sequences in Banach spaces have been investigated in ~\cite{basu_1988}, ~\cite{butzer_1983}. In ~\cite{rhee_talagrand_1981} and ~\cite{rhee_1986} (see also ~\cite{bjarmotas_paulauskas_1979}), $m$-dependent sequences in Banach spaces are studied, whereas ~\cite{Zuparov_1983} considers $\varphi(n)$-mixing sequences with geometric decay. Though some of these results are optimal or close to optimality in a certain way, they lead to (significantly) inferior rates for Hilbert space valued sequences, as was pointed out in ~\cite{tikhomirov_1991_BE_Hilbert}. ~\cite{tikhomirov_1991_BE_Hilbert} is a notable exception, where a convergence rate of $\mathfrak{r}_n = n^{1/2} (\log n)^{-2}$ is obtained if the sequence $\{X_k\}_{k \in \Z}$ is geometrically $\varphi(n)$-mixing and satisfies some additional regularity assumptions (cf. Section \ref{sec_main_non_lin}).\\
\\
The aim of this note is twofold. In case of linear processes, we first give a Berry-Essen result with optimal rate under sharp moment assumptions ($p \in (2,3]$) and dependence conditions. We also show that the convergence rate may be arbitrarily slow. For non-linear processes, we first study one-dependent Bernoulli-shift sequences and establish the optimal rate. Based on this result, we then consider $m$-dependent Bernoulli-shift sequences and causal, weakly dependent Bernoulli-shift sequences with geometric decay in the dependence. In the latter, we obtain a convergence rate of $(n/\log n)^{1/2}$. For $m$-dependent processes, we obtain the optimal rate $(n/m)^{1/2}$.\\
\\
This note is structured as follows. In Sections \ref{sec_main} and \ref{sec_main_non_lin} the main results are presented and discussed. Proofs are given in Section \ref{sec_proofs}. Throughout the remainder, we make the following convention. For $p \geq 1$, denote with $\|\cdot\|_p$ the $L^p$-norm $\E[|\cdot|^p]^{1/p}$. We write $\lesssim$, $\gtrsim$, ($\thicksim$) to denote (two-sided) inequalities involving a multiplicative constant. Given a set $\mathcal{A}$, we denote with $\mathcal{A}^c$ its complement.

\section{Main results: Linear Processes}\label{sec_main}

Let us first introduce some additional necessary notation. Denote with $\bigl\{\xi_k\bigr\}_{k \in \Z} \in \HH$ an IID sequence of centered Gaussian random variables, where we require that the covariance operators of $\epsilon_k$ and $\xi_k$ are equal. We then consider the Gaussian counter part of $X_k$, namely
\begin{align*}
Z_k = \sum_{j \in \Z} \alpha_j(\xi_{k + j}), \quad k \in \Z.
\end{align*}
For $k \in \Z$, we also introduce the following (linear) operators, mapping from $\HH$ to $\HH$.
\begin{align}\nonumber \label{defn_cov_operator_lin}
{\mathbf A} &= \sum_{j \in \Z} \alpha_j, \quad {\mathbf A}_{n,k} = \sum_{j = -n + k}^{k-1} \alpha_j, \\
{\bf A}_{n,k}^c &= \sum_{j < -n+k} \alpha_j + \sum_{j > k-1} \alpha_j,\quad {\mathbf \Lambda} = {\mathbf A} \mathbf{C}^{\epsilon} {\mathbf A}^*,
\end{align}
where ${\bf B}^*$ denotes the adjoint of an operator ${\bf B}$. Note that we assign ${\mathbf \Lambda}$ a more concrete form here, and indeed one readily verifies that for $x \in \HH$
\begin{align*}
{\mathbf A} \mathbf{C}^{\epsilon} {\mathbf A}^*(x) = \E\bigl[\langle Z_{\bf \Lambda}, x \rangle Z_{\bf \Lambda} \bigr], \quad \text{where $Z_{\bf \Lambda} = {\bf A}(\xi_0)$.}
\end{align*}
One of the fundamental tools when working with linear processes is the elementary and well-known Beveridge and Nelson decomposition (BND) (cf. ~\cite{beveridge_nelson_1981})
\begin{align*}
\sum_{k = 1}^n X_k &= \sum_{k = 1}^n \sum_{j = -n + k}^{k-1} \alpha_j(\epsilon_k) + \sum_{k > n} \sum_{j = - n + k}^{k - 1} \alpha_j(\epsilon_k) + \sum_{k < 1} \sum_{j = -n + k}^{k - 1}\alpha_j(\epsilon_{k})\\&= \sum_{k = 1}^n {\bf A}(\epsilon_k) - \sum_{k = 1}^n {\bf A}_{n,k}^c(\epsilon_k) + \sum_{k > n} {\mathbf A}_{n,k}(\epsilon_k) + \sum_{k < 1} {\mathbf A}_{n,k}(\epsilon_{k}).
\end{align*}
It should be mentioned though that related, much more general martingale decompositions have already appeared earlier in the literature, see for instance ~\cite{gordin_1969} and ~\cite{hannan_1973}. For the CLT, $S_n(\epsilon) = \sum_{k = 1}^n {\mathbf A}\bigl(\epsilon_k\bigr)$ is the relevant part in \eqref{eq_decomp}, and indeed we have that
\begin{align*}
n^{-1/2}S_n(\epsilon) \xrightarrow{w} Z_{\bf \Lambda} \quad \text{if $\sum_{j \in \Z}\|\alpha_j\|_{\HHS} < \infty$,}
\end{align*}
see for instance ~\cite{merlevede_1997_opt}. Unlike to the real-valued case, condition $\sum_{j \in \Z}\|\alpha_j\|_{\HHS} < \infty$ is sharp in the sense that if it fails, no CLT can hold, even not under a possibly different normalization, see ~\cite{merlevede_1997_opt}. The corresponding counter example itself is set in the Gaussian domain, i.e. $\epsilon_k = \xi_k$, and solely relies on properties of the constructed sequence of linear operators $\alpha_{j}$. Thus, to a good proportion, the question of Berry-Esseen type results for linear processes is intimately connected to distributional properties of Gaussian random variables in Hilbert spaces.

Here we use results from ~\cite{ulyanov_1986} (cf. Lemma \ref{lem_ulyanov}), and particularly Lemma \ref{lem_gauss_hoelder_inf} as our main tools for the linear case. This requires us to impose some conditions on the eigenvalues of ${\bf \Lambda}$, which we denote with $\bigl\{\lambda_j\bigr\}_{j \in \N}$. We then derive our main results under the following assumptions.

\begin{ass}\label{ass_main}
For some $2 < p \leq 3$ it holds that
\begin{description}
\item[(i)] $\E\bigl[\epsilon_k\bigr] = 0$ and $\E\bigl[\|\epsilon_k\|_{\HH}^p\bigr]< \infty$,
\item[(ii)] $\sum_{j \in \Z}\|\alpha_j\|_{\HHS} < \infty$,
\item[(iii)] $\min_{1 \leq j \leq 13}\lambda_j > 0$ for ${\bf \Lambda}$ defined in \eqref{defn_cov_operator_lin}.
\end{description}
\end{ass}

Note that Assumption \ref{ass_main} (ii) implies that ${\mathbf A}_{n,k}, {\mathbf A}$ and ${\mathbf \Lambda}$ all exist and are of trace class. Our main result of this section is given below.

\begin{thm}\label{thm_berry_esseen_II}
Grant Assumption \ref{ass_main} and let $\mu \in \HH$ with $\|\mu\|_{\HH} < \infty$. Then
\begin{align*}
\Delta_n(\mu) \lesssim n^{-\frac{p}{2} + 1}\bigl(1 + \|\mu\|_{\HH}^{p}\bigr)\E\bigl[\|\epsilon_0\|_{\HH}^p\bigr] + n^{-1}\sum_{j \in \Z}(|j|\wedge n) \bigl\|\alpha_j \bigr\|_{\HHS}\E\bigl[\|\epsilon_0\|_{\HH}^p\bigr].
\end{align*}
The constant in $\lesssim$ only depends on $\sum_{j \in \Z}\|\alpha_j\|_{\HHS}$ and $\min_{1 \leq j \leq 13}\lambda_j$.
\end{thm}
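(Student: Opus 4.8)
The plan is to peel $S_n(X)$ apart with the Beveridge--Nelson decomposition recorded above, writing $S_n(X) = S_n(\epsilon) + R_n$, where $S_n(\epsilon) = \sum_{k=1}^n \mathbf{A}(\epsilon_k)$ is the genuinely i.i.d.\ main term and
\[
R_n = -\sum_{k=1}^n \mathbf{A}_{n,k}^c(\epsilon_k) + \sum_{k>n}\mathbf{A}_{n,k}(\epsilon_k) + \sum_{k<1}\mathbf{A}_{n,k}(\epsilon_k).
\]
The virtue of this splitting is that $n^{-1/2}S_n(\epsilon)$ has $Z_{\bf \Lambda} = \mathbf{A}(\xi_0)$ as its exact Gaussian analogue (same covariance $\mathbf{\Lambda}$ after scaling), while $R_n$ is centered, $\E[R_n]=0$. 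I would then bound $\Delta_n(\mu)$ by the two pieces $T_2 = \sup_x|P(\|n^{-1/2}S_n(\epsilon)+\mu\|_{\HH}\le x) - P(\|Z_{\bf \Lambda}+\mu\|_{\HH}\le x)|$ and $T_1 = \sup_x|P(\|n^{-1/2}S_n(X)+\mu\|_{\HH}\le x) - P(\|n^{-1/2}S_n(\epsilon)+\mu\|_{\HH}\le x)|$.

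For $T_2$ I would invoke the i.i.d.\ Berry--Esseen bound of Lemma \ref{lem_ulyanov} for the summands $\mathbf{A}(\epsilon_k)$, whose covariance operator is exactly $\mathbf{\Lambda}$. Assumption \ref{ass_main}(iii) supplies the required nondegeneracy of the first $13$ eigenvalues entering the constant, and the Lyapunov ratio is $\lesssim n^{1-p/2}(1+\|\mu\|_{\HH}^p)\E\|\mathbf{A}(\epsilon_0)\|_{\HH}^p$. Using $\E\|\mathbf{A}(\epsilon_0)\|_{\HH}^p \le \|\mathbf{A}\|_{\HHS}^p\,\E\|\epsilon_0\|_{\HH}^p$ together with $\|\mathbf{A}\|_{\HHS}\le \sum_{j}\|\alpha_j\|_{\HHS}$ turns this into the first term of the claimed bound, with the constant depending only on $\sum_j\|\alpha_j\|_{\HHS}$ and $\min_{j\le 13}\lambda_j$.

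The work is in $T_1$. Here I would pass from $S_n(X)=\sum_{k=1}^n\mathbf{A}_{n,k}(\epsilon_k)+\sum_{k\notin\{1,\dots,n\}}\mathbf{A}_{n,k}(\epsilon_k)$ to $S_n(\epsilon)=\sum_{k=1}^n\mathbf{A}(\epsilon_k)$ by a term-by-term Lindeberg replacement: first delete the external summands $\mathbf{A}_{n,k}(\epsilon_k)$ with $k\notin\{1,\dots,n\}$, and then replace $\mathbf{A}_{n,k}(\epsilon_k)$ by $\mathbf{A}(\epsilon_k) = \mathbf{A}_{n,k}(\epsilon_k)+\mathbf{A}_{n,k}^c(\epsilon_k)$ for $k=1,\dots,n$. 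At each step the modification is a single summand, $\mathbf{A}_{n,k}(\epsilon_k)$ resp.\ $\mathbf{A}_{n,k}^c(\epsilon_k)$, which is centered and independent of all the remaining summands, so a second-order expansion of the distribution function $x\mapsto P(\|\,\cdot\,+\mu\|_{\HH}\le x)$ annihilates the first-order contribution. A crude alternative --- anti-concentration on an interval of width $\delta$ combined with a Markov bound $\delta^{-1}n^{-1/2}\E\|R_n\|_{\HH}$ --- gives only a square-root rate and must be avoided; it is precisely the cancellation of the first-order term, legitimised by the smoothness of the leading (near-Gaussian) distribution function supplied by the Gaussian Hölder estimate of Lemma \ref{lem_gauss_hoelder_inf} under Assumption \ref{ass_main}(iii), that yields the sharp $n^{-1}$ scaling and the $p$-th moment.

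Finally I would convert the resulting second-order remainder quantity into the coefficient sum via the elementary identities
\[
\sum_{k=1}^n\|\mathbf{A}_{n,k}^c\|_{\HHS} + \sum_{k>n}\|\mathbf{A}_{n,k}\|_{\HHS} + \sum_{k<1}\|\mathbf{A}_{n,k}\|_{\HHS}\;\lesssim\;\sum_{j\in\Z}(|j|\wedge n)\,\|\alpha_j\|_{\HHS},
\]
which follow by exchanging the order of summation and counting, for each fixed $j$, the number of indices $k$ for which $\alpha_j$ enters $\mathbf{A}_{n,k}^c$ or $\mathbf{A}_{n,k}$; that count is exactly $|j|\wedge n$. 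Combined with $\|\mathbf{A}_{n,k}\|_{\HHS},\|\mathbf{A}_{n,k}^c\|_{\HHS}\lesssim\sum_j\|\alpha_j\|_{\HHS}$ and $\E\|\epsilon_0\|_{\HH}^p<\infty$, this produces $n^{-1}\sum_{j}(|j|\wedge n)\|\alpha_j\|_{\HHS}\E\|\epsilon_0\|_{\HH}^p$, the second term of the theorem. I expect the main obstacle to be the decoupling inside $T_1$: the part of $R_n$ built from $\epsilon_1,\dots,\epsilon_n$ is statistically dependent on the leading sum $S_n(\epsilon)$, and one must still extract the sharp second-order (hence $n^{-1}$) rate rather than a square-root rate. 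The regrouping $\mathbf{A}(\epsilon_k)=\mathbf{A}_{n,k}(\epsilon_k)+\mathbf{A}_{n,k}^c(\epsilon_k)$, which renders each Lindeberg swap a centered modification independent of the remaining summands, together with the smoothness input from Lemma \ref{lem_gauss_hoelder_inf}, is the device that makes this possible.
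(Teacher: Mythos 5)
Your high-level architecture --- isolate an i.i.d.\ main term, extract the rate $n^{1-p/2}$ from Lemma \ref{lem_ulyanov}, and trace the residual cost $n^{-1}\sum_{j}(|j|\wedge n)\|\alpha_j\|_{\HHS}$ to a covariance mismatch --- matches the paper's in spirit, and your counting identity for $\sum_{k=1}^n\|{\bf A}_{n,k}^c\|_{\HHS}$ and the outer sums is exactly what the paper uses inside Lemma \ref{lem_approx_cov_operators_II}. But the step you yourself flag as the main obstacle, $T_1$, contains a genuine gap. Your Lindeberg telescope compares two \emph{non-Gaussian} sums, so each hybrid step needs a uniform-in-$x$, second-order smoothing estimate for $x\mapsto P(\|W_k+\cdot+\mu\|_{\HH}\leq x)$, where $W_k$ is the non-Gaussian sum of the unswapped terms. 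The smoothness input you invoke, Lemma \ref{lem_gauss_hoelder_inf}, cannot supply this: it is a comparison of two \emph{Gaussian} laws, and the characteristic-function bound driving it (Lemma \ref{lem_gauss_char}, used at \eqref{eq_lem_gauss_hoelder_inf_4}) requires the dominating component of the hybrid to be Gaussian plus an independent remainder. In your telescope the dominating component is the non-Gaussian $W_k$, so neither lemma applies, and making the first-order cancellation rigorous for ball indicators against a non-Gaussian background is essentially the entire content of the Hilbert-space Berry--Esseen theory: you would end up re-proving Ulyanov's theorem with the covariance mismatch tracked, not citing it.

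The paper circumvents $T_1$ altogether, and this is the one idea your proposal misses. After the BND regrouping by innovation index, $n^{-1/2}S_n(X)=S_1^{n+1}(T)$ is an \emph{exact} sum of independent centered variables (with $\sqrt{n}\,T_k={\bf A}_{n,k}(\epsilon_k)$ for $k\leq n$ and one aggregated tail term $T_{n+1}$, controlled by Lemma \ref{lem_control_Lambda_n+1}), so Lemma \ref{lem_ulyanov} applies directly to $S_n(X)$ itself --- no swap against $S_n(\epsilon)$ is ever performed. This yields Theorem \ref{thm_berry_esseen} at rate $n^{1-p/2}$, but with an $n$-dependent Gaussian covariance ${\bf \Lambda}_n$, whose first thirteen eigenvalues must be shown positive (Lemma \ref{lem_l_eigenvalue_positive}) --- a point absent from your plan, where the main-term covariance is exactly ${\bf \Lambda}$. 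The entire second term of the theorem then arises from a purely Gaussian comparison of $Z_{{\bf \Lambda}_n}$ with $Z_{\bf \Lambda}$: there the first-order cancellation you want is legitimate, because Lemma \ref{lem_gauss_char} gives integrable bounds on $|\E[\exp(\ic t\|W_{n,k}\|_{\HH}^2)]|$ for the Gaussian hybrids $W_{n,k}$, and Lemma \ref{lem_approx_cov_operators_II} together with \eqref{eq_thm_be_II_2}--\eqref{eq_thm_be_II_4} reduces everything to the trace difference, of order $n^{-1}\sum_{j}(|j|\wedge n)\|\alpha_j\|_{\HHS}$. To repair your argument with minimal change, move the dependence cost out of the pre-limit comparison and into the Gaussian world in exactly this way.
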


\begin{rem}\label{rem_ass_lambda}
The primary objective of Theorem \ref{thm_berry_esseen_II} is to provide tight bounds in terms of the rate, the sequence $\{\alpha_j\}_{j \in \N}$ and the underlying moments $p \in (2,3]$. Note however if $\mu = 0$ and $p > 3$ the results in ~\cite{bentkus_goetze_1996} suggest that the rate can be improved. Observe also if Assumption \ref{ass_main} (iii) is violated (or in fact if $\lambda_j = 0$ for some finite $j$), we are facing a multivariate problem, which has been the subject of intensive study (cf. ~\cite{goetze_hipp_1983},~\cite{sweeting}). Again results dealing with independent random variables suggest that Assumption \ref{ass_main} (iii) may be weakened, see e.g. ~\cite{zalesski_1988} and ~\cite{prokhorov_ulyanov_2013} for a general overview.
\end{rem}

Unlike to the IID case, the rate of convergence is also governed by the additional component
\begin{align*}
\mathfrak{A}_n = n^{-1}\sum_{j \in \Z}(|j|\wedge n) \bigl\|\alpha_j \bigr\|_{\HHS}.
\end{align*}
Before discussing the bound $\mathfrak{A}_n$ in more detail, we state optimality of the above result.
\begin{thm}\label{thm_lower_bound}
Grant Assumption \ref{ass_main} and let $\mu \in \HH$ with $\|\mu\|_{\HH} < \infty$. Then the upper bound in Theorem \ref{thm_berry_esseen_II} is sharp up to a constant, that is, there exist examples meeting Assumption \ref{ass_main} where the upper bound is reached up to a constant.
\end{thm}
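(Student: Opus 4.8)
The plan is to prove the two summands of Theorem~\ref{thm_berry_esseen_II} separately, exhibiting for each one a sequence $\{\alpha_j\}$ and a noise law meeting Assumption~\ref{ass_main} for which that summand alone governs $\Delta_n(\mu)$ from below. Since a sum of two nonnegative quantities is comparable to their maximum, producing one example saturating the Berry--Esseen summand $n^{-p/2+1}(1+\|\mu\|_{\HH}^p)\E\|\epsilon_0\|_{\HH}^p$ and another saturating the dependence summand $\mathfrak{A}_n\,\E\|\epsilon_0\|_{\HH}^p$ certifies that the full upper bound cannot be improved up to a constant.

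For the Berry--Esseen summand I would take $\alpha_0$ a fixed finite-rank operator, chosen so that ${\bf \Lambda}$ has at least $13$ strictly positive eigenvalues (respecting Assumption~\ref{ass_main}(iii)), and $\alpha_j = 0$ for $j \neq 0$. Then $X_k = \alpha_0(\epsilon_k)$ is i.i.d.\ and $\mathfrak{A}_n = 0$, so the second term disappears from the bound. Choosing $\epsilon_0$ with a non-Gaussian, skewed real law supported on a single eigendirection $e_1$, and taking $\mu = t e_1$ with $t$ large so that only one face of the shifted ball $\{\|\cdot + \mu\|_{\HH}\le x\}$ is charged, the ball-hitting functional reduces to the real-valued Kolmogorov distance. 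The classical lower bounds then give $\Delta_n(\mu) \gtrsim n^{-p/2+1}\E\|\epsilon_0\|_{\HH}^p$ (a nonvanishing third moment for $p=3$, the appropriate regularly varying tail for $2<p<3$). This is the routine half.

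The substantive half is the dependence summand. Here I would work in the Gaussian domain $\epsilon_k = \xi_k$, as in the counterexample of \cite{merlevede_1997_opt}, and take $\alpha_j = \theta_j\,(e_1\otimes e_1)$ plus a fixed, $n$-independent full-rank block acting on $e_2,\dots,e_{13}$ (with only a lag-$0$ coefficient there), where $\theta_j \sim j^{-\beta}\ind\{j\ge 1\}$ with $1 < \beta < p/2$. Then $\sum_j\|\alpha_j\|_{\HHS} < \infty$, yet $\mathfrak{A}_n \sim n^{1-\beta}$, which dominates $n^{-p/2+1}$. Now both $n^{-1/2}S_n(X)$ and $Z_{\bf \Lambda}$ are exactly centered Gaussian, with covariance operators ${\bf \Lambda}_n = n^{-1}\Cov(S_n(X))$ and ${\bf \Lambda}$; writing $\Gamma_h = \Cov(X_0,X_h)$ and using $n^{-1}\Cov(S_n(X)) = \sum_{|h|<n}(1-|h|/n)\Gamma_h$ together with ${\bf \Lambda} = \sum_h \Gamma_h$ gives
\begin{align*}
{\bf \Lambda} - {\bf \Lambda}_n = \sum_{h \in \Z}\frac{|h|\wedge n}{n}\,\Gamma_h .
\end{align*}
By construction this perturbation is strictly positive, supported in the single direction $e_1$, leaves the block $e_2,\dots,e_{13}$ untouched, and has magnitude $\delta_n = n^{-1}\sum_j (|j|\wedge n)\theta_j \sim \mathfrak{A}_n$. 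The problem is thus reduced to a \emph{lower} bound for the ball-hitting distance between two centered Gaussians differing by a rank-one covariance perturbation of size $\delta_n$.

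The main obstacle is exactly this reverse estimate: whereas Lemma~\ref{lem_ulyanov} and Lemma~\ref{lem_gauss_hoelder_inf} furnish matching \emph{upper} bounds, I need an anti-concentration argument showing that a one-directional covariance perturbation of magnitude $\delta_n$ changes $\sup_x P(\|\cdot+\mu\|_{\HH}\le x)$ by order $\delta_n$, not $o(\delta_n)$. I would decompose $\|G+\mu\|_{\HH}^2 = (G_1+\mu_1)^2 + V$, where $V$ collects the $\ge 12$ remaining positive eigendirections guaranteed by Assumption~\ref{ass_main}(iii) and is independent of the perturbed coordinate $G_1$, so that $V$ has a bounded, smooth density. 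Differentiating the resulting one-dimensional convolution in $\Var(G_1)$ and using that this variance is bounded away from zero (again Assumption~\ref{ass_main}(iii)) keeps the derivative uniformly negative at a suitable radius $x$, yielding $\Delta_n(\mu) \gtrsim \delta_n \sim \mathfrak{A}_n$. The remaining care lies in checking uniformity of this derivative bound, confirming that no higher-order $O(\mathfrak{A}_n^2)$ discrepancies cancel the leading effect (trivial in the Gaussian domain, since the two laws differ \emph{only} through their covariances), and verifying that the single example simultaneously satisfies all parts of Assumption~\ref{ass_main}.
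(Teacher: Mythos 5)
Your overall strategy coincides with the paper's: the $n^{-p/2+1}$ summand is disposed of via classical i.i.d.\ optimality, and the substantive $\mathfrak{A}_n$ summand is reduced to comparing two centered Gaussian laws whose covariance operators differ by a positive perturbation of trace $\asymp \mathfrak{A}_n$, finished by a mean-value/anti-concentration bound. The differences are in execution. The paper does not restrict to Gaussian innovations: since Theorem \ref{thm_berry_esseen} approximates $n^{-1/2}S_n(X)$ by $Z_{{\bf \Lambda}_n}$ with error $n^{-p/2+1} = \oo\bigl(\mathfrak{A}_n\bigr)$ (one may assume $\sum_{j}(j\wedge n)\alpha_j \to \infty$, else there is nothing to prove), it suffices to lower-bound $\sup_x\bigl|P(Z_{{\bf \Lambda}_n}\le x) - P(Z_{\bf \Lambda}\le x)\bigr|$; the paper then specializes to $\HH = \R$ with purely non-causal $\alpha_j \ge 0$ and ${\bf A} = 1$, proves $\sigma^2 - \sigma_n^2 \ge \frac{C_\alpha}{n}\sum_{j}(j\wedge n)\alpha_j$ through the identity ${\bf A}^2 - \sum_{k=1}^n {\bf A}_{n,k}^2 = 2\sum_{k=1}^n{\bf A}_{n,k}^c - \sum_{k=1}^n\bigl({\bf A}_{n,k}^c\bigr)^2$ combined with Lemma \ref{lem_control_sum_alpha_I} (your formula ${\bf \Lambda}-{\bf \Lambda}_n = \sum_h \frac{|h|\wedge n}{n}\Gamma_h$ is the same bookkeeping in covariance form), and concludes with the one-dimensional mean value theorem. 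Your choice $\epsilon_k = \xi_k$ makes $n^{-1/2}S_n(X)$ \emph{exactly} Gaussian, so your reduction is exact, and it keeps the example genuinely infinite-dimensional with Assumption \ref{ass_main}(iii) intact. Your sketched derivative-in-variance lemma, which you flag as the main obstacle, does close: by your block-diagonal construction the perturbed coordinate $G_1$ is independent of $G^{\perp}$, so conditioning on $G^{\perp}$ reduces it to precisely the paper's one-dimensional mean value step, uniformly in $n$ because $\sigma_n^2 \to \sigma^2 > 0$.

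One genuine flaw in the routine half: as written, your i.i.d.\ example contradicts Assumption \ref{ass_main}(iii). With $\alpha_j = 0$ for $j \neq 0$ and $\epsilon_0$ supported on the single direction $e_1$, the operator ${\bf \Lambda} = \alpha_0 {\bf C}^{\epsilon}\alpha_0^*$ has rank one and cannot have thirteen positive eigenvalues. Relatedly, the ball metric does not literally ``reduce to the real-valued Kolmogorov distance'': the sets $\{\|y+\mu\|_{\HH}\le x\}$ are balls, not half-spaces, and the orthogonal coordinates cannot simply be discarded. Both points are repaired simultaneously by giving $\epsilon_0$ independent nondegenerate Gaussian components on $e_2,\dots,e_{13}$ and the skewed real component on $e_1$: then $P\bigl(\|n^{-1/2}S_n(X)+te_1\|_{\HH}\le x\bigr) = \E\bigl[F_n(x^2 - V)\bigr]$ with $V$ independent and possessing a bounded density, and the $n$-independent Edgeworth correction (for $p=3$), respectively heavy-tail correction (for $2<p<3$), of order $n^{1-p/2}$ survives integration against the law of $V$ for a suitable radius $x$. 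Alternatively, you can bypass the construction entirely, as the paper does, by citing the known optimality of the rate $n^{p/2-1}$ (cf.\ ~\cite{petrov_book_1995}, and ~\cite{bentkus_goetze_1996} for $\mu \neq 0$).
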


Expression $\mathfrak{A}_n$ is particularly interesting if it dominates the rate, that is, $n^{-\frac{p}{2} + 1} = \oo\bigl(\mathfrak{A}_n\bigr)$ and hence $\rd_n = \mathfrak{A}_n^{-1}$. In order to develop this a little further, let us consider real valued functions $f$ where
\begin{align}\label{eq_fn}
\text{$f(x) \geq 0$ is monotone and $\int_0^{\infty} f(x) dx < \infty$.}
\end{align}
Put $b_j = j^2 f(j)$ for $j \in \N$ and let $\alpha_j = \frac{b_j - b_{j-1}}{j}$ for $j \in \N\setminus\{0\}$ and $\alpha_j = 0$ otherwise. Then
\begin{align*}
\sum_{j \in \Z} (|j|\wedge n) |\alpha_j| \geq \sum_{j = 1}^n j \alpha_j = b_n = n^2 f(n).
\end{align*}
On the other hand, using the monotonicity of $f(n)$, we have $f(n) n \to 0$ as $n \to \infty$, and hence summation by parts yields
\begin{align*}
\sum_{j = 1}^{n} \alpha_j \leq \frac{b_n}{n} + \sum_{j \in \N} \frac{b_j}{j^2} \leq \oo\bigl(1\bigr) + \sum_{j \in \N} f(j) < \infty, \quad n \in \N.
\end{align*}
We thus obtain the following corollary.
\begin{cor}\label{cor_bad_rate}
Let $f(x)$ be a function satisfying \eqref{eq_fn}. Then there exist examples satisfying Assumption \ref{ass_main} where $\rd_n \thicksim (n f(n))^{-1}$.
\end{cor}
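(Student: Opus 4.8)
The plan is to exhibit, for a given $f$ satisfying \eqref{eq_fn}, a concrete linear process meeting Assumption \ref{ass_main} for which the term $\mathfrak{A}_n$ dominates the Berry--Esseen bound and is of order $nf(n)$; the rate $\rd_n \thicksim (nf(n))^{-1}$ is then read off by pairing the upper bound of Theorem \ref{thm_berry_esseen_II} with the sharpness statement of Theorem \ref{thm_lower_bound}. Concretely, I would take the scalar coefficients already proposed in the text, $b_j = j^2 f(j)$ and $a_j = (b_j - b_{j-1})/j$ for $j \geq 1$ (and $a_j = 0$ otherwise), and realize them on $\HH$ by setting $\alpha_j = a_j \mathbf{I}$, so that $\|\alpha_j\|_{\HHS} = |a_j|$ and $\mathbf{A} = A_0 \mathbf{I}$ with $A_0 = \sum_{j \geq 1} a_j$. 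Choosing $f$ so that $b_j$ is nondecreasing (the case of interest, since it forces $a_j \geq 0$) keeps the bookkeeping clean.

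Next I would verify Assumption \ref{ass_main}. For (i) it suffices to take $\epsilon_0$ centered with $\E\|\epsilon_0\|_{\HH}^p < \infty$, e.g. Gaussian. For (ii) I would run the summation by parts already indicated: with $c_j = 1/j$ one obtains $\sum_{j=1}^n a_j = b_n/n + \sum_{j=1}^{n-1} b_j/(j(j+1))$, and since $nf(n) = b_n/n = \oo(1)$ by monotonicity of $f$ while $\sum_{j} b_j/j^2 = \sum_j f(j) \leq f(1) + \int_0^\infty f(x)\,dx < \infty$, the coefficients are absolutely summable and $A_0 \in (0,\infty)$. For (iii), because $\mathbf{A} = A_0 \mathbf{I}$ with $A_0 > 0$ we have $\mathbf{\Lambda} = A_0^2\, \mathbf{C}^{\epsilon}$, so the eigenvalues of $\mathbf{\Lambda}$ are $A_0^2$ times those of $\mathbf{C}^{\epsilon}$; choosing the law of $\epsilon_0$ nondegenerate on a subspace of dimension at least $13$ yields $\min_{1 \leq j \leq 13}\lambda_j > 0$.

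The heart of the argument is to pin down $\mathfrak{A}_n$. The lower bound is immediate and is exactly the computation in the text: since $|j| \wedge n = j$ for $1 \leq j \leq n$ and the telescoping sum gives $\sum_{j=1}^n j a_j = b_n = n^2 f(n)$, we get $\mathfrak{A}_n \geq n^{-1} b_n = nf(n)$. Together with Theorem \ref{thm_lower_bound} this forces $\Delta_n \gtrsim nf(n)$ on these examples, and since $nf(n) \to 0$ can be made to vanish as slowly as one wishes (subject only to $\int_0^\infty f(x)\,dx < \infty$), the convergence may be arbitrarily slow; combined with the matching upper bound of Theorem \ref{thm_berry_esseen_II} in the regime $n^{-p/2+1} = \oo(\mathfrak{A}_n)$ this yields $\rd_n \thicksim (nf(n))^{-1}$.

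I expect the genuinely delicate point to be the matching upper estimate for $\mathfrak{A}_n$, i.e. controlling the tail $\sum_{j > n}|a_j|$. A second summation by parts shows that $\mathfrak{A}_n = \sum_{j \geq n} b_j/(j(j+1))$, in which the head contribution $nf(n)$ and part of the tail partially cancel, so that $\mathfrak{A}_n \thicksim \sum_{j \geq n} f(j) \thicksim \int_n^\infty f(x)\,dx$. To conclude $\mathfrak{A}_n \thicksim nf(n)$ one therefore needs $\int_n^\infty f(x)\,dx \thicksim nf(n)$, which is where a regular-variation type hypothesis on $f$ (for instance $f(x) = x^{-1-\delta}$ with $\delta < p/2 - 1$, or any $f$ varying regularly with negative index) enters; for such $f$ the tail and the head are of the same order, and the two-sided estimate, hence the stated rate, follows. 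The lower bound $\mathfrak{A}_n \geq nf(n)$ holds for all admissible $f$ and already delivers the qualitative conclusion that the rate may be arbitrarily slow.
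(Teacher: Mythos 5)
Your proposal follows essentially the same route as the paper: the paper's entire justification of Corollary \ref{cor_bad_rate} consists of the two displayed computations preceding its statement --- the telescoping bound $\sum_{j \in \Z}(|j|\wedge n)|\alpha_j| \geq \sum_{j=1}^n j\alpha_j = b_n = n^2 f(n)$ and the summation by parts showing $\sum_{j=1}^n \alpha_j$ stays bounded --- combined with Theorems \ref{thm_berry_esseen_II} and \ref{thm_lower_bound}, exactly as you do. Your additions are sound and in places more careful than the source: realizing the scalar coefficients as $\alpha_j = a_j \mathbf{I}$ with $\epsilon_0$ nondegenerate on a subspace of dimension at least $13$ settles Assumption \ref{ass_main}(iii), which the paper leaves implicit (its own proof of Theorem \ref{thm_lower_bound} even retreats to $\HH = \R$), and restricting to nondecreasing $b_j$, so that $a_j \geq 0$, is needed in any case, since Assumption \ref{ass_main}(ii) demands $\sum_j |a_j| < \infty$ while the paper's display only bounds $\sum_{j=1}^n \alpha_j$. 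The delicate point you flag is genuine: the second Abel summation indeed yields $\mathfrak{A}_n \thicksim \sum_{j \geq n} f(j) \thicksim \int_n^{\infty} f(x)\,dx$, so the matching upper bound, and hence the two-sided $\thicksim$ asserted in the corollary, holds only under a condition like $\int_n^{\infty} f(x)\,dx \thicksim n f(n)$ (e.g.\ $f$ regularly varying of index $-\ad$, $\ad > 1$); for the paper's own borderline example $f(x) = (1+x)^{-1}\bigl(\log(1+x)\bigr)^{-\ad}$ the two quantities differ by a factor $\log n$, and for arbitrary $f$ satisfying \eqref{eq_fn} (also implicitly requiring $nf(n) \gg n^{-p/2+1}$, without which the claimed rate would beat the Berry--Esseen floor) only the one-sided conclusion $\Delta_n \gtrsim n f(n)$ is available --- which is precisely how the paper uses the corollary, namely as ``a very simple method to provide upper bounds for the rate $\rd_n$''. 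So your proposal reproduces the paper's argument and, beyond that, correctly identifies a gap that the paper's informal proof glosses over.
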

Corollary \ref{cor_bad_rate} gives a very simple method to provide upper bounds for the rate $\rd_n$. For example, setting
\begin{align*}
f(x) = (1 + x)^{-\ad} \quad \text{for $\ad > 1$ and $x \geq 0$ and $f(x) = 1$ if $x < 0$}
\end{align*}
gives the upper bound $n^{-\ad + 1}$. Logarithmic rates are obtained by $f(x) = (1 + x)^{-1} \bigl(\log(1 + x)\bigr)^{-\ad}$, $\ad > 1$, and this can be continued in the obvious way. Let us mention here that for the real valued case it is shown already in ~\cite{el_machkouri_volny_2004} that the rate of convergence in the CLT can be arbitrarily slow, where a much more general framework is considered. Let us now address the question when the rate $\rd_n = n^{\frac{p}{2} - 1}$ persists. To this end, put $\beta = \frac{p}{2} - 1$. Since we have the bound
\begin{align*}
\sum_{j \in \Z}(|j|\wedge n) \bigl\|\alpha_j \bigr\|_{\HHS} \leq n^{1 - \beta} \sum_{j \in \Z} |j|^{\beta} \bigl\|\alpha_j\bigr\|_{\HHS},
\end{align*}
we obtain the following corollary.

\begin{cor}\label{cor_fast}
Grant Assumption \ref{ass_main}. If we have in addition
\begin{align*}
\sum_{j \in \Z} |j|^{\frac{p}{2}-1}\bigl\|\alpha_j\bigr\|_{\HHS} < \infty, \quad p \in (2,3],
\end{align*}
then
\begin{align*}
\Delta_n(\mu) \lesssim n^{-\frac{p}{2} + 1}\bigl(1 + \|\mu\|_{\HH}^{p}\bigr)\E\bigl[\|\epsilon_0\|_{\HH}^p\bigr].
\end{align*}
\end{cor}

\section{Main results: Non-Linear Processes}\label{sec_main_non_lin}

As mentioned earlier, it appears that the only result which obtains optimal rates up to logarithmic factors is ~\cite{tikhomirov_1991_BE_Hilbert}, where $\{X_k\}_{k \in \Z}$ is required to be geometrically $\varphi(n)$-mixing, $X_i, X_j$ are uncorrelated for $i \neq j$ and
\begin{align*}
\E\bigl[|\langle X_k, h \rangle|^3\bigr] \lesssim \|h\|_{\HH}\E\bigl[|\langle X_k, h \rangle|^2\bigr], \quad h \in \HH, \, k \in \Z.
\end{align*}
In this Section, we follow a different path and focus on Bernoulli-shift processes. We first consider the special case of one-dependent sequences. To this end, let $\{\epsilon_k\}_{k \in \Z}$ be a sequence of IID random variables in some measure space $\mathbb{S}$, and $g:\mathbb{S} \mapsto \HH$ be a measurable map such that
\begin{align}\label{rep_two_dependence}
X_k = g(\epsilon_k, \epsilon_{k-1}), \quad k \in \Z.
\end{align}
Regarding the method of proof, this special structure will allow us to redirect the problem to the independent case (subject to a special conditional probability measure), by employing a conditioning argument. Unfortunately, as the proof shows, setting this idea to work leads to some non-trivial technicalities that need to be dealt with. To overcome these obstacles, we need to impose slightly stronger moment assumptions on $X_k$ than before.\\
\\
For our main result, Theorem \ref{thm_non_lin} below, we do not need to impose any additional conditions on $\mathbb{S}$, allowing for a large flexibility. This is demonstrated for instance by the subsequent Corollaries \ref{cor_m_dependence} and \ref{cor_geo_dependence}, where more general processes are considered. Our main assumptions are now the following.

\begin{ass}\label{ass_main_non_lin}
For some $ p \geq 9/2$ it holds that
\begin{description}
\item[(i)] $\E\bigl[\epsilon_k\bigr] = 0$ and $\E\bigl[\|X_k\|_{\HH}^p\bigr]< \infty$,
\item[(ii)] $\{X_k\}_{k \in \Z}$ satisfies \eqref{rep_two_dependence},
\item[(iii)] $\min_{1 \leq j \leq 13}\lambda_j > 0$ for ${\bf \Lambda}(\cdot)  = \sum_{|k| \leq 1}\E\bigl[\langle X_k, \cdot \rangle X_0\bigr]$.
\end{description}
\end{ass}

We then have the following result.

\begin{thm}\label{thm_non_lin}
Grant Assumption \ref{ass_main_non_lin}. Then
\begin{align*}
\Delta_n(\mu) \lesssim n^{-1/2}\bigl(1 + \|\mu\|_{\HH}^3\bigr)\E\bigl[\|X_0\|_{\HH}^{9/2}\bigr].
\end{align*}
The constant in $\lesssim$ only depends on $\min_{1 \leq j \leq 13}\lambda_j$.
\end{thm}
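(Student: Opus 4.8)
The plan is to exploit the one-dependent structure \eqref{rep_two_dependence} so as to reduce the problem, conditionally, to a sum of \emph{independent} $\HH$-valued summands, to which the Berry--Esseen bound for independent variables (Lemma \ref{lem_ulyanov}) applies, and then to average out the conditioning by means of the stability of the Gaussian law under perturbations of its covariance operator (Lemma \ref{lem_gauss_hoelder_inf}). Concretely, let $\mathcal G = \sigma(\epsilon_{2j+1} : j \in \Z)$ be the $\sigma$-field generated by the odd-indexed innovations. Since $X_k = g(\epsilon_k,\epsilon_{k-1})$, each $X_k$ involves exactly one even innovation, and the two terms $X_{2j}$ and $X_{2j+1}$ are precisely those depending on $\epsilon_{2j}$. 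Grouping them into blocks $Y_j := X_{2j}+X_{2j+1}$, each $Y_j$ is a measurable function of the single free even innovation $\epsilon_{2j}$ together with the $\mathcal G$-measurable pair $(\epsilon_{2j-1},\epsilon_{2j+1})$; hence, \emph{given} $\mathcal G$, the $\{Y_j\}$ involve disjoint free innovations and are independent. Writing
\[
S_n(X) = \sum_j \bigl(Y_j - \E[Y_j\mid\mathcal G]\bigr) + \sum_j \E[Y_j\mid\mathcal G] =: \widetilde S_n + \nu_n,
\]
I obtain, conditionally on $\mathcal G$, a centered sum $\widetilde S_n$ of independent summands and a $\mathcal G$-measurable (hence conditionally deterministic) shift $\nu_n$. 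The finitely many boundary blocks containing $X_1$ or $X_n$ are treated separately and absorbed into the error.

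Next, let $\widetilde{\bf\Lambda}_n$ denote the conditional covariance operator of $n^{-1/2}\widetilde S_n$ given $\mathcal G$. On the event that the leading eigenvalues of $\widetilde{\bf\Lambda}_n$ are bounded away from $0$, Lemma \ref{lem_ulyanov} applied conditionally gives, for every $x$,
\[
\bigl|P\bigl(\|n^{-1/2}S_n(X)+\mu\|_{\HH} \le x \mid \mathcal G\bigr) - P\bigl(\|Z_{\widetilde{\bf\Lambda}_n} + \mu + n^{-1/2}\nu_n\|_{\HH} \le x\mid\mathcal G\bigr)\bigr| \lesssim n^{-1/2}\,L_n(\mathcal G),
\]
where $L_n(\mathcal G) = n^{-1/2}\sum_j \E[\|Y_j - \E[Y_j\mid\mathcal G]\|_{\HH}^3\mid\mathcal G]$ is the conditional Lyapunov ratio and the implicit constant depends on the conditional eigenvalues. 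Because $\|Y_j\|_{\HH}^3 \lesssim \|X_{2j}\|_{\HH}^3 + \|X_{2j+1}\|_{\HH}^3$, taking expectations yields $\E[L_n(\mathcal G)] \lesssim \E[\|X_0\|_{\HH}^3]$, which is the mechanism producing the $n^{-1/2}$ rate.

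It then remains, after integrating over $\mathcal G$, to bound
\[
\sup_x\bigl|\E\bigl[P\bigl(\|Z_{\widetilde{\bf\Lambda}_n}+\mu+n^{-1/2}\nu_n\|_{\HH}\le x\mid\mathcal G\bigr)\bigr] - P\bigl(\|Z_{\bf\Lambda}+\mu\|_{\HH}\le x\bigr)\bigr|.
\]
Two effects must be reconciled here. First, the random shift $n^{-1/2}\nu_n$, whose law over $\mathcal G$ is that of a normalized one-dependent sum, carries the ``between-blocks'' part of the variance; and second, $\widetilde{\bf\Lambda}_n$ fluctuates around its mean, with $\E[\widetilde{\bf\Lambda}_n] + \Cov(n^{-1/2}\nu_n) = {\bf\Lambda}$ (up to a finite boundary correction) by the law of total variance. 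I would show that $\E\|\widetilde{\bf\Lambda}_n - \E\widetilde{\bf\Lambda}_n\|$ in trace norm is $O(n^{-1/2})$ through a variance computation for the sum of conditionally independent operator-valued terms, and then invoke Lemma \ref{lem_gauss_hoelder_inf} to turn covariance proximity into proximity of the Gaussian ball probabilities. The factor $(1+\|\mu\|_{\HH}^3)$ and the extra moments up to $\E[\|X_0\|_{\HH}^{9/2}]$ enter exactly at this stage, via the third-order Gaussian anti-concentration estimate applied to the random shift $\nu_n$ and via Hölder's inequality used to control $\|\widetilde{\bf\Lambda}_n-\E\widetilde{\bf\Lambda}_n\|^{3/2}$.

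The main obstacle is the nondegeneracy of the \emph{random} operator $\widetilde{\bf\Lambda}_n$: Lemma \ref{lem_ulyanov} requires its leading eigenvalues to be bounded below, whereas Assumption \ref{ass_main_non_lin}(iii) only guarantees this for the deterministic target ${\bf\Lambda}$. I would therefore split on the event $\{\min_{1\le j\le 13}\lambda_j(\widetilde{\bf\Lambda}_n) > c\}$ and its complement: on the good event the argument above runs, while the complement is rare because $\widetilde{\bf\Lambda}_n$ concentrates around a nondegenerate operator, so a Chebyshev bound using the higher moment $p\ge 9/2$ renders its probability $o(n^{-1/2})$ and hence negligible. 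Quantifying precisely how many moments are needed to make the degenerate event negligible while simultaneously controlling the non-Gaussian random shift $\nu_n$ is the delicate point, and it is what pins down the exponent $9/2$.
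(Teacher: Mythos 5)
Your overall architecture is genuinely the same as the paper's -- condition so that a subfamily of blocks becomes conditionally independent, apply Lemma \ref{lem_ulyanov} under the conditional measure, then swap the random-covariance Gaussian for a deterministic one and finish with a trace comparison -- but your specific construction (pair blocks $Y_j = X_{2j}+X_{2j+1}$, conditioning on \emph{all} odd innovations) breaks at two points that the paper's construction is engineered to avoid. First, nondegeneracy of the conditional covariance: with blocks of size $2$, an order-one fraction of the variance migrates into the shift $\nu_n$, so $\widetilde{\bf\Lambda}_n$ concentrates around $\E[\widetilde{\bf\Lambda}_n]$, which is \emph{not} close to ${\bf\Lambda}$; Assumption \ref{ass_main_non_lin}(iii) is a condition on ${\bf\Lambda}$ and gives no lower bound on the first thirteen eigenvalues of $\E[\widetilde{\bf\Lambda}_n]$, so your ``good event'' is not known to have high probability, and the claim that the constant depends only on $\min_{1\leq j\leq 13}\lambda_j$ is lost. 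The paper instead uses blocks of length $K$ with $K$ a \emph{free parameter}: conditioning on the even blocks (plus left endpoints) only costs the boundary terms of each block, whence $\bigl\|{\bf\Lambda}^{(o)}-{\bf\Lambda}\bigr\|_{\HHS}\lesssim \E\bigl[\|X_0\|_{\HH}^2\bigr]/K$ (see \eqref{eq_thm_non_lin_5}); choosing $K$ large but finite makes this smaller than $\delta/2$, and Lemma \ref{lem_eigen_gen_upper_bound} then transfers the eigenvalue bound from ${\bf\Lambda}$ to the conditional covariance on the concentration event $\mathcal{C}_{\delta}$. With pair blocks there is no parameter to tune, and this step has no substitute in your proposal.

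Second, your $\mathcal{G}$-measurable shift $\nu_n$ is neither negligible nor Gaussian: since $\E[Y_j\mid\mathcal{G}]$ is a function of the \emph{pair} $(\epsilon_{2j-1},\epsilon_{2j+1})$, consecutive terms share an odd innovation, so $\nu_n$ is again a one-dependent sum carrying a constant fraction of the variance -- exactly the problem you started with. Neither Lemma \ref{lem_ulyanov} (independent summands) nor Lemma \ref{lem_gauss_hoelder_inf} (which compares two Gaussians with a common deterministic mean) can absorb it, so your plan of ``law of total variance plus Lemma \ref{lem_gauss_hoelder_inf} plus anti-concentration'' never converts $\nu_n$ into its Gaussian counterpart; as written, the argument is circular at this point. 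The paper's fix is the asymmetry of $\mathcal{I}_l^* = \mathcal{I}_l\cup\{(l-1)K\}$: including only the \emph{left} endpoint in the conditioning $\sigma$-field makes $\E[V_{2l+1}\mid\F]$ depend only on the left-adjacent even block, so the $\F$-measurable part $S_L^{(e)}(V)+R_L^{(o)}(V)$ is a sum of genuinely independent blocks, normalized by a second, unconditional application of Lemma \ref{lem_ulyanov} (case ${\bf III}_L$) and then a Gaussian trace comparison (case ${\bf IV}_L$). Finally, your moment accounting is too quick: in the conditional Berry--Esseen step the random Lyapunov sum is multiplied by the $\F$-measurable factor $C_{|\F}\bigl(1+\|S_L^{(e)}(V)+R_L^{(o)}(V)+\mu\|_{\HH}^3\bigr)$, which is correlated with it, so one may not simply take $\E[L_n(\mathcal{G})]\lesssim \E\bigl[\|X_0\|_{\HH}^3\bigr]$; the paper decouples this product by concentrating $\sum_l T_l$ on the event $\mathcal{D}$ via Burkholder's inequality at exponent $3/2$, and it is the bound $\bigl\|T_l-\E[T_l]\bigr\|_{3/2}^{3/2}\lesssim n^{-3/4}\E\bigl[\|X_0\|_{\HH}^{9/2}\bigr]$ there -- not a H\"older bound on covariance fluctuations -- that pins down $p\geq 9/2$.
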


Compared to the linear case, the moment condition $p \geq 9/2$ appears to be suboptimal. On the other hand, for $\mu \neq 0$, the rate $n^{1/2}$ is optimal also for $p > 3$, see for instance ~\cite{bentkus_goetze_1996}.\\
\\
The flexibility in the setup allows us to treat Hilbert space valued $m$-dependent \textit{potential functions} (cf. ~\cite{goetze_Hipp_1989_filed_ptrf} for the real valued analogue). More precisely, for $m \in \N$, let
\begin{align}\label{potential_functions}
X_k = g_m(\epsilon_k, \epsilon_{k-1}, \ldots, \epsilon_{k-m+1}), \quad k \in \Z,
\end{align}
for measurable functions $g_m:\mathbb{S}^{m} \mapsto \HH$. We explicitly allow that $m = m_n$ with $m = \oo(n)$ may depend on the sample size $n$. The crucial condition here is the non-degeneracy assumption
\begin{align}\label{eq_var_condi}
\liminf_{n \to \infty} \E\bigl[\|S_n(X)\|_{\HH}^2 \bigr]/(nm) > 0.
\end{align}
The underlying covariance operator is then given as
\begin{align}\label{defn_cov_m_dpendent}
{\bf \Lambda}_{m}(\cdot) = m^{-2}\sum_{|l| \leq 1}\E\bigl[\langle B_l, \cdot \rangle B_0\bigr], \quad B_l = \sum_{k = (l-1)m +1}^{lm} X_k.
\end{align}
We now modify Assumption \ref{ass_main_non_lin} (iii) to
\begin{align}\label{eigen_m_dpendent}
\inf_{m} \min_{1 \leq j \leq 13}\lambda_{j,m} > 0, \quad \text{with ${\bf \Lambda}_{m}(\cdot)$ as in \eqref{defn_cov_m_dpendent},}
\end{align}
to obtain the following corollary.
\begin{cor}\label{cor_m_dependence}
Grant Assumption \ref{ass_main_non_lin} (i), and assume in addition the validity of \eqref{potential_functions}, \eqref{eq_var_condi} and \eqref{eigen_m_dpendent} with $m = \oo(n)$. Then
\begin{align*}
&\sup_{x \in \R}\Big|P\Bigl(\Bigl\|(nm)^{-1/2}S_n(X) + \mu\Bigr\|_{\HH} \leq x \Bigr) - P\Bigl(\Bigl\|Z_{{\bf \Lambda}_{m}} + \mu\Bigr\|_{\HH} \leq x \Bigr) \Bigr| \\&\lesssim (n/m)^{-1/2} \bigl(1 + \|\mu\|_{\HH}^{3}\bigr)\E\bigl[\|X_0\|_{\HH}^{9/2}\bigr].
\end{align*}
\end{cor}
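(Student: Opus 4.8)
The plan is to reduce the $m$-dependent case to the one-dependent setting of Theorem \ref{thm_non_lin} by a \emph{blocking} argument. First I would partition the index set $\{1,\ldots,n\}$into consecutive blocks of length $m$, setting $B_l = \sum_{k=(l-1)m+1}^{lm} X_k$ as in \eqref{defn_cov_m_dpendent}. The key observation is that, because $X_k = g_m(\epsilon_k,\ldots,\epsilon_{k-m+1})$ depends on only $m$ consecutive innovations, two blocks $B_l$ and $B_{l'}$ are \emph{independent} whenever $|l-l'| \geq 2$: the innovation windows feeding $B_l$ and $B_{l'}$ are disjoint. Hence, writing $N = n/m$ (I would assume $m \mid n$, absorbing the harmless boundary block otherwise), the block sequence $\{B_l\}_{l=1}^{N}$ is itself a \emph{one-dependent} sequence, and moreover is a Bernoulli-shift of the form \eqref{rep_two_dependence} with respect to the super-innovations $\tilde\epsilon_l = (\epsilon_{(l-1)m+1},\ldots,\epsilon_{lm})$, since $B_l = \tilde g(\tilde\epsilon_l,\tilde\epsilon_{l-1})$ for a suitable measurable $\tilde g$. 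This is exactly the structure to which Theorem \ref{thm_non_lin} applies.

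Next I would verify the three hypotheses of Assumption \ref{ass_main_non_lin} for the rescaled block process $\{m^{-1}B_l\}_{l=1}^N$. The representation \eqref{rep_two_dependence} holds by the preceding paragraph. For the covariance nondegeneracy, note that the operator governing $N^{-1/2}\sum_l m^{-1}B_l$ is precisely ${\bf \Lambda}_m$ from \eqref{defn_cov_m_dpendent}, so condition (iii) of Theorem \ref{thm_non_lin} becomes $\min_{1\le j\le13}\lambda_{j,m} > 0$, which is guaranteed \emph{uniformly in $m$} by \eqref{eigen_m_dpendent} — this uniformity is what makes the constant in the final bound independent of $m$. Applying Theorem \ref{thm_non_lin} to $\{m^{-1}B_l\}$ then yields
\begin{align*}
\sup_{x \in \R}\Bigl|P\Bigl(\bigl\|N^{-1/2}\textstyle\sum_{l=1}^N m^{-1}B_l + \mu\bigr\|_{\HH} \leq x\Bigr) - P\bigl(\|Z_{{\bf \Lambda}_m} + \mu\|_{\HH} \leq x\bigr)\Bigr| \lesssim N^{-1/2}\bigl(1 + \|\mu\|_{\HH}^3\bigr)\E\bigl[\|m^{-1}B_0\|_{\HH}^{9/2}\bigr],
\end{align*}
and since $N^{-1/2}\sum_l m^{-1}B_l = (nm)^{-1/2}S_n(X)$ and $N^{-1/2} = (n/m)^{-1/2}$, this is already the left-hand side and the correct rate of the asserted inequality.

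The one remaining, and I expect the \emph{main}, obstacle is controlling the $9/2$-th moment of the normalized block $m^{-1}B_0 = m^{-1}\sum_{k=1}^m X_k$ in terms of $\E[\|X_0\|_{\HH}^{9/2}]$, so that the right-hand side collapses to $(n/m)^{-1/2}(1+\|\mu\|_{\HH}^3)\E[\|X_0\|_{\HH}^{9/2}]$ as stated. Naively the triangle inequality gives only $\E[\|m^{-1}B_0\|_{\HH}^{9/2}] \le \E[\|X_0\|_{\HH}^{9/2}]$, which is too crude to exploit the nondegeneracy; the point is that $B_0$ is a sum of $m$ (itself $m$-dependent) summands, so under \eqref{eq_var_condi}, which forces $\E[\|B_0\|_{\HH}^2] \sim m$, a Rosenthal-type moment inequality for the Hilbert-space-valued dependent sum should yield $\E[\|B_0\|_{\HH}^{9/2}] \lesssim m^{9/4}\E[\|X_0\|_{\HH}^{9/2}]$, whence $\E[\|m^{-1}B_0\|_{\HH}^{9/2}] \lesssim m^{-9/4}\cdot m^{9/4}\E[\|X_0\|_{\HH}^{9/2}] = \E[\|X_0\|_{\HH}^{9/2}]$. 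Establishing this moment bound with the correct power of $m$ — genuinely using the $m$-dependence and \eqref{eq_var_condi} rather than mere stationarity — is the technical heart of the argument; the blocking reduction and the invocation of Theorem \ref{thm_non_lin} are then routine.
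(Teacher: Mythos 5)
Your blocking reduction is exactly the paper's proof: write $(nm)^{-1/2}S_n(X) = (n/m)^{-1/2}\sum_{l=1}^{L} B_l/m$ with $L = n/m$, observe that $\{B_l/m\}$ is a one-dependent Bernoulli shift of the form \eqref{rep_two_dependence} in the super-innovations $\tilde\epsilon_l = (\epsilon_{(l-1)m+1},\ldots,\epsilon_{lm})$ with covariance operator ${\bf \Lambda}_{m}$, use \eqref{eigen_m_dpendent} to make the constant in Theorem \ref{thm_non_lin} uniform in $m$, apply that theorem, and absorb the boundary block when $L \not\in \N$. Up to your final paragraph the argument is complete and coincides with the paper's (deliberately short) proof.

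The final paragraph, however, manufactures an obstacle that is not there and then proposes a false estimate to overcome it. The triangle-inequality bound $\E\bigl[\|m^{-1}B_0\|_{\HH}^{9/2}\bigr] \leq \E\bigl[\|X_0\|_{\HH}^{9/2}\bigr]$, which you dismiss as ``too crude,'' is precisely what is needed: substituted into your displayed application of Theorem \ref{thm_non_lin}, it collapses the right-hand side to $(n/m)^{-1/2}\bigl(1 + \|\mu\|_{\HH}^3\bigr)\E\bigl[\|X_0\|_{\HH}^{9/2}\bigr]$, and the proof ends there. By contrast, the Rosenthal-type claim $\E\bigl[\|B_0\|_{\HH}^{9/2}\bigr] \lesssim m^{9/4}\E\bigl[\|X_0\|_{\HH}^{9/2}\bigr]$ cannot hold in this setting: within a single block the summands $X_k$ are mutually dependent (the dependence range equals the block length, so there is no independence to exploit), and, worse, the claim is inconsistent with the very condition \eqref{eq_var_condi} you invoke. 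Indeed, by one-dependence of the blocks, $\E\bigl[\|S_n(X)\|_{\HH}^2\bigr] \thicksim L\,\E\bigl[\|B_0\|_{\HH}^2\bigr]$, so \eqref{eq_var_condi} forces $\E\bigl[\|B_0\|_{\HH}^2\bigr] \gtrsim nm/L = m^2$ (not $\thicksim m$ as you write), whence Jensen gives $\E\bigl[\|B_0\|_{\HH}^{9/2}\bigr] \gtrsim m^{9/2}$, contradicting your $m^{9/4}$. The role of \eqref{eq_var_condi} together with \eqref{eigen_m_dpendent} is not to sharpen moment bounds but to guarantee that $(nm)^{-1/2}$ is the non-degenerate normalization, i.e.\ that ${\bf \Lambda}_{m}$ is uniformly non-degenerate in $m$, so that $\min_{1 \leq j \leq 13}\lambda_{j,m}$ stays bounded away from zero as $m = m_n$ grows. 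With that correction, your proof is the paper's proof.
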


Recall that the rate $(n/m)^{1/2}$ is optimal even for real-valued cases, see for instance ~\cite{chen_2004_aop},~\cite{tikhomirov_1981} for analogue univariate and multivariate results (a reparametrization is necessary to obtain this explicit form of the rate), and ~\cite{sunklodas_1997} for a lower bound.

A different dependence setup is if $\{X_k\}_{k \in \Z}$ exhibits weak dependence, the latter only coinciding with $m$-dependency in general if $m$ is finite and independent of $n$. A huge variety of weak dependence concepts have been discussed in the literature, see for example ~\cite{dedecker_prieur_2005} and ~\cite{wu_2005}. In our context, the notion of Bernoulli-shift processes together with coupling coefficients is particularly useful (cf. ~\cite{wu_2005}). For Hilbert space valued processes, a related concept is $\Ln^p-m$ approximability, see ~\cite{hoermann_2010}. To formalise the setup, consider
\begin{align}\label{bernoulli_functions}
X_k = g(\epsilon_k, \epsilon_{k-1}, \ldots ) \quad k \in \Z,
\end{align}
for measurable functions $g:\mathbb{S}^{\infty} \mapsto \HH$. Let $\{\epsilon_k'\}_{k \in \Z}$ be an independent copy of $\{\epsilon_k\}_{k \in \Z}$. We then define the 'coupled' random variable $X_k'$ as
\begin{align*}
X_k' = g(\epsilon_k,\ldots,\epsilon_1,\epsilon_0',\epsilon_{-1},\ldots) \quad k \in \N,
\end{align*}
see ~\cite{wu_2005} for more details on this kind of coupling. Dependence measures can now be constructed by measuring the distance between $X_k$ and $X_k'$, a popular measure being
\begin{align}\label{bernoulli_coupling}
\theta_p(k) = \E\bigl[\|X_k-X_k'\|_{\HH}^p\bigr]^{1/p}, \quad p \geq 1,
\end{align}
which we use in the sequel. In the presence of infinite dependence, the underlying covariance operator is now (formally) defined as
\begin{align}\label{bernoulli_cov}
{\bf \Lambda}(\cdot) = \sum_{k \in \Z} \E\bigl[\langle X_k, \cdot \rangle X_0 \bigr].
\end{align}
Existence holds if $\sum_{k \in \N} \theta_2(k) < \infty$, see for instance ~\cite{dedecker_2003}. As before, we modify Assumption \ref{ass_main_non_lin} (iii) to
\begin{align}\label{eigen_bernoulli}
\min_{1 \leq j \leq 13}\lambda_{j} > 0, \quad \text{with ${\bf \Lambda}_{}(\cdot)$ as in \eqref{bernoulli_cov}.}
\end{align}
We then have the following result.
\begin{cor}\label{cor_geo_dependence}
Grant Assumption \ref{ass_main_non_lin} (i), and assume the validity of \eqref{bernoulli_functions} and \eqref{eigen_bernoulli}.  If in addition $\theta_{9/2}(k) \lesssim \rho^k$, $0 < \rho < 1$, then
\begin{align*}
\Delta_n(\mu) \lesssim (n/\log n)^{-1/2} \bigl(1 + \|\mu\|_{\HH}^{3}\bigr)\E\bigl[\|X_0\|_{\HH}^{9/2}\bigr].
\end{align*}
\end{cor}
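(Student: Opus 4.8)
The plan is to reduce Corollary \ref{cor_geo_dependence} to the one-dependent Theorem \ref{thm_non_lin} by a truncation-and-blocking argument, taking both the truncation level and the block length to be of order $\log n$. First I would truncate the infinite dependence: for $m \in \N$ set
\[
X_k^{(m)} = \E\bigl[X_k \mid \epsilon_k, \epsilon_{k-1}, \ldots, \epsilon_{k-m+1}\bigr],
\]
which is a centered $m$-dependent potential function of the form \eqref{potential_functions}. Replacing the innovations at lags $\geq m$ by independent copies and telescoping, the geometric decay $\theta_{9/2}(k) \lesssim \rho^k$ yields the key bound $\|X_k - X_k^{(m)}\|_{9/2} \lesssim \sum_{j \geq m}\theta_{9/2}(j)\lesssim\rho^m$, uniformly in $k$. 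Since the covariances of $X_k^{(m)}$ inherit this geometric decay, the truncated process still has finite long-run covariance ${\bf \Lambda}^{(m)}(\cdot)=\sum_{|k|\leq m-1}\E[\langle X_k^{(m)},\cdot\rangle X_0^{(m)}]$, with $\|{\bf \Lambda}^{(m)}-{\bf \Lambda}\|\lesssim m\rho^m\to 0$; in particular \eqref{eigen_bernoulli} transfers to a uniform lower bound $\min_{1\leq j\leq 13}\lambda_{j}({\bf \Lambda}^{(m)})\geq c>0$ for all large $m$.

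Next I would block. Put $b=m$, group the indices into consecutive blocks $I_l=\{(l-1)b+1,\ldots,lb\}$ for $l=1,\ldots,N$ with $N=\lfloor n/b\rfloor$, and write $\tilde\epsilon_l=(\epsilon_{(l-1)b+1},\ldots,\epsilon_{lb})$ for the IID block innovations. Since each $X_k^{(m)}$ with $k\in I_l$ depends only on $\epsilon_j$ with $j\in[(l-1)b-m+2,\,lb]\subset I_{l-1}\cup I_l$ (this is where $m\leq b$ enters), the normalized block sums $\bar Y_l=b^{-1/2}\sum_{k\in I_l}X_k^{(m)}$ satisfy $\bar Y_l=\bar h(\tilde\epsilon_l,\tilde\epsilon_{l-1})$, i.e.\ they form a centered one-dependent Bernoulli-shift sequence of the form \eqref{rep_two_dependence}. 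A Rosenthal/Marcinkiewicz--Zygmund inequality for weakly dependent Hilbert-space sums gives $\|\sum_{k\in I_l}X_k^{(m)}\|_{9/2}\lesssim b^{1/2}\|X_0\|_{9/2}$ with a constant depending only on $\rho$, whence $\E[\|\bar Y_0\|_{\HH}^{9/2}]\lesssim\E[\|X_0\|_{\HH}^{9/2}]$ uniformly in $m$; moreover the covariance operator of $\{\bar Y_l\}$ equals exactly ${\bf \Lambda}^{(m)}$. Thus Assumption \ref{ass_main_non_lin} holds for $\{\bar Y_l\}$ with uniform constants, and since $N^{-1/2}S_N(\bar Y)=n^{-1/2}S_n(X^{(m)})$ up to a remainder of at most $b$ terms of order $(n/\log n)^{-1/2}$, Theorem \ref{thm_non_lin} yields
\[
\sup_{x\in\R}\bigl|P(\|n^{-1/2}S_n(X^{(m)})+\mu\|_{\HH}\leq x)-P(\|Z_{{\bf \Lambda}^{(m)}}+\mu\|_{\HH}\leq x)\bigr|\lesssim N^{-1/2}(1+\|\mu\|_{\HH}^3)\E[\|X_0\|_{\HH}^{9/2}].
\]
With $b=m=\lceil C\log n\rceil$ one has $N^{-1/2}\thicksim(n/\log n)^{-1/2}$, the target rate.

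It then remains to transfer the two approximations. For the process, $R_n:=n^{-1/2}\|S_n(X)-S_n(X^{(m)})\|_{\HH}$ obeys $\|R_n\|_{9/2}\lesssim n^{1/2}\rho^m$, which for $C$ large is smaller than any polynomial in $n^{-1}$. Splitting the ball event at radius $\eta=(n/\log n)^{-1/2}$ and bounding $P(R_n>\eta)$ by Markov, the transfer costs only $\eta$ plus the anti-concentration of $\|Z_{{\bf \Lambda}^{(m)}}+\mu\|_{\HH}$ across a window of width $\eta$, which is $\lesssim\eta$ by the uniform eigenvalue bound (Lemma \ref{lem_ulyanov}); both are of order $(n/\log n)^{-1/2}$. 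For the Gaussian limit, coupling $Z_{{\bf \Lambda}^{(m)}}$ and $Z_{{\bf \Lambda}}$ and combining $\|{\bf \Lambda}^{(m)}-{\bf \Lambda}\|\lesssim m\rho^m$ with the same anti-concentration estimate shows that $\sup_x|P(\|Z_{{\bf \Lambda}^{(m)}}+\mu\|_{\HH}\leq x)-P(\|Z_{{\bf \Lambda}}+\mu\|_{\HH}\leq x)|$ is negligible. Combining the three bounds proves the claim.

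I expect the blocking step to be the main obstacle. One must verify that the truncated variables $X_k^{(m)}$, although formally $m$-dependent, retain geometrically decaying within-range correlations, so that $\E[\|S_n(X^{(m)})\|_{\HH}^2]\thicksim n$ rather than $nm$ and the moment inequality delivers the $b^{1/2}$--- rather than $b$---scaling. This is precisely what keeps $\E[\|\bar Y_0\|_{\HH}^{9/2}]$ bounded and prevents spurious powers of $\log n$ from entering the rate, and it is also what distinguishes this weakly dependent regime from the potential-function setting of Corollary \ref{cor_m_dependence}, where the variance genuinely grows like $nm$.
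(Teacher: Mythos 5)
Your proposal is correct and is essentially the paper's own argument: the paper proves this corollary by citing Theorem \ref{thm_non_lin} together with the proof of Theorem 2.1 and Corollary 2.2 in \cite{Jirak_2015_pol}, which is precisely the truncation to an $m$-dependent approximation $X_k^{(m)}$ with $m \asymp \log n$, blocking into one-dependent normalized block sums so that Theorem \ref{thm_non_lin} applies with sample size $N \asymp n/\log n$, and transfer of both the process and the Gaussian limit via geometric coupling bounds and anti-concentration, exactly as you outline. Your observations that the block-sum covariance operator coincides with ${\bf \Lambda}^{(m)}$ and that the variance scales like $n$ rather than $nm$ (so the $b^{-1/2}$ normalization keeps the $9/2$-moments uniformly bounded) are the right technical points, matching the cited argument.
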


The literature provides a huge variety of important examples of processes displaying a geometric decay in $\theta_{p}(k)$. A prominent example is the following.

\begin{ex}[ARCH-processes]
Let $\beta \in \Ln^2\bigl([0,1]^2\bigr)$ be a non-negative Kernel and $\{\epsilon_{k}\}_{k \in \Z} \in \Ln^2([0,1])$ be an IID sequence with $\E[\epsilon_k] = 0$. If the function $\mu \in \Ln^2([0,1])$ is positive, then we call the process
\begin{align*}
X_k = X_k(t) = \epsilon_k(t) \sigma_{k-1}(t), \quad k \in \Z, \, t \in [0,1],
\end{align*}
with
\begin{align*}
\sigma_k^2(t) = \mu(t) + \int_0^1 \beta(t,s) X_{k-1}^2(s)ds, \quad k \in \Z, \, t \in [0,1],
\end{align*}
the functional ARCH(1)-process. To see why $X_k$ fits into our framework (satisfies representation \eqref{bernoulli_functions}) is by formally iterating the recursion, yielding (with $t = t_0$)
\begin{align*}
X_k(t_0) = \epsilon_k(t_0) \Big(\mu(t_0) + \sum_{i= 1}^{\infty} \prod_{j = 1}^{i}\int_0^1 \beta(t_{j-1},t_j) \epsilon_{k-j}^2(t_j) \tilde{\mu}_i(j,t_j) d t_j\Big)^{1/2},
\end{align*}
where $\tilde{\mu}_i(j,\cdot) = 1 $ for $i \neq j$ and $\tilde{\mu}_i(i,\cdot) = \mu(\cdot)$.
This formal argument can be made rigorous by using Proposition 2.3 in ~\cite{hoermann_2010} (see also ~\cite{hoermann_horvath_reeder_2013}, Theorem 2.1), provided that
$\E\bigl[K^p(\epsilon_0^2)\bigr] < 1$ with $p \geq 2$, where
\begin{align*}
K^2(\epsilon_0^2) = \int_0^1\int_0^1 \beta^2(s,t) \epsilon_0^4(s) ds.
\end{align*}
Moreover, Proposition 2.3 in ~\cite{hoermann_2010} also implies $\theta_{p}(k) \lesssim \rho^{k}$ with $0 < \rho < 1$ for $p \geq 2$. Hence if $p \geq 9/2$ and \eqref{eigen_bernoulli} holds, Corollary \ref{cor_geo_dependence} applies.
\end{ex}

For additional examples with geometric decay, we refer to ~\cite{hoermann_2010}.

\section{Proofs}\label{sec_proofs}

We first deal with the results concerning linear processes, given in Section \ref{sec_main}. This is then followed by the proofs of Section \ref{sec_main_non_lin}. We first collect and review some required results from the literature we make repeated use of.\\
\\
Consider two compact operators $\mathbf{K}, \mathbf{L}$ with singular value decompositions
\begin{align}\label{eq_singular_deco}
\mathbf{K}(x) = \sum_{j \in \N} \lambda_j^K \langle x, u_j \rangle f_j, \quad \mathbf{L}(x) = \sum_{j \in \N} \lambda_j^L \langle x, v_j \rangle g_j.
\end{align}
The following lemma is proven in Section VI.1 of Gohberg et al. ~\cite{gohberg_2003}, see their Corollary 1.6 on p. 99

\begin{lem}\label{lem_eigen_gen_upper_bound}
Let $\mathbf{K}$ and $\mathbf{L}$ be compact operators with singular value decompositions as in \eqref{eq_singular_deco}. Then
\begin{align*}
\bigl|\lambda_j^L - \lambda_j^K\bigr| \leq \bigl\|{\mathbf K} - \mathbf{L}\bigr\|_{\HHS}.
\end{align*}
\end{lem}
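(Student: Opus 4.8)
The plan is to prove this via the Courant--Fischer min--max characterisation of singular values, which converts the perturbation bound into an elementary triangle inequality. Recall that the singular values $\lambda_1^K \geq \lambda_2^K \geq \dots$ appearing in \eqref{eq_singular_deco} are precisely the eigenvalues of the compact, self-adjoint, non-negative operator $(\mathbf{K}^*\mathbf{K})^{1/2}$, arranged in decreasing order, and that for such operators the spectral theorem supplies the variational formula
\begin{align*}
\lambda_j^K = \min_{\substack{M \subseteq \HH\\ \dim M \leq j-1}} \ \sup_{\substack{x \perp M\\ \|x\|_{\HH} = 1}} \|\mathbf{K}(x)\|_{\HH},
\end{align*}
the outer minimum being attained at $M = \spann\{u_1, \dots, u_{j-1}\}$.

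First I would fix $j$ and let $M^\ast = \spann\{u_1, \dots, u_{j-1}\}$ be the optimal subspace for $\mathbf{K}$, so that $\sup_{x \perp M^\ast,\, \|x\|_{\HH} = 1} \|\mathbf{K}(x)\|_{\HH} = \lambda_j^K$. Feeding this same (generally suboptimal) subspace into the min--max formula for $\mathbf{L}$ and using the triangle inequality together with the definition of the operator norm $\|\cdot\|_{\HHS}$ gives, for every unit vector $x \perp M^\ast$,
\begin{align*}
\|\mathbf{L}(x)\|_{\HH} \leq \|\mathbf{K}(x)\|_{\HH} + \|(\mathbf{L} - \mathbf{K})(x)\|_{\HH} \leq \lambda_j^K + \|\mathbf{L} - \mathbf{K}\|_{\HHS}.
\end{align*}
Taking the supremum over such $x$ and then the minimum over all subspaces of dimension at most $j-1$ yields $\lambda_j^L \leq \lambda_j^K + \|\mathbf{L} - \mathbf{K}\|_{\HHS}$.

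The argument is symmetric in $\mathbf{K}$ and $\mathbf{L}$, so interchanging their roles gives the reverse inequality $\lambda_j^K \leq \lambda_j^L + \|\mathbf{L} - \mathbf{K}\|_{\HHS}$; combining the two bounds produces the claim. The only genuinely non-trivial ingredient --- and the step I would be most careful about --- is justifying the min--max formula in the infinite-dimensional, compact setting: one must invoke the existence of the singular value decomposition for compact operators (so that the spectrum of $(\mathbf{K}^*\mathbf{K})^{1/2}$ is discrete away from $0$ and the optimal subspace $M^\ast$ genuinely exists), after which the Courant--Fischer principle transfers verbatim from the self-adjoint finite-rank case. Everything downstream is the two-line triangle-inequality computation above.
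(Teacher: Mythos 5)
Your proof is correct. The paper does not prove this lemma itself --- it simply cites Corollary 1.6 on p.~99 of Gohberg, Goldberg and Kaashoek \cite{gohberg_2003} --- and the Courant--Fischer/Weyl perturbation argument you give (feed the optimal subspace $\spann\{u_1,\dots,u_{j-1}\}$ for $\mathbf{K}$ into the min--max formula for $\mathbf{L}$, apply the triangle inequality, symmetrize) is precisely the standard proof behind that citation, with the one genuinely delicate point --- validity of the min--max characterisation of singular values for compact operators, including the dimension-counting direction and the fact that vectors orthogonal to all $u_k$ are annihilated by $\mathbf{K}$ --- correctly identified and handled.
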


The next lemma appears in some variants in the literature, see for instance ~\cite{bentkus_1984}, ~\cite{ulyanov_1986} and ~\cite{yurinskii_1982}.

\begin{lem}\label{lem_gauss_char}
Let $Z \in \HH$ be a zero mean Gaussian random variable and ${\bf C}^{Z}$ its covariance operator. Let $Y \in \HH$ be another, independent random variable. Then
\begin{align*}
\bigl|\E\bigl[\exp(\ic t \|Z + Y\|_{\HH}^2)\bigr] \bigr| \leq \prod_{k = 1}^{\infty} \bigl(1 + 4 t^2 (\lambda_k^Z)^2 \bigr)^{-1/4},
\end{align*}
where $\lambda_k^Z$ denotes the eigenvalues of ${\bf C}^Z$.
\end{lem}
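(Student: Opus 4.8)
The plan is to reduce the infinite-dimensional statement to a one-dimensional computation via the Karhunen--Lo\`eve expansion, and then bound the modulus of a non-central chi-square characteristic function. First I would diagonalize the covariance operator: since ${\bf C}^Z$ is self-adjoint, positive and (being the covariance of a Hilbert space valued Gaussian) trace class, it admits an orthonormal eigenbasis $\{e_k\}_{k \in \N}$ with eigenvalues $\lambda_k^Z \geq 0$. Writing $Z_k = \langle Z, e_k \rangle$, the coordinates $\{Z_k\}$ are independent real Gaussians with $Z_k \sim \Gaussian(0, \lambda_k^Z)$. Setting $Y_k = \langle Y, e_k \rangle$, Parseval gives $\|Z + Y\|_{\HH}^2 = \sum_{k} (Z_k + Y_k)^2$. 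Since $Z$ and $Y$ are independent, conditionally on $Y$ the variables $\{Z_k + Y_k\}_k$ remain independent, so that
\begin{align*}
\E\bigl[\exp(\ic t \|Z+Y\|_{\HH}^2) \mid Y \bigr] = \prod_{k} \E\bigl[\exp\bigl(\ic t (Z_k + Y_k)^2\bigr) \mid Y \bigr],
\end{align*}
where the finite truncations converge by dominated convergence, the integrand having modulus one.

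The core of the argument is the one-dimensional identity for $W \sim \Gaussian(0,\sigma^2)$ and a real constant $a$, namely
\begin{align*}
\E\bigl[\exp(\ic t (W+a)^2)\bigr] = (1 - 2\ic t \sigma^2)^{-1/2}\exp\Bigl(\frac{\ic t a^2}{1 - 2\ic t \sigma^2}\Bigr),
\end{align*}
obtained by completing the square in the Gaussian integral and analytically continuing the real Laplace-transform formula $\E[\exp(s(W+a)^2)] = (1-2s\sigma^2)^{-1/2}\exp(sa^2/(1-2s\sigma^2))$ to $s = \ic t$. Taking modulus, the prefactor contributes $|1 - 2\ic t \sigma^2|^{-1/2} = (1 + 4 t^2 \sigma^4)^{-1/4}$, while for the exponential factor one computes
\begin{align*}
\RE\frac{\ic t a^2}{1 - 2\ic t \sigma^2} = \frac{-2 t^2 \sigma^2 a^2}{1 + 4 t^2 \sigma^4} \leq 0,
\end{align*}
so that its modulus is at most $1$. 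With $\sigma^2 = \lambda_k^Z$ and $a = Y_k$ this yields the pointwise bound $|\E[\exp(\ic t (Z_k + Y_k)^2) \mid Y]| \leq (1 + 4 t^2 (\lambda_k^Z)^2)^{-1/4}$.

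Multiplying these bounds over $k$ shows that the conditional characteristic function is dominated, uniformly in $Y$, by $\prod_{k}(1 + 4 t^2 (\lambda_k^Z)^2)^{-1/4}$. Finally I would take expectation over $Y$ and use $|\E[\,\cdot\,]| \leq \E[|\,\cdot\,|]$ together with the tower property to conclude
\begin{align*}
\bigl|\E\bigl[\exp(\ic t \|Z+Y\|_{\HH}^2)\bigr]\bigr| \leq \E\Bigl[\bigl|\E[\exp(\ic t \|Z+Y\|_{\HH}^2)\mid Y]\bigr|\Bigr] \leq \prod_{k=1}^{\infty}\bigl(1 + 4 t^2 (\lambda_k^Z)^2\bigr)^{-1/4}.
\end{align*}
The main obstacle is the one-dimensional complex Gaussian integral: one must justify the analytic continuation to purely imaginary $s$ and carefully track that the real part of the exponent is non-positive, which is exactly what makes the bound insensitive to the shift $Y$. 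The interchange of the infinite product with the conditional expectation is a routine dominated-convergence point given the uniform modulus-one bound on the integrand.
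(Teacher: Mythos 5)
Your proof is correct. Note that the paper itself does not prove Lemma \ref{lem_gauss_char} at all: it imports it from the literature (citing Bentkus, Ulyanov and Yurinskii), and your argument --- diagonalizing ${\bf C}^Z$ in its eigenbasis, conditioning on the independent shift $Y$, invoking the one-dimensional noncentral identity $\E\bigl[\exp(\ic t (W+a)^2)\bigr] = (1-2\ic t \sigma^2)^{-1/2}\exp\bigl(\ic t a^2/(1-2\ic t\sigma^2)\bigr)$ whose exponent has nonpositive real part, and finishing with the tower property --- is precisely the standard proof found in those references. The technical points you flag are handled correctly: the continuation to $s = \ic t$ follows from analyticity of both sides on $\{\RE s < 1/(2\sigma^2)\}$ plus continuity up to the boundary, the factorization of the infinite product is legitimate by the a.s.\ finiteness of $\|Z+Y\|_{\HH}^2$ and the uniform modulus-one bound, and your computation $\RE\bigl(\ic t a^2/(1-2\ic t\sigma^2)\bigr) = -2t^2\sigma^2 a^2/(1+4t^2\sigma^4) \leq 0$ is exactly what makes the final bound uniform in $Y$, which is the point of the lemma.
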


For the next lemma, we assume that $\bigl\{Y_j\bigr\}_{j \in \N} \in \HH$ is an independent sequence. For $2 \leq p \leq 3$, we introduce the quantities
\begin{align*}
\mathfrak{M}_{p,j} &= \E\bigl[\bigl\|Y_j \ind_{\|Y_j\|_{\HH} > 1}\bigr\|_{\HH}^p\bigr], \quad \mathfrak{M}_p = \sum_{j \in \N} \mathfrak{M}_{p,j},\\
\mathfrak{L}_{p,j} &= \E\bigl[\bigl\|Y_j \ind_{\|Y_j\|_{\HH} \leq 1}\bigr\|_{\HH}^p\bigr], \quad \mathfrak{L}_p = \sum_{j \in \N} \mathfrak{L}_{p,j}.
\end{align*}
Let ${\bf \Lambda}_Y$ be the covariance operator of $S\bigl(Y\bigr) = \sum_{j \in \N} Y_j$. The following result is an adapted version of Theorem 1 in ~\cite{ulyanov_1986}.

\begin{lem}\label{lem_ulyanov}
Assume that the first thirteen eigenvalues of ${\bf \Lambda}_Y$ are strictly positive. Then for any $2 < p \leq 3$ and $a \in \HH$, we have
\begin{align*}
\sup_{x \in \R}\biggl|P\biggl(\bigr\|S(Y) - a\bigr\|_{\HH} \leq x \biggr) - P\biggl(\bigr\|Z - a\bigr\|_{\HH} \leq x \biggr) \biggr| \lesssim \bigl(1 + \bigl\|a\bigr\|_{\HH}^p\bigr)\bigl(\mathfrak{M}_2 + \mathfrak{L}_p\bigr),
\end{align*}
where $Z \in \HH$ is a Gaussian random variable with covariance operator ${\bf \Lambda}_Y$.
\end{lem}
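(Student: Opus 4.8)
The plan is to obtain the bound as an adaptation of Theorem~1 in \cite{ulyanov_1986}, whose proof rests on the Fourier (characteristic function) method applied to the nonnegative real random variables $\|S(Y) - a\|_{\HH}$ and $\|Z - a\|_{\HH}$. Since $x \mapsto x^2$ is monotone on $[0,\infty)$, the Kolmogorov distance is unchanged if we pass to the squared norms, so it suffices to compare the laws of $\|S(Y) - a\|_{\HH}^2$ and $\|Z - a\|_{\HH}^2$. The first step is an Esseen-type smoothing inequality: writing $F(x) = P(\|S(Y) - a\|_{\HH} \le x)$ and $G(x) = P(\|Z - a\|_{\HH} \le x)$, one bounds $\sup_x |F(x) - G(x)|$ by a constant times $\int_{|t| \le T} |\psi_S(t) - \psi_Z(t)|\,\frac{dt}{|t|} + \frac{1}{T}\sup_x G'(x)$, where $\psi_S(t) = \E[\exp(\ic t \|S(Y) - a\|_{\HH}^2)]$ and $\psi_Z(t) = \E[\exp(\ic t \|Z - a\|_{\HH}^2)]$. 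The smoothness input $\sup_x G'(x) < \infty$ is supplied precisely by the hypothesis that the first thirteen eigenvalues of ${\bf \Lambda}_Y$ are positive: this realizes $\|Z - a\|_{\HH}^2$ as a sum of sufficiently many independent scaled (non-central) squared Gaussians, whose density is bounded and smooth.

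Second, I would install a Lindeberg-type truncation to interpolate between second- and third-moment conditions, which is exactly what produces the quantity $\mathfrak{M}_2 + \mathfrak{L}_p$. Split $Y_j = Y_j' + Y_j''$ with $Y_j' = Y_j \ind_{\|Y_j\|_{\HH} \le 1}$ and $Y_j'' = Y_j \ind_{\|Y_j\|_{\HH} > 1}$. For the bounded part one has $\|Y_j'\|_{\HH}^3 \le \|Y_j'\|_{\HH}^p$ because $p \le 3$, so every third-order remainder is dominated by $\mathfrak{L}_p$; the unbounded part $Y_j''$ is removed at the cost of a single second-order term controlled by $\mathfrak{M}_2$. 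This is the mechanism by which a bound valid under only a $p$-th moment assumption ($2 < p \le 3$) is recovered, in place of the usual full third moment.

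Third, I would estimate the characteristic-function difference by splitting the time integral into a central region and a tail. On the central region ($|t|$ small) perform the Gaussian replacement summand by summand, replacing each $Y_j$ by its Gaussian analogue and expanding to third order; by the truncation above the accumulated errors amount to $\mathfrak{M}_2 + \mathfrak{L}_p$, while the shift by $a$, through $\|S(Y) - a\|_{\HH}^2 = \|S(Y)\|_{\HH}^2 - 2\langle S(Y), a\rangle + \|a\|_{\HH}^2$, contributes the polynomial factor $(1 + \|a\|_{\HH}^p)$. On the tail region ($|t|$ large) I would invoke Lemma~\ref{lem_gauss_char} with covariance ${\bf \Lambda}_Y$ and the deterministic shift $-a$: positivity of the first thirteen eigenvalues $\lambda_1, \ldots, \lambda_{13}$ yields $|\psi_Z(t)| \le \prod_{k=1}^{13}(1 + 4t^2\lambda_k^2)^{-1/4} \lesssim |t|^{-13/4}$, which is integrable against $dt/|t|$ on $\{|t| > T_0\}$; the cutoff $T$ is then chosen to balance the two contributions (or sent to infinity, using the integrability of $|\psi_Z|$ at infinity so that the smoothing term $T^{-1}\sup_x G'(x)$ becomes negligible).

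The main obstacle is the tail estimate for the \emph{non-Gaussian} factor $\psi_S$, since there is no direct analogue of Lemma~\ref{lem_gauss_char} for $S(Y)$. This is where the non-degeneracy is indispensable: after truncation the small parts have norm at most $1$, and isolating a fixed finite-dimensional non-degenerate projection (whose covariance remains bounded away from zero by Lemma~\ref{lem_eigen_gen_upper_bound}, so that the truncation does not destroy non-degeneracy) lets one bound $|\psi_S|$ through a conditional, essentially multivariate oscillatory estimate, with enough decay to render its tail integral negligible. The thirteen positive eigenvalues thus serve simultaneously to guarantee the decay of $\psi_Z$, the smoothness of $G$, and the stability of the comparison under truncation. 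The remaining work is bookkeeping: tracking the truncation so that the final constant-free bound reads precisely $(1 + \|a\|_{\HH}^p)(\mathfrak{M}_2 + \mathfrak{L}_p)$ rather than a cruder third-moment expression.
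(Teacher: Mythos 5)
You should first be aware that the paper contains no proof of this lemma at all: it is introduced verbatim as ``an adapted version of Theorem 1 in \cite{ulyanov_1986}'' and used as an imported black box, so your proposal can only be compared with the argument in the cited literature. Measured against that, your skeleton is the right one and matches the known Fourier-analytic architecture: passing to squared norms, an Esseen-type smoothing inequality, truncation of each summand at level $1$ (which is exactly what manufactures $\mathfrak{M}_2 + \mathfrak{L}_p$ in place of a full third moment), a Lindeberg-type Gaussian replacement with third-order expansion in the central frequency zone, and Lemma \ref{lem_gauss_char} together with the positivity of $\lambda_1,\dots,\lambda_{13}$ to control the Gaussian characteristic function $\psi_Z$ and the smoothness of the limiting distribution function.

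The genuine gap is the one you yourself flag and then dispose of in a few lines: the large-$|t|$ bound on the \emph{non-Gaussian} characteristic function $\psi_S(t) = \E\bigl[\exp\bigl(\ic t \|S(Y)-a\|_{\HH}^2\bigr)\bigr]$. This is not bookkeeping; it is where essentially all of the difficulty in \cite{ulyanov_1986} (and in the related work of Yurinskii, Bentkus, G\"otze) is concentrated. For a sum of independent variables with only $p$-th moments, $2 < p \le 3$, no soft conditioning or ``finite-dimensional non-degenerate projection'' argument yields integrable decay of $|\psi_S|$; the actual proofs require symmetrization and Weyl-type inequalities for characteristic functions of quadratic forms, replacement of blocks of summands by Gaussian blocks so that the conditional Gaussian decay of Lemma \ref{lem_gauss_char} can be borrowed, and a careful matching of the resulting decay exponent with the smoothing cutoff $T$. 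Your proposal silently assumes this machinery works (``with enough decay to render its tail integral negligible''), and your explanation of the number thirteen is also incomplete: $|t|^{-13/4}$ integrability of $\psi_Z$ against $dt/|t|$ would already follow from far fewer positive eigenvalues, and indeed the paper's own proof of Lemma \ref{lem_gauss_hoelder_inf} remarks that only $\lambda_3 > 0$ is needed for that particular integral; the count $13$ is calibrated to the non-Gaussian tail estimate and the $a$-dependence in Ulyanov's theorem, i.e.\ precisely to the step your sketch leaves open. So: correct strategy and correct identification of where the moment quantities and the shift factor $(1+\|a\|_{\HH}^p)$ arise, but the decisive analytic step is missing, and it cannot be filled by the outline you give; the paper sidesteps it entirely by citation.
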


\subsection{Proofs of Section \ref{sec_main}}\label{sec_proof_lin}

We first state and prove the following auxiliary result.

\begin{thm}\label{thm_berry_esseen}
Grant Assumption \ref{ass_main} and let $\mu \in \HH$ with $\|\mu\|_{\HH} < \infty$. Then
\begin{align*}
&\sup_{x \in \R}\bigl|P\bigl(\|n^{-1/2}S_n(X) + \mu\|_{\HH} \leq x \bigr) - P\bigl(\|Z_{\bf \Lambda_n} + \mu\|_{\HH} \leq x \bigr) \bigr| \lesssim n^{-\frac{p}{2} + 1} \bigl(1 + \|\mu\|_{\HH}^{p}\bigr)\E\bigl[\|\epsilon_0\|_{\HH}^p\bigr],
\end{align*}
for an appropriate covariance operator ${\bf \Lambda}_n(\cdot)$. The constant in $\lesssim$ only depends on $\sum_{j \in \Z}\|\alpha_j\|_{\HHS}$ and $\min_{1 \leq j \leq 13}\lambda_j$.
\end{thm}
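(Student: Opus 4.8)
The plan is to realise $n^{-1/2}S_n(X)$ as a sum of independent $\HH$-valued summands via the Beveridge-Nelson decomposition and then to invoke Lemma \ref{lem_ulyanov} directly. Reindexing $\sum_{k=1}^n X_k$ by the index of the innovation shows that each $\epsilon_m$ enters with the operator ${\mathbf A}_{n,m} = \sum_{j=-n+m}^{m-1}\alpha_j$, so that, setting
\begin{align*}
Y_m = n^{-1/2}{\mathbf A}_{n,m}(\epsilon_m), \qquad m \in \Z,
\end{align*}
we obtain $n^{-1/2}S_n(X) = \sum_{m \in \Z} Y_m$, an independent (non-identically distributed) family which, after relabelling $\Z$ by $\N$, is of the type treated in Lemma \ref{lem_ulyanov}. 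I would take ${\mathbf \Lambda}_n$ to be the covariance operator of this sum, namely ${\mathbf \Lambda}_n = n^{-1}\sum_{m \in \Z}{\mathbf A}_{n,m}{\mathbf C}^{\epsilon}{\mathbf A}_{n,m}^*$, and apply the lemma with $a = -\mu$, so that $\|a\|_{\HH} = \|\mu\|_{\HH}$ and $Z_{{\mathbf \Lambda}_n}$ is precisely the Gaussian variable appearing in its statement.

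Before applying the lemma I must verify its hypothesis, namely that the first thirteen eigenvalues $\lambda_{j,n}$ of ${\mathbf \Lambda}_n$ are strictly positive. For this I would bound $\|{\mathbf \Lambda}_n - {\mathbf \Lambda}\| \lesssim \mathfrak{A}_n = n^{-1}\sum_{j \in \Z}(|j|\wedge n)\|\alpha_j\|_{\HHS}$, which tends to $0$ by dominated convergence under Assumption \ref{ass_main}(ii). Combined with Lemma \ref{lem_eigen_gen_upper_bound} and Assumption \ref{ass_main}(iii), this forces $\lambda_{j,n} \geq \tfrac12\lambda_j > 0$ for $1 \leq j \leq 13$ as soon as $n$ exceeds some threshold $N$ depending only on $\min_{1 \leq j \leq 13}\lambda_j$ and $\{\alpha_j\}_{j \in \Z}$. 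For the finitely many $n \leq N$ the quantity on the left-hand side of the asserted bound is trivially at most $1 \leq N^{p/2-1}\,n^{-p/2+1}$, so these cases are absorbed into the constant.

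The heart of the estimate is the control of $\mathfrak{M}_2 + \mathfrak{L}_p$ for the family $\{Y_m\}$. Since $p > 2$, on $\{\|Y_m\|_{\HH} > 1\}$ one has $\|Y_m\|_{\HH}^2 \leq \|Y_m\|_{\HH}^p$, so the two truncated pieces combine into a single unconditional $p$-th moment:
\begin{align*}
\mathfrak{M}_2 + \mathfrak{L}_p \leq \sum_{m \in \Z}\E\bigl[\|Y_m\|_{\HH}^p\bigr] = n^{-p/2}\sum_{m \in \Z}\E\bigl[\|{\mathbf A}_{n,m}(\epsilon_m)\|_{\HH}^p\bigr] \leq n^{-p/2}\,\E\bigl[\|\epsilon_0\|_{\HH}^p\bigr]\sum_{m \in \Z}\|{\mathbf A}_{n,m}\|_{\HHS}^p.
\end{align*}
Writing $C = \sum_{j \in \Z}\|\alpha_j\|_{\HHS} < \infty$ and using $\|{\mathbf A}_{n,m}\|_{\HHS} \leq C$ together with $p \geq 2 > 1$ gives $\|{\mathbf A}_{n,m}\|_{\HHS}^p \leq C^{p-1}\|{\mathbf A}_{n,m}\|_{\HHS}$, while the double-counting identity
\begin{align*}
\sum_{m \in \Z}\|{\mathbf A}_{n,m}\|_{\HHS} \leq \sum_{m \in \Z}\sum_{j = m-n}^{m-1}\|\alpha_j\|_{\HHS} = n\sum_{j \in \Z}\|\alpha_j\|_{\HHS} = nC
\end{align*}
(each $\alpha_j$ appears for exactly $n$ indices $m$) yields $\sum_{m}\|{\mathbf A}_{n,m}\|_{\HHS}^p \leq nC^p$. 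Substituting back gives $\mathfrak{M}_2 + \mathfrak{L}_p \lesssim n^{-p/2+1}\E[\|\epsilon_0\|_{\HH}^p]$, and Lemma \ref{lem_ulyanov} with $a = -\mu$ then produces the claimed bound $(1 + \|\mu\|_{\HH}^p)\,n^{-p/2+1}\E[\|\epsilon_0\|_{\HH}^p]$, with the constant inherited from Lemma \ref{lem_ulyanov} (depending only on $\min_{1 \leq j \leq 13}\lambda_j$) and from $C$.

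I expect the only genuine obstacle to be the eigenvalue bookkeeping of the second paragraph: quantifying $\|{\mathbf \Lambda}_n - {\mathbf \Lambda}\|$ well enough to guarantee non-degeneracy of ${\mathbf \Lambda}_n$ uniformly for large $n$, and dispatching the small-$n$ regime separately. Once the $p > 2$ truncation trick and the double-counting identity are in hand, the moment computation is routine.
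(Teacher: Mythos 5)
Your proof is correct and rests on the same two pillars as the paper's: the Beveridge--Nelson reindexing by innovations (your $Y_m = n^{-1/2}{\mathbf A}_{n,m}(\epsilon_m)$ is exactly the paper's $\sqrt{n}U_k = {\mathbf A}_{n,k}(\epsilon_k)$), Lemma \ref{lem_ulyanov} applied with $a = -\mu$, and eigenvalue stability of ${\bf \Lambda}_n$ via Lemma \ref{lem_eigen_gen_upper_bound} (your quantitative bound $\|{\bf \Lambda}_n - {\bf \Lambda}\|_{\HHS} \lesssim \mathfrak{A}_n$ is precisely the content of the paper's Lemma \ref{lem_approx_cov_operators_II}, proved there for Theorem \ref{thm_berry_esseen_II}; for the present theorem the paper makes do with the qualitative $\oo(1)$ statement of Lemma \ref{lem_l_eigenvalue_positive}). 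Where you genuinely diverge is the treatment of the boundary terms $m \notin \{1,\ldots,n\}$: the paper lumps them into a single summand $T_{n+1} = \sum_{k > n} U_k$ and controls $\E[\|T_{n+1}\|_{\HH}^p]$ via a Hilbert-space Rosenthal inequality (Lemma \ref{lem_control_Lambda_n+1}), whereas you keep every innovation as its own independent summand and bound $\sum_{m \in \Z}\E[\|Y_m\|_{\HH}^p]$ directly through the double-counting identity $\sum_m \|{\mathbf A}_{n,m}\|_{\HHS} \leq nC$ combined with $\|{\mathbf A}_{n,m}\|_{\HHS}^p \leq C^{p-1}\|{\mathbf A}_{n,m}\|_{\HHS}$. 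Your route is more elementary (no Rosenthal inequality at all, main and tail terms handled uniformly, explicit constant $C^p$); what the paper's blocking buys is reusability, since Lemma \ref{lem_control_Lambda_n+1} does double duty later in bounding ${\bf IV}_n$, ${\bf V}_n$ and the covariance perturbation in the proof of Theorem \ref{thm_berry_esseen_II}. One loose end in your small-$n$ absorption: to dominate the trivial bound $1$ by $n^{-p/2+1}\E[\|\epsilon_0\|_{\HH}^p]$ you need $\E[\|\epsilon_0\|_{\HH}^p]$ bounded below by a constant of the allowed type, which does follow since $0 < \lambda_1 \leq \tr({\bf \Lambda}) \leq C^2\,\E\bigl[\|\epsilon_0\|_{\HH}^p\bigr]^{2/p}$ by Jensen, so the absorption is legitimate given that the constant may depend on $C$ and $\min_{1 \leq j \leq 13}\lambda_j$ (note that the threshold $N$ depends on the whole sequence $\{\alpha_j\}$ through the decay of $\mathfrak{A}_n$, but the paper's $n_0$ in Lemma \ref{lem_l_eigenvalue_positive} has the same dependence, so you are held to no stricter a standard).
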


Theorem \ref{thm_berry_esseen} gives the optimal rates under sharp dependence condition Assumption \ref{ass_main} (ii). Note that here the underlying covariance operator ${\bf \Lambda}_n$ depends on $n$ (see the proof for the precise construction of ${\bf \Lambda}_n$). Based on this result, we then obtain Theorem \ref{thm_berry_esseen_II} based on the comparison Lemma \ref{lem_gauss_hoelder_inf} for Hilbert space valued Gaussian random variables.

\begin{proof}[Proof of Theorem \ref{thm_berry_esseen}]
For $1 \leq k \leq n$, put $\sqrt{n}U_k = {\mathbf A}_{n,k}(\epsilon_k)$ and $\sqrt{n}U_k = {\mathbf A}_{n,k}(\epsilon_k) + {\mathbf A}_{n,-k+n}(\epsilon_{-k+n})$ for $n + 1 \leq k \leq \infty$. We then use the abbreviations $T_k = U_k$ for $1 \leq k \leq n$ and $T_{n+1} = \sum_{k = n+1}^{\infty} U_k$. Moreover, we put $S_{1}^n(U) = \sum_{k = 1}^n U_k$ and $S_{1}^n(T) = \sum_{k = 1}^n T_k$. Then
\begin{align}\label{eq_decomp}
n^{-1/2}\sum_{k = 1}^n X_k = S_1^{n+1}(T).
\end{align}
In addition, we denote with $S_1^n(Z)$ the Gaussian counter parts, that is, every $\epsilon_i$ is replaced with $\xi_i$ at the corresponding places. For $x \in \HH$ denote with
\begin{align*}
{\mathbf \Lambda}_{n}^{(0)}(x) &= n^{-1} \sum_{k = 1}^n \E\bigl[\langle X_k,x \rangle X_k \bigr],\\
{\mathbf \Lambda}_{n,k}(x) &= {\mathbf A}_{n,k} \mathbf{C}^{\epsilon} {\mathbf A}_{n,k}^*(x),\\
{\mathbf \Lambda}_{n}^{(1)}(x) &= n^{-1} \sum_{k = 1}^n {\mathbf A}_{n,k} \mathbf{C}^{\epsilon} {\mathbf A}_{n,k}^*,
\end{align*}
and with $\bigl\{\lambda_{n,j}\bigr\}_{j \in \N}$ the eigenvalues of the covariance operator ${\mathbf \Lambda}_{n}^{(0)}$. The proof of Theorem \ref{thm_berry_esseen} requires the following lemmas.

\begin{lem}\label{lem_control_Lambda_n+1}
Grant Assumption \ref{ass_main}. Then for any $2 \leq p \leq 3$ we have
\begin{align*}
&\E\bigl[\|T_{n+1}\|_{\HH}^p\bigr] = \oo\bigl(n^{-\frac{p}{2} + 1}\E\bigl[\|\epsilon_0\|_{\HH}^p\bigr] \bigr) \quad \text{and}\\
&\bigl\|{\bf \Lambda}_n^{(0)} - {\bf \Lambda}_{n}^{(1)} \bigr\|_{\HHS} = \oo\bigl(1\bigr).
\end{align*}
\end{lem}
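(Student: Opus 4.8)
The plan is to treat the two assertions separately and to derive the covariance estimate as a consequence of the $p=2$ instance of the moment bound.

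\emph{The moment bound (first assertion).} The key structural observation is that the summands constituting $T_{n+1}=\sum_{k>n}U_k$ are built from pairwise disjoint innovations: for $k>n$ the vector $\sqrt{n}\,U_k$ involves only $\epsilon_k$ (with index $\geq n+1$) and $\epsilon_{-k+n}$ (with index $\leq -1$), so $\{U_k\}_{k>n}$ is a sequence of independent, centred $\HH$-valued random variables. I would therefore apply a Rosenthal-type moment inequality for sums of independent centred Hilbert-space valued variables, which for $2<p\le 3$ yields
\begin{align*}
\E\bigl[\|T_{n+1}\|_{\HH}^p\bigr]\lesssim\Bigl(\sum_{k>n}\E\bigl[\|U_k\|_{\HH}^2\bigr]\Bigr)^{p/2}+\sum_{k>n}\E\bigl[\|U_k\|_{\HH}^p\bigr],
\end{align*}
estimating each summand through the operator norm via $\E[\|{\mathbf A}_{n,k}(\epsilon_k)\|_{\HH}^q]\le\|{\mathbf A}_{n,k}\|_{\HHS}^q\,\E[\|\epsilon_0\|_{\HH}^q]$, and likewise for the folded term ${\mathbf A}_{n,-k+n}$.

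\emph{Summing the window norms.} The device that renders the tail sums tractable is a sliding-window identity: from $\|{\mathbf A}_{n,k}\|_{\HHS}\le\sum_{j=k-n}^{k-1}\|\alpha_j\|_{\HHS}$ and an interchange of summation one obtains
\begin{align*}
\sum_{k>n}\|{\mathbf A}_{n,k}\|_{\HHS}\le\sum_{j\ge 1}(j\wedge n)\|\alpha_j\|_{\HHS},
\end{align*}
and symmetrically for the negative indices, so the second-moment sum is bounded by $\mathfrak{A}_n\,\E[\|\epsilon_0\|_{\HH}^2]$ with $\mathfrak{A}_n=n^{-1}\sum_{j\in\Z}(|j|\wedge n)\|\alpha_j\|_{\HHS}$. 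Since $(|j|\wedge n)/n\to 0$ pointwise and is dominated by $1$ while $\sum_j\|\alpha_j\|_{\HHS}<\infty$, dominated convergence gives $\mathfrak{A}_n\to 0$; this already settles $p=2$, namely $\E[\|T_{n+1}\|_{\HH}^2]=\oo(\E[\|\epsilon_0\|_{\HH}^2])$. For the Lyapunov-type remainder I would use $\|{\mathbf A}_{n,k}\|_{\HHS}^p\le(\sum_j\|\alpha_j\|_{\HHS})^{p-1}\|{\mathbf A}_{n,k}\|_{\HHS}$ and sum as above, which produces $\sum_{k>n}\E[\|U_k\|_{\HH}^p]\lesssim n^{-p/2+1}\mathfrak{A}_n\,\E[\|\epsilon_0\|_{\HH}^p]=\oo(n^{-p/2+1}\E[\|\epsilon_0\|_{\HH}^p])$.

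\emph{The covariance estimate (second assertion).} Here I would exploit that $n^{-1/2}S_n(X)$ splits as the sum of the two independent, centred pieces $S_1^n(T)$ and $T_{n+1}$, so the covariance operator of the whole is the sum of the two covariance operators; since ${\mathbf \Lambda}_n^{(1)}$ is exactly $\Cov\bigl(S_1^n(T)\bigr)$, the difference ${\mathbf \Lambda}_n^{(0)}-{\mathbf \Lambda}_n^{(1)}$ equals $\Cov\bigl(T_{n+1}\bigr)$. Being positive semidefinite, this difference has Hilbert–Schmidt norm dominated by its trace, and $\tr\,\Cov(T_{n+1})=\E[\|T_{n+1}\|_{\HH}^2]=\oo(1)$ by the case $p=2$ just established.

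\emph{Main obstacle.} The delicate point is the first (variance) term of the Rosenthal bound when $p>2$. The naive estimate only gives $(\E[\|T_{n+1}\|_{\HH}^2])^{p/2}\lesssim\mathfrak{A}_n^{p/2}\,\E[\|\epsilon_0\|_{\HH}^p]$, and since $\mathfrak{A}_n$ may tend to zero arbitrarily slowly (Corollary \ref{cor_bad_rate}), this need not be $\oo(n^{-p/2+1})$. The crux is therefore to control the tail variance $\sum_{k>n}\E[\|U_k\|_{\HH}^2]$ sharply enough that its $(p/2)$-th power is $\oo(n^{-p/2+1})$ — that is, to exploit the genuine square-summability of the sliding-window norms $\sum_{k>n}\|{\mathbf A}_{n,k}\|_{\HHS}^2$, rather than the lossy reduction $\|{\mathbf A}_{n,k}\|_{\HHS}^2\le(\sum_j\|\alpha_j\|_{\HHS})\,\|{\mathbf A}_{n,k}\|_{\HHS}$ used for the $p=2$ case. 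This is where I expect the real work to lie.
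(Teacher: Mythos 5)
Your proposal follows the paper's own proof essentially step for step: there too $\{U_k\}_{k>n}$ is treated as an independent centred sequence, the Hilbert-space Rosenthal inequality (Osekowski) is applied, the summands are controlled through the window sums $a_{n,i}^{\pm}=\sum_{j=1+i}^{n+i}\|\alpha_{\pm j}\|_{\HHS}$ and an interchange of summation (the paper's splitting at $K_n\to\infty$, $K_n=\oo(n)$, leading to \eqref{eq_ui_bound_prelim}, is the discrete form of your dominated-convergence step), and the second assertion is derived exactly as you derive it, from ${\bf \Lambda}_n^{(0)}-{\bf \Lambda}_{n}^{(1)}=\E\bigl[\langle T_{n+1},\cdot\rangle T_{n+1}\bigr]$ by independence, with the operator norm bounded by the trace $\E\bigl[\|T_{n+1}\|_{\HH}^2\bigr]=\oo(1)$. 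Your treatment of the second assertion, of the case $p=2$, and of the Lyapunov term is complete and correct.

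The obstacle you flag at the end is, however, not a gap that the paper closes: it is precisely the point at which the paper's proof is deficient as well. In \eqref{eq_vn_bound_prelim_1} the paper inserts the bound $\sum_{i>n}\E\bigl[\|\sqrt{n}U_i\|_{\HH}^2\bigr]=\oo\bigl(n\,\E[\|\epsilon_0\|_{\HH}^2]\bigr)$ into the quadratic Rosenthal term and writes the conclusion $\oo\bigl(n^{-p/2+1}\E[\|\epsilon_0\|_{\HH}^p]\bigr)$, but for $p>2$ that term only yields $n^{-p/2}\,\oo(n)^{p/2}=\oo(1)$, and no sharper estimate is supplied. Your suspicion that none exists in general is correct: take $\alpha_j=j^{-1-\eta}\,\mathrm{Id}$ for $j\geq1$ and $\alpha_j=0$ otherwise, with $0<\eta<\tfrac{1}{2}-\tfrac{1}{p}$, and Gaussian innovations whose covariance operator has at least thirteen positive eigenvalues; then Assumption \ref{ass_main} holds, ${\bf A}_{n,n+l}\asymp l^{-\eta}$ for $1\leq l\leq n$, hence $\E\bigl[\|T_{n+1}\|_{\HH}^2\bigr]\asymp n^{-2\eta}$ and, by Jensen, $\E\bigl[\|T_{n+1}\|_{\HH}^p\bigr]\gtrsim n^{-p\eta}$, which is not $\oo(n^{-p/2+1})$. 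So the first assertion of the lemma is false as stated for $2<p\leq3$; what your argument (and the paper's) actually proves is $\E\bigl[\|T_{n+1}\|_{\HH}^p\bigr]\lesssim\bigl(\mathfrak{A}_n^{p/2}+n^{-p/2+1}\mathfrak{A}_n\bigr)\E\bigl[\|\epsilon_0\|_{\HH}^p\bigr]$. This weaker bound still gives the second assertion, and since $\mathfrak{A}_n^{p/2}\lesssim\mathfrak{A}_n$ (as $\mathfrak{A}_n$ is bounded and $p/2>1$) it is harmless for Theorem \ref{thm_berry_esseen_II}, whose bound contains the term $\mathfrak{A}_n\,\E[\|\epsilon_0\|_{\HH}^p]$ anyway; but it does break the proof of the intermediate Theorem \ref{thm_berry_esseen}, where the lemma is invoked in \eqref{eq_I_n_2} and \eqref{eq_II_n_1}. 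The natural repair is to avoid lumping the tail into the single summand $T_{n+1}$ altogether: applying Lemma \ref{lem_ulyanov} to the ungrouped independent array $\{U_k\}_{k\geq1}$, whose error bound $\mathfrak{M}_2+\mathfrak{L}_p$ contains no variance term, gives $\sum_{k}\E\bigl[\|U_k\|_{\HH}^p\bigr]\lesssim n^{-p/2+1}\E\bigl[\|\epsilon_0\|_{\HH}^p\bigr]$ directly and rescues the theorem. In short, you reproduced the paper's argument, and the one step you could not close is a genuine flaw in the paper's lemma, not in your attempt.
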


\begin{proof}[Proof of Lemma \ref{lem_control_Lambda_n+1}]
Due to the triangle inequality, it follows that for $n + 1 \leq i \leq \infty$ and $p \geq 1$ we have
\begin{align}\nonumber \label{eq_ui_bound_prelim_0}
\E\bigl[\bigl\|\sqrt{n}U_i\bigr\|_{\HH}^p\bigr] &\leq  \Bigl(\sum_{j = 1 - i}^{n-i} \bigl\|\alpha_j\bigr\|_{\HHS} + \sum_{j = 1 + i}^{n+i} \bigl\|\alpha_j\bigr\|_{\HHS}\Bigr)^p\E\bigl[\|\epsilon_0\|_{\HH}^p\bigr] \\&\lesssim \Bigl(\sum_{j = 1 + i}^{n+i} \bigl\|\alpha_{-j}\bigr\|_{\HHS} + \sum_{j = 1 + i}^{n+i} \bigl\|\alpha_j\bigr\|_{\HHS}\Bigr)^p \E\bigl[\|\epsilon_0\|_{\HH}^p\bigr].
\end{align}
Denote with $a_{n,i}^+ = \sum_{j = 1 + i}^{n+i}\bigl\|\alpha_j\bigr\|_{\HHS}$ and $a_{n,i}^- = \sum_{j = 1 + i}^{n+i}\bigl\|\alpha_{-j}\bigr\|_{\HHS}$. Note that since $\sum_{j \in \Z}\bigl\|\alpha_j\bigr\|_{\HHS} < \infty$ we have
\begin{align}\label{eq_ai_bound_prelim_1}
a_{n,i}^+, a_{n,i}^- \to 0 \quad \text{as $i \to \infty$, uniformly in $n$.}
\end{align}
Then by \eqref{eq_ui_bound_prelim_0}, it follows that for $K \in \N$
\begin{align*}
\sum_{i = n+1}^{\infty} \E\bigl[\|\sqrt{n}U_i\|_{\HH}^p\bigr]/\E\bigl[\|\epsilon_0\|_{\HH}^p\bigr] &\lesssim \sum_{i = 1}^{K}(a_{n,i}^+ + a_{n,i}^-)^p + \sum_{i > K} (a_{n,i}^+ + a_{n,i}^-)^p \\&\lesssim 2^p K \Bigl(\sum_{i \in \Z}\|\alpha_i\|_{\HHS} \Bigr)^p + \sum_{|i| > K} (n \wedge i) \|\alpha_i\|_{\HHS}.
\end{align*}
Selecting $K = K_n \to \infty$ such that $K_n = \oo\bigl(n)$, we deduce that
\begin{align}\label{eq_ui_bound_prelim}
\sum_{i = n+1}^{\infty} \E\bigl[\|\sqrt{n}U_i\|_{\HH}^p\bigr] = \oo\bigl(n \E\bigl[\|\epsilon_0\|_{\HH}^p\bigr]\bigr).
\end{align}
Using a Rosenthal inequality for Hilbert spaces (cf. ~\cite{osekowski_2012}), we get for $p  \geq 2$
\begin{align}\label{eq_vn_bound_prelim_1} \nonumber
\E\bigl[\|T_{n+1}\|_{\HH}^p\bigr] &\lesssim n^{-\frac{p}{2}} \left\{\Bigl(\sum_{i = n+1}^{\infty} \E\bigl[\|\sqrt{n}U_i\|_{\HH}^2\bigr] \Bigr)^{\frac{p}{2}} + \sum_{i = n+1}^{\infty} \E\bigl[\|\sqrt{n}U_i\|_{\HH}^p\bigr] \right\} \\&= \oo\bigl(n^{-\frac{p}{2} + 1}\E\bigl[\|\epsilon_0\|_{\HH}^p\bigr] \bigr).
\end{align}
This gives the first claim. Next, denote with $\lambda_{j}^{(T)}$ and $e_j^{(T)}$ the eigenvalues and functions of the Covariance operator of $T_{n+1}$, which exists due to \eqref{eq_vn_bound_prelim_1}. Since $T_{n+1}$ is independent of $\bigl\{T_j\bigr\}_{1 \leq j \leq n}$ by construction, we get that for any $x \in \HH$
\begin{align*}
{\bf \Lambda}_n^{(0)}(x) - {\bf \Lambda}_{n}^{(1)}(x) &= \E\bigl[\langle S_1^n(T), x\rangle T_{n+1} + \langle T_{n+1}, x\rangle S_{1}^n(T) + \langle T_{n+1}, x\rangle T_{n+1} \bigr] \\& = \E\bigl[\langle T_{n+1}, x\rangle T_{n+1} \bigr].
\end{align*}
It then follows from Cauchy-Schwarz and Parseval's identity that
\begin{align*}
\bigl\|{\bf \Lambda}_n^{(0)} - {\bf \Lambda}_{n}^{(1)} \bigr\|_{\HHS} \leq \bigl\|\E\bigl[\langle T_{n+1},\cdot \rangle T_{n+1} \bigr]\bigr\|_{\HHS} \leq \Big(\sum_{j = 1}^{\infty} \bigl(\lambda_{j}^{(T)}\bigr)^2\Big)^{1/2}.
\end{align*}
By the triangle inequality and from \eqref{eq_vn_bound_prelim_1}, we deduce that
\begin{align*}
\Big(\sum_{j = 1}^{\infty} \bigl(\lambda_{j}^{(T)}\bigr)^2\Big)^{1/2} \leq \sum_{j = 1}^{\infty}\lambda_{j}^{(T)} = \E\bigl[\|T_{n+1}\|_{\HH}^2\bigr] = \oo\bigl(1\bigr),
\end{align*}
and hence
\begin{align}
\bigl\|{\bf \Lambda}_n^{(0)} - {\bf \Lambda}_{n}^{(1)} \bigr\|_{\HHS} = \oo\bigl(1\bigr).
\end{align}
\end{proof}

\begin{lem}\label{lem_l_eigenvalue_positive}
Assume that Assumption \ref{ass_main} holds. Then there exists an $n_0 \in \N$ such that for $n \geq n_0$ we have $\lambda_{n,k} > 0$ for any fixed $k \in \N$ where $\lambda_k > 0$.
\end{lem}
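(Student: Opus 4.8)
The plan is to reduce everything to a single quantitative fact, namely that the covariance operator $\mathbf{\Lambda}_n^{(0)}$ converges to $\mathbf{\Lambda}$ in operator norm, and then to read off the eigenvalue statement from the perturbation bound of Lemma~\ref{lem_eigen_gen_upper_bound}. Both $\mathbf{\Lambda}_n^{(0)}$ and $\mathbf{\Lambda}$ are positive, self-adjoint and, under Assumption~\ref{ass_main}, trace class, so their decreasingly ordered eigenvalues coincide with their singular values. Lemma~\ref{lem_eigen_gen_upper_bound} then gives $\bigl|\lambda_{n,k}-\lambda_k\bigr|\leq\bigl\|\mathbf{\Lambda}_n^{(0)}-\mathbf{\Lambda}\bigr\|_{\HHS}$ for every $k$. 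Once the right-hand side is shown to vanish, fixing any $k$ with $\lambda_k>0$ forces $\lambda_{n,k}\geq\lambda_k-\|\mathbf{\Lambda}_n^{(0)}-\mathbf{\Lambda}\|_{\HHS}\geq\lambda_k/2>0$ for all $n$ beyond some $n_0=n_0(k)$, which is the assertion. Since the bound is uniform in $k$, a single $n_0$ in fact serves every $k$ with $\lambda_k$ above a fixed threshold, which is what the later use with the first thirteen eigenvalues needs.

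To obtain the norm convergence I would pass through the intermediate operator $\mathbf{\Lambda}_n^{(1)}$ and split $\|\mathbf{\Lambda}_n^{(0)}-\mathbf{\Lambda}\|_{\HHS}\leq\|\mathbf{\Lambda}_n^{(0)}-\mathbf{\Lambda}_n^{(1)}\|_{\HHS}+\|\mathbf{\Lambda}_n^{(1)}-\mathbf{\Lambda}\|_{\HHS}$. The first summand is already shown to be $\oo(1)$ in Lemma~\ref{lem_control_Lambda_n+1}, so only the second needs attention.

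For the second summand I would write $\mathbf{\Lambda}_n^{(1)}-\mathbf{\Lambda}=n^{-1}\sum_{k=1}^n\bigl(\mathbf{A}_{n,k}\mathbf{C}^{\epsilon}\mathbf{A}_{n,k}^*-\mathbf{A}\mathbf{C}^{\epsilon}\mathbf{A}^*\bigr)$ and telescope each term as $(\mathbf{A}_{n,k}-\mathbf{A})\mathbf{C}^{\epsilon}\mathbf{A}_{n,k}^*+\mathbf{A}\mathbf{C}^{\epsilon}(\mathbf{A}_{n,k}^*-\mathbf{A}^*)$. Submultiplicativity of the operator norm, the uniform bounds $\|\mathbf{A}_{n,k}\|_{\HHS},\|\mathbf{A}\|_{\HHS}\leq\sum_{j\in\Z}\|\alpha_j\|_{\HHS}$ and $\|\mathbf{C}^{\epsilon}\|_{\HHS}\leq\E[\|\epsilon_0\|_{\HH}^2]$, and the identity $\mathbf{A}_{n,k}-\mathbf{A}=-\mathbf{A}_{n,k}^c$ then bound $\|\mathbf{\Lambda}_n^{(1)}-\mathbf{\Lambda}\|_{\HHS}$ by a constant multiple of $n^{-1}\sum_{k=1}^n\|\mathbf{A}_{n,k}^c\|_{\HHS}$. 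From $\|\mathbf{A}_{n,k}^c\|_{\HHS}\leq\sum_{j\leq-n+k-1}\|\alpha_j\|_{\HHS}+\sum_{j\geq k}\|\alpha_j\|_{\HHS}$, counting for each $j$ the admissible $k\in\{1,\dots,n\}$ rearranges the double sum into $n^{-1}\sum_{k=1}^n\|\mathbf{A}_{n,k}^c\|_{\HHS}\leq n^{-1}\sum_{j\in\Z}(|j|\wedge n)\|\alpha_j\|_{\HHS}=\mathfrak{A}_n$, the quantity already appearing in Theorem~\ref{thm_berry_esseen_II}. Finally $\mathfrak{A}_n\to0$, since each summand $(|j|\wedge n)n^{-1}\|\alpha_j\|_{\HHS}$ tends to $0$ and is dominated by the summable $\|\alpha_j\|_{\HHS}$ (equivalently, split the sum at $|j|=\lfloor\sqrt{n}\rfloor$).

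The only genuine obstacle is this second summand. The naive move is to estimate each $\mathbf{A}_{n,k}$ against $\mathbf{A}$ separately, but $\|\mathbf{A}_{n,k}-\mathbf{A}\|_{\HHS}=\|\mathbf{A}_{n,k}^c\|_{\HHS}$ fails to be small for $k$ near the two ends of the block $\{1,\dots,n\}$, where one of the two tails making up $\mathbf{A}_{n,k}^c$ is a full tail of $\sum_{j}\alpha_j$ rather than a remote one. One must therefore average over $k$ before estimating; these boundary contributions, weighted by $n^{-1}$, then collapse---through the Beveridge--Nelson index bookkeeping---to $\mathfrak{A}_n$, and the rest is routine perturbation theory.
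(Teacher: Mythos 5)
Your proposal is correct and follows essentially the same route as the paper: the same eigenvalue-perturbation reduction via Lemma~\ref{lem_eigen_gen_upper_bound}, the same triangle-inequality passage through $\mathbf{\Lambda}_n^{(1)}$ with Lemma~\ref{lem_control_Lambda_n+1} disposing of $\bigl\|\mathbf{\Lambda}_n^{(0)}-\mathbf{\Lambda}_n^{(1)}\bigr\|_{\HHS}$, and the same telescoping of $\mathbf{A}_{n,k}\mathbf{C}^{\epsilon}\mathbf{A}_{n,k}^*-\mathbf{A}\mathbf{C}^{\epsilon}\mathbf{A}^*$. The only divergence is the final step: the paper argues softly, splitting the average over $k$ into a bulk $l\leq k\leq n-l$ where $\bigl\|\mathbf{\Lambda}_{n,k}-\mathbf{\Lambda}\bigr\|_{\HHS}$ is small and an $\oo(n)$ boundary contributing $2l/n\,\bigl\|\mathbf{\Lambda}\bigr\|_{\HHS}$, whereas you prove the quantitative bound $\bigl\|\mathbf{\Lambda}_n^{(1)}-\mathbf{\Lambda}\bigr\|_{\HHS}\lesssim \mathfrak{A}_n=n^{-1}\sum_{j\in\Z}(|j|\wedge n)\bigl\|\alpha_j\bigr\|_{\HHS}$ by counting admissible $k$ per index $j$, and then send $\mathfrak{A}_n\to 0$ by dominated convergence; this is a harmless strengthening that reproduces the estimate the paper derives separately in Lemma~\ref{lem_approx_cov_operators_II} for Theorem~\ref{thm_berry_esseen_II}, so your version yields an explicit rate where the lemma itself only needs $\oo(1)$.
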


\begin{proof}[Proof of Lemma \ref{lem_l_eigenvalue_positive}]
Note first that since $\bigl\|{\mathbf A}\bigr\|_{\HHS}, \bigl\|{\mathbf C}^{\epsilon}\bigr\|_{\HHS}, \bigl\|{\mathbf A}_{n,k}\bigr\|_{\HHS}<\infty$ for all $i \in \Z$ and $n \in \N$, ${\mathbf \Lambda}_{n,k}$ and ${\mathbf \Lambda}$ are bounded operators. By Lemma \ref{lem_eigen_gen_upper_bound} we have
\begin{align*}
\bigl|\lambda_{k} - \lambda_{n,k}\bigr| \leq \bigl\|{\mathbf \Lambda}-{\mathbf \Lambda}_{n}^{(1)}\bigr\|_{\HHS},
\end{align*}
hence it suffices to consider $\bigl\|{\mathbf \Lambda}-{\mathbf \Lambda}_{n}^{(1)}\bigr\|_{\HHS}$. Moreover, by the triangle inequality and Lemma \ref{lem_control_Lambda_n+1}, we only need to consider $\bigl\|{\mathbf \Lambda}-{\mathbf \Lambda}_{n}^{(1)}\bigr\|_{\HHS}$. Using the linearity of ${\mathbf A}_{n,k}, {\mathbf A},{\mathbf C}^{\epsilon}$ and the fact that $\bigl\|{\mathbf B}\bigr\|_{\HHS} = \bigl\|{\mathbf B}^*\bigr\|_{\HHS}$ for an operator ${\mathbf B}$, it follows that
\begin{align*}
\bigl\|{\mathbf \Lambda}_{n,k} - {\mathbf \Lambda}\bigr\|_{\HHS} &\leq \bigl\|{\mathbf A}_{n,k} \mathbf{C}^{\epsilon} {\mathbf A}_{n,k}^* - {\mathbf A}_{n,k} \mathbf{C}^{\epsilon} {\mathbf A}^*\bigr\|_{\HHS} + \bigl\|{\mathbf A} \mathbf{C}^{\epsilon} {\mathbf A}^* - {\mathbf A}_{n,k} \mathbf{C}^{\epsilon} {\mathbf A}^*\bigr\|_{\HHS} \\&\leq 2 \bigl\|{\mathbf A} - {\mathbf A}_{n,k}\bigr\|_{\HHS}\bigl\|{\mathbf C}^{\epsilon}\bigr\|_{\HHS} \bigl(\bigl\|{\mathbf A}\bigr\|_{\HHS} + \bigl\|{\mathbf A}_{n,k}\bigr\|_{\HHS} \bigr).
\end{align*}
By the triangle inequality, we have $\bigl\|{\mathbf A} - {\mathbf A}_{n,k}\bigr\|_{\HHS} \leq \sum_{j > n - k} \bigl\|\alpha_j\bigr\|_{\HHS} + \sum_{j < 1 - k} \bigl\|\alpha_j\bigr\|_{\HHS} = \oo\bigl(1\bigr)$ as $k,n \to \infty$. Since
\begin{align}
\bigl\|n^{-1}\sum_{k = 1}^{n}{\mathbf \Lambda}_{n,k} - {\mathbf \Lambda}\bigr\|_{\HHS} \leq n^{-1} \sum_{k = l}^{n-l}\bigl\|{\mathbf \Lambda}_{n,k} - {\mathbf \Lambda}\bigr\|_{\HHS} + 2l/n \bigl\|{\mathbf \Lambda}\bigr\|_{\HHS},
\end{align}
the claim follows.
\end{proof}

We are now ready to proceed to the proof of Theorem \ref{thm_berry_esseen}. Due to Lemma \ref{lem_l_eigenvalue_positive}, we have
\begin{align}\label{eq_eigen_13_positive}
\liminf_{n \to \infty} \min_{1 \leq k \leq 13} \lambda_{n,k} > 0.
\end{align}
Hence applying Lemma \ref{lem_ulyanov}, it follows that
\begin{align}\nonumber
\Delta_n(\mu) &\leq \sup_{x \in \R} \Bigl|P\bigl(\bigl\|S_1^{n+1}(T) + \mu\bigr\|_{\HH} \leq x \bigr) - P\bigl(\bigl\|S_1^{n+1}(Z) + \mu\bigr\|_{\HH} \leq x \bigr) \Bigr| \\& \nonumber \lesssim  \bigl(1 + \|\mu\|_{\HH}^p\bigr) \Bigl(\sum_{k = 1}^{n+1} \Bigl\|\|T_k\|_{\HH} \ind(\|T_k\|_{\HH} \geq 1)\Bigr\|_2^2 + \sum_{k = 1}^{n+1}\Bigl\|\|T_k\|_{\HH} \Bigr\|_p^p\Bigr)\\&\stackrel{def}{=} \bigl(1 + \|\mu\|_{\HH}^p\bigr)\bigl({\bf I}_n + {\bf II}_n\bigr).
\end{align}
We first treat ${\bf I}_n$. For $1 \leq k \leq n$, we obtain
\begin{align}\nonumber \label{eq_I_n_1}
\Bigl\|\|T_k\|_{\HH} \ind(\|T_k\|_{\HH} \geq 1)\Bigr\|_2^2 &= \bigl\|n^{-1/2}\|{\mathbf A}_{n,k}(\epsilon_k)\|_{\HH} \ind(\|{\mathbf A}_{n,k}(\epsilon_k)\|_{\HH} \geq n^{1/2})\bigr\|_2^2 \\&\nonumber\lesssim n^{-1 - \frac{p - 2}{2}} \bigl\|\|{\mathbf A}_{n,k}(\epsilon_k)\|_{\HH}\bigr\|_p^p\\&\lesssim n^{-\frac{p}{2}} \E\bigl[\|\epsilon_0\|_{\HH}^p\bigr].
\end{align}
Similarly, using Lemma \ref{lem_control_Lambda_n+1}, we deduce that 
\begin{align}\label{eq_I_n_2}
\Bigl\|\|T_{n+1}\|_{\HH} \ind(\|T_{n+1}\|_{\HH} \geq 1)\Bigr\|_2^2 \lesssim n^{-\frac{p}{2} + 1}\E\bigl[\|\epsilon_0\|_{\HH}^p\bigr].
\end{align}
Combining \eqref{eq_I_n_1} and \eqref{eq_I_n_2}, we obtain
\begin{align}\label{eq_I_n_3}
{\bf I}_n \lesssim n^{-\frac{p}{2} + 1}\E\bigl[\|\epsilon_0\|_{\HH}^p\bigr].
\end{align}
Next, we deal with ${\bf II}_n$. First note that for $1 \leq k \leq n$, we obtain via the triangle inequality
\begin{align*}
\bigl\|\|T_k\|_{\HH} \bigr\|_p \lesssim n^{-\frac{1}{2}} \sum_{j \in \Z} \|\alpha_j\|_{\HHS} \bigl\|\|\epsilon_0\|_{\HH} \bigr\|_p \lesssim n^{-1/2} \E\bigl[\|\epsilon_0\|_{\HH}^p\bigr].
\end{align*}
Using Lemma \ref{lem_control_Lambda_n+1}, we thus deduce that
\begin{align}\label{eq_II_n_1}
{\bf II}_n \lesssim \sum_{j = 1}^{n+1} n^{-\frac{p}{2}}\E\bigl[\|\epsilon_0\|_{\HH}^p\bigr] \lesssim n^{-\frac{p}{2} + 1}\E\bigl[\|\epsilon_0\|_{\HH}^p\bigr].
\end{align}
Combining \eqref{eq_I_n_3} with \eqref{eq_II_n_1} completes the proof.
\end{proof}


\begin{proof}[Proof of Theorem \ref{thm_berry_esseen_II}]
The proof is based on two main lemmas. The first one describes a comparison result for two Gaussian, Hilbert-space valued random variables in terms of perturbed covariance operators.

\begin{lem}\label{lem_gauss_hoelder_inf}
Let $Y_1,Y_2 \in \HH$ be two Gaussian random variables with covariance operators ${\bf C}^{Y_1}$ and ${\bf C}^{Y_2}$ of trace class and finite mean $\mu$. Suppose that for $\delta > 0$
\begin{align}\label{eq_condi_lem_gauss_hoelder_inf}
\lambda_{13}^{Y_1} > \delta \quad \text{and} \quad \bigl\|{\bf C}^{Y_1} - {\bf C}^{Y_2} \bigr\|_{\HHS} \leq \delta,
\end{align}
where $\bigl\{\lambda_j^{Y_1}\bigr\}_{j \in \N}$ denotes the eigenvalues of ${\bf C}^{Y_1}$. Then
\begin{align*}
\sup_{x \in \R}\bigl|P\bigl(\|Y_1\|_{\HH} \leq x \bigr) - P\bigl(\|Y_2\|_{\HH} \leq x \bigr) \bigr| \lesssim \bigl|\tr\bigl({\bf C}^{Y_1}\bigr) - \tr\bigl({\bf C}^{Y_2}\bigr)\bigr|,
\end{align*}
where $\tr\bigl({\bf B}\bigr)$ denotes the trace of an operator ${\bf B}$.
\end{lem}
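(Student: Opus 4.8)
The plan is to reduce everything to the squared norms $W_i=\|Y_i\|_{\HH}^2$, write the characteristic functions of $W_1,W_2$ in closed Fredholm‑determinant form, and then compare the two laws through an Esseen‑type smoothing inequality, the bounded density needed on the reference side being supplied by the thirteen non‑degenerate eigenvalues of ${\bf C}^{Y_1}$ via Lemma~\ref{lem_gauss_char}. Since $x\mapsto x^2$ is increasing on $[0,\infty)$ and $\|Y_i\|_{\HH}\geq 0$, one has $P(\|Y_i\|_{\HH}\leq x)=P(W_i\leq x^2)$, so it suffices to bound $\sup_{y\geq 0}|P(W_1\leq y)-P(W_2\leq y)|$. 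Writing $Y_i=\mu+G_i$ with $G_i$ centred Gaussian of trace‑class covariance ${\bf C}^{Y_i}$, the characteristic function of $W_i$ is
\[
\psi_i(t)=\E\bigl[e^{\ic t W_i}\bigr]=\det\bigl(I-2\ic t\,{\bf C}^{Y_i}\bigr)^{-1/2}\exp\bigl(\ic t\langle\mu,(I-2\ic t\,{\bf C}^{Y_i})^{-1}\mu\rangle\bigr),
\]
and I set $\Phi_i(t)=\log\psi_i(t)=-\tfrac12\tr\log(I-2\ic t\,{\bf C}^{Y_i})+\ic t\langle\mu,(I-2\ic t\,{\bf C}^{Y_i})^{-1}\mu\rangle$.

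Next I would extract the bounded density. Applying Lemma~\ref{lem_gauss_char} with $Z=G_1$ and the deterministic (hence independent) part $\mu$ gives $|\psi_1(t)|\leq\prod_k(1+4t^2(\lambda_k^{Y_1})^2)^{-1/4}\leq(1+4\delta^2t^2)^{-13/4}$, using $\lambda_1^{Y_1}\geq\cdots\geq\lambda_{13}^{Y_1}>\delta$. Hence $\psi_1$ is integrable (it decays like $|t|^{-13/2}$) and $W_1$ has a bounded continuous density $p_1$ with $\sup_y p_1(y)\lesssim\delta^{-1}$; this is precisely the role of the thirteen eigenvalues. Lemma~\ref{lem_eigen_gen_upper_bound} transfers enough non‑degeneracy to ${\bf C}^{Y_2}$, since the eigenvalues drift by at most $\|{\bf C}^{Y_1}-{\bf C}^{Y_2}\|_{\HHS}\leq\delta$, which I use to keep the resolvents $(I-2\ic t{\bf C}^{Y_2})^{-1}$ uniformly bounded and $\psi_2$ decaying. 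Esseen's smoothing inequality then yields, for every $T>0$,
\[
\sup_{y}\bigl|P(W_1\leq y)-P(W_2\leq y)\bigr|\lesssim\int_{-T}^{T}\frac{|\psi_1(t)-\psi_2(t)|}{|t|}\,dt+\frac{\sup_y p_1(y)}{T}.
\]

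For the integrand I would use $|\psi_1(t)-\psi_2(t)|=|e^{\Phi_1(t)}-e^{\Phi_2(t)}|\leq|\Phi_1(t)-\Phi_2(t)|\,(|\psi_1(t)|\vee|\psi_2(t)|)$ and expand $\Phi_1-\Phi_2$ with the resolvent identity
\[
(I-2\ic t{\bf C}^{Y_1})^{-1}-(I-2\ic t{\bf C}^{Y_2})^{-1}=2\ic t\,(I-2\ic t{\bf C}^{Y_1})^{-1}({\bf C}^{Y_1}-{\bf C}^{Y_2})(I-2\ic t{\bf C}^{Y_2})^{-1},
\]
together with the analogous representation of the determinant term along the segment ${\bf C}^{Y_s}=(1-s){\bf C}^{Y_1}+s{\bf C}^{Y_2}$,
\[
\tr\log(I-2\ic t{\bf C}^{Y_1})-\tr\log(I-2\ic t{\bf C}^{Y_2})=-2\ic t\int_0^1\tr\bigl[({\bf C}^{Y_1}-{\bf C}^{Y_2})(I-2\ic t{\bf C}^{Y_s})^{-1}\bigr]\,ds.
\]
Because $\mu$ is the common mean, the first‑order contribution of the mean part cancels, and a Neumann expansion of the resolvents shows $(\Phi_1(t)-\Phi_2(t))/(\ic t)\to\tr({\bf C}^{Y_1}-{\bf C}^{Y_2})$ as $t\to0$; that is, the leading contribution is $\ic t\,\tr({\bf C}^{Y_1}-{\bf C}^{Y_2})$. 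Dividing by $|t|$ and integrating against the rapidly decaying majorant then produces a bound proportional to $|\tr{\bf C}^{Y_1}-\tr{\bf C}^{Y_2}|$.

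The hard part will be this last estimate. One must show that, after integration in $t$, the higher‑order‑in‑$({\bf C}^{Y_1}-{\bf C}^{Y_2})$ pieces of $\Phi_1-\Phi_2$ (which near $t=0$ are of size $t^2\,\tr[({\bf C}^{Y_1}-{\bf C}^{Y_2})(\cdots)]$) do not dominate the genuine trace‑difference term, and simultaneously to cope with the fact that the thirteen‑eigenvalue lower bound is stated only for ${\bf C}^{Y_1}$, so that the decay of $|\psi_2|$ has to be recovered from Lemma~\ref{lem_eigen_gen_upper_bound} rather than assumed. This is what forces the use of $|\psi_1|$ as the integrable majorant on the bulk of the $t$‑range and the truncation at finite $T$ (with the remainder $\sup_y p_1/T$) for the tail; the subtlety is that the Hilbert–Schmidt size of the perturbation must itself be shown comparable to the trace difference under the hypotheses $\lambda_{13}^{Y_1}>\delta$ and $\|{\bf C}^{Y_1}-{\bf C}^{Y_2}\|_{\HHS}\leq\delta$, so that all subleading terms collapse into the claimed bound $\lesssim|\tr{\bf C}^{Y_1}-\tr{\bf C}^{Y_2}|$.
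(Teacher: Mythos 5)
Your route differs genuinely from the paper's: you attack $W_i=\|Y_i\|_{\HH}^2$ globally, with the exact Fredholm-determinant characteristic functions and Esseen smoothing, whereas the paper follows Yurinskii's Lindeberg-type interpolation — it writes $Y_i-\mu$ as a sum of $n$ IID Gaussian increments with covariances $n^{-1}{\bf C}^{Y_i}$, swaps one increment at a time, and (invoking eq.\ 3.5 of Yurinskii) reduces the per-step error to the quantity $Q_{n,k}(t)$, which is proportional to $\E[\|\xi_{n,k}\|_{\HH}^2]-\E[\|\eta_{n,k}\|_{\HH}^2]=n^{-1}\bigl(\tr({\bf C}^{Y_1})-\tr({\bf C}^{Y_2})\bigr)$, with Lemma \ref{lem_gauss_char} applied to the hybrid sums $W_{n,k}$ supplying integrability in $t$. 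Unfortunately your plan has a genuine gap exactly at the step you flag as "the hard part", and it is not merely hard: it fails. Writing $D={\bf C}^{Y_1}-{\bf C}^{Y_2}$, the expansion of your log-characteristic functions at $t=0$ is
\[
\Phi_1(t)-\Phi_2(t)=\ic t\,\tr\bigl(D\bigr)-t^2\Bigl[\tr\bigl(D({\bf C}^{Y_1}+{\bf C}^{Y_2})\bigr)+2\langle\mu,D\mu\rangle\Bigr]+\OO\bigl(t^3\bigr),
\]
and the $t^2$ coefficient is \emph{first} order in $D$ (your parenthetical "higher-order-in-$D$" is inaccurate) and is not controlled by $|\tr(D)|$: take $D$ traceless and nonzero, say $D=\eps\,(e_1\otimes e_1-e_2\otimes e_2)$, so that $\tr(D)=0$ while $\tr\bigl(D({\bf C}^{Y_1}+{\bf C}^{Y_2})\bigr)\neq 0$ in general. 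After integrating $|\Phi_1-\Phi_2|/|t|$ against your decaying majorant, this term contributes a quantity of order $\|D\|_{\HHS}$ (or trace-norm of $D$), so your scheme can deliver at best a bound of the form $|\tr({\bf C}^{Y_1})-\tr({\bf C}^{Y_2})|+\|D\|_{\HHS}$, not the pure trace bound in the statement.

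Your proposed repair — showing "the Hilbert--Schmidt size of the perturbation must itself be shown comparable to the trace difference under the hypotheses" — is impossible: the assumptions $\lambda_{13}^{Y_1}>\delta$ and $\|D\|_{\HHS}\leq\delta$ impose no lower bound on $|\tr(D)|$ in terms of $\|D\|_{\HHS}$, as the traceless example shows. This is precisely the structural reason the paper uses the swap: in the interpolation, the trace difference enters per step through $\E[\|\xi_{n,k}\|_{\HH}^2]-\E[\|\eta_{n,k}\|_{\HH}^2]$, which your global Fourier argument cannot isolate because the full operator difference enters the phase at order $t^2$. Two further quantitative points. First, Lemma \ref{lem_eigen_gen_upper_bound} gives only $\lambda_{13}^{Y_2}\geq\lambda_{13}^{Y_1}-\delta>0$, with no uniform margin, so the decay of $|\psi_2|$ — which your mean-value bound $|e^{\Phi_1}-e^{\Phi_2}|\leq|\Phi_1-\Phi_2|\,\bigl(|\psi_1|\vee|\psi_2|\bigr)$ requires — is not quantitatively available from the hypotheses; you would need to arrange the estimate so that only $|\psi_1|$ serves as majorant. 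Second, it is worth noting that the weaker bound your method does yield would in fact suffice for the application in Theorem \ref{thm_berry_esseen_II}, since there both the trace difference and $\|D\|_{\HHS}$ are of the same order $n^{-1}\sum_{j\in\Z}(|j|\wedge n)\|\alpha_j\|_{\HHS}$ (Lemma \ref{lem_approx_cov_operators_II} and \eqref{eq_thm_be_II_3}); but it does not prove the lemma as stated.
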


\begin{proof}[Proof of Lemma \ref{lem_gauss_hoelder_inf}]
We may argue similarly as in ~\cite{yurinskii_1982}. Due to the Gaussianity of $Y_1$ and $Y_2$
\begin{align}
Y_1 - \mu \stackrel{d}{=} \sum_{k = 1}^n \xi_{n,k}, \quad Y_2 - \mu \stackrel{d}{=} \sum_{k = 1}^n \eta_{n,k},
\end{align}
where $\bigl\{\xi_{n,k}\bigr\}_{1 \leq k \leq n}$ and $\bigl\{\eta_{n,k}\bigr\}_{1 \leq k \leq n}$ are IID Gaussian sequences with Covariance operators $n^{-1}{\bf C}^{Y_1}$ and $n^{-1}{\bf C}^{Y_2}$. For $n \in \N$ denote with
\begin{align}
W_{n,k} = \mu + \sum_{j = 1}^{k-1} \xi_{n,k} + \sum_{j = k + 1}^n \eta_{n,k}, \quad 1 \leq k \leq n.
\end{align}
Following the proof in ~\cite{yurinskii_1982}, a careful inspection reveals (cf. equation 3.5 in ~\cite{yurinskii_1982}) that it suffices to reconsider the quantity
\begin{align}
Q_{n,k}(t) = |t|\bigl|\E\bigl[\exp(\ic t \| W_{n,k} \|_{\HH}^2 \bigr]\bigl(\E[\|\xi_{n,k}\|_{\HH}^2] - \E[\|\eta_{n,k}\|_{\HH}^2]\bigr) \bigr|, \quad t \in \R,
\end{align}
and establish that
\begin{align}\label{eq_lem_gauss_hoelder_inf_3}
\int_{\R} \frac{Q_{n,k}(t) }{t }  d\,t \lesssim \frac{1}{n}\bigl|\tr\bigl({\bf C}^{Y_1}\bigr) - \tr\bigl({\bf C}^{Y_2}\bigr)\bigr|.
\end{align}
Once we have \eqref{eq_lem_gauss_hoelder_inf_3}, the results in ~\cite{yurinskii_1982} imply that
\begin{align*}
\sup_{x \in \R}\bigl|P\bigl(\|Y_1\|_{\HH} \leq x \bigr) - P\bigl(\|Y_2\|_{\HH} \leq x \bigr) \bigr| &\lesssim \frac{1}{n}\sum_{k = 1}^n\bigl|\tr\bigl({\bf C}^{Y_1}\bigr) - \tr\bigl({\bf C}^{Y_2}\bigr)\bigr| + \frac{1+\|\mu\|_{\HH}^3}{\sqrt{n}}\\& \lesssim \bigl|\tr\bigl({\bf C}^{Y_1}\bigr) - \tr\bigl({\bf C}^{Y_2}\bigr)\bigr| + \frac{1+\|\mu\|_{\HH}^3}{\sqrt{n}}.
\end{align*}
Selecting $n$ sufficiently large, the claim follows. We proceed by showing \eqref{eq_lem_gauss_hoelder_inf_3}. To this end, note that by Lemma \ref{lem_gauss_char} we have that
\begin{align}\label{eq_lem_gauss_hoelder_inf_4}
\bigl|\E\bigl[\exp(\ic t \| W_{n,k} \|_{\HH}^2 \bigr]\bigr| \leq \prod_{j = 1}^{\infty} \bigl(1 + 4 t^2 \tilde{\lambda}_{n,k,j}^2 \bigr)^{-1/4},
\end{align}
where $\tilde{\lambda}_{n,k,j}$ denote the eigenvalues of the covariance operator
\begin{align}
\tilde{\bf C}_{n,k}(x) = \E\bigl[\langle W_{n,k} - \mu, x \rangle(W_{n,k} - \mu) \bigr].
\end{align}
Exploiting the mutual independence of $\xi_{n,k}$ and $\eta_{n,k}$, it follows that
\begin{align*}
\bigl\|\tilde{\bf C}_{n,k} - {\bf C}^{Y_1} \bigr\|_{\HHS} &\leq \frac{1}{n}\bigl\|{\bf C}^{Y_1}\bigr\|_{\HHS} + \frac{n-k}{n}\bigl\|{\bf C}^{Y_2} - {\bf C}^{Y_1}\bigr\|_{\HHS} \\&\leq \delta + \OO\bigl(n^{-1}\bigr).
\end{align*}
Hence an application of Lemma \ref{lem_eigen_gen_upper_bound} yields that for $1 \leq k \leq 13$
\begin{align}\nonumber
\tilde{\lambda}_{n,k,j} &\geq \lambda_{k}^{Y_1} - \bigl|\lambda_{k}^{Y_1} - \tilde{\lambda}_{n,k,j}\bigr| \\&\geq \lambda_{k}^{Y_1} - \delta - \OO\bigl(n^{-1}\bigr) > 0
\end{align}
for sufficiently large $n$. Hence $\tilde{\lambda}_{n,k,j} > 0$ uniformly, and we conclude from \eqref{eq_lem_gauss_hoelder_inf_4} that
\begin{align}
\int_{\R} \bigl|\E\bigl[\exp(\ic t \| W_k \|_{\HH}^2 \bigr]\bigr| d \,t \leq \int_{\R} \prod_{j = 1}^{13} \bigl(1 + 4 t^2 \tilde{\lambda}_{n,k,j}^2 \bigr)^{-1/4} d\,t < \infty.
\end{align}
Note that here we actually only require that $\lambda_{3}^{Y_1} > 0$. Since we have that
\begin{align*}
\E\bigl[\|X\|_{\HH}^2\bigr] = \tr\bigl({\bf C}^X\bigr)
\end{align*}
for any $X \in \HH$ with covariance operator ${\bf C}^X$ of trace class, the claim follows selecting $n$ large enough.
\end{proof}

Next, recall that
\begin{align*}
{\bf A}_{n,k}^c = \sum_{j < -n+k} \alpha_j + \sum_{j > k-1} \alpha_j \quad 1 \leq k \leq n,
\end{align*}
and that we have the decomposition
\begin{align}\nonumber
\sum_{k = 1}^n Z_k &= \sum_{k = 1}^n {\bf A}(\xi_k) - \sum_{k = 1}^n {\bf A}_{n,k}^c(\xi_k) + \sum_{k > n} {\mathbf A}_{n,k}(\xi_k) + \sum_{k < 1} {\mathbf A}_{n,k}(\xi_{k})\\& \stackrel{def}{=} \sum_{k = 1}^n {\bf A}(\xi_k) + {\bf III}_n + {\bf IV}_n + {\bf V}_n.
\end{align}

We now have our second lemma.
\begin{lem}\label{lem_approx_cov_operators_II}
Assume that Assumption \ref{ass_main} holds. Then
\begin{align*}
\Bigl\|{\bf A} {\bf C}^{\epsilon} {\bf A}^* - \frac{1}{n}\E\bigl[\langle S_1^{n+1}(Z), x \rangle S_1^{n+1}(Z) \bigr] \Bigr\|_{\HHS} \lesssim \frac{1}{n}\sum_{j \in \Z} (|j| \wedge n) \bigl\|\alpha_j\bigr\|_{\HHS}.
\end{align*}
\end{lem}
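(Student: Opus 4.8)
The plan is to observe that, spelled out via the Beveridge--Nelson decomposition, the operator $\tfrac1n\E\bigl[\langle S_1^{n+1}(Z),\cdot\rangle S_1^{n+1}(Z)\bigr]$ coincides with the covariance operator of the normalized Gaussian partial sum $n^{-1/2}\sum_{k=1}^n Z_k$, whereas ${\bf A}\mathbf{C}^{\epsilon}{\bf A}^* = {\bf \Lambda}$ is its long-run covariance. The quantity to bound is thus the standard gap between a Ces\`aro-weighted finite sum of autocovariance operators and the full long-run covariance. I would organize the whole argument in terms of the lag-$h$ autocovariance operators of the stationary Gaussian linear process $\{Z_k\}_{k\in\Z}$.

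First I would compute these autocovariances. Since $Z_k = \sum_j\alpha_j(\xi_{k+j})$ with the $\xi$'s centered and IID, all cross terms vanish unless the two noise indices agree, which gives
\begin{align*}
\Gamma(h) := \E\bigl[\langle Z_h,\cdot\rangle Z_0\bigr] = \sum_{j\in\Z}\alpha_{h+j}\mathbf{C}^{\epsilon}\alpha_j^*.
\end{align*}
Submultiplicativity of the norm yields $\|\Gamma(h)\|_{\HHS}\lesssim\sum_{j\in\Z}\|\alpha_{h+j}\|_{\HHS}\|\alpha_j\|_{\HHS}$, the constant absorbing $\|\mathbf{C}^{\epsilon}\|_{\HHS}$. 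By Assumption \ref{ass_main}(ii) this is summable in $h$, and reindexing $l=h+j$ and factoring gives the identity $\sum_{h\in\Z}\Gamma(h) = {\bf A}\mathbf{C}^{\epsilon}{\bf A}^* = {\bf \Lambda}$.

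Next, by stationarity the covariance operator of $n^{-1/2}\sum_{k=1}^n Z_k$ is the triangular average $\sum_{|h|<n}(1-|h|/n)\Gamma(h)$, so that
\begin{align*}
{\bf \Lambda} - \sum_{|h|<n}\Bigl(1-\tfrac{|h|}{n}\Bigr)\Gamma(h) = \sum_{|h|\ge n}\Gamma(h) + \frac1n\sum_{|h|<n}|h|\,\Gamma(h),
\end{align*}
and it remains to bound both remainders in $\|\cdot\|_{\HHS}$ by $\tfrac1n\sum_{j\in\Z}(|j|\wedge n)\|\alpha_j\|_{\HHS}$. Plugging in the autocovariance bound and again substituting $l=h+j$ turns each remainder into a double sum $\sum_{j,l}(\cdots)\|\alpha_l\|_{\HHS}\|\alpha_j\|_{\HHS}$ constrained by the size of $|l-j|$. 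For the first remainder I would use that $|l-j|\ge n$ forces $\max(|l|,|j|)\ge n/2$, reducing to the constraint $|j|\ge n/2$ on one factor, together with $\sum_{|j|\ge n/2}\|\alpha_j\|_{\HHS}\le\tfrac2n\sum_{j\in\Z}(|j|\wedge n)\|\alpha_j\|_{\HHS}$. For the second I would invoke the elementary inequality $(|l-j|\wedge n)\le(|l|\wedge n)+(|j|\wedge n)$, which factors the sum as $\tfrac2n\bigl(\sum_{l\in\Z}\|\alpha_l\|_{\HHS}\bigr)\sum_{j\in\Z}(|j|\wedge n)\|\alpha_j\|_{\HHS}$. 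In both cases the free factor $\sum_{j}\|\alpha_j\|_{\HHS}$ is finite and goes into the constant.

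The main obstacle is precisely this last estimate: the point is to see that the autocovariance tails, once reindexed, can always be charged to a genuinely large coefficient index, which is exactly what collapses the double sum into the single weighted sum $\tfrac1n\sum_{j}(|j|\wedge n)\|\alpha_j\|_{\HHS}$. Everything else is routine --- the autocovariance formula and the Ces\`aro identity are standard for Gaussian linear processes once the leading term ${\bf A}\mathbf{C}^{\epsilon}{\bf A}^*$ has been matched, and the only care needed is in keeping the combinatorics of the reindexed double sum and the truncation $|j|\wedge n$ consistent.
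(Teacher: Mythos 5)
Your proposal is correct, but it takes a genuinely different route from the paper. The paper stays inside the Beveridge--Nelson decomposition: it writes $\sum_{k=1}^n Z_k = \sum_{k=1}^n {\bf A}(\xi_k) + {\bf III}_n + {\bf IV}_n + {\bf V}_n$, exploits the mutual independence of these blocks (and the vanishing of cross terms between distinct $\xi_i$'s), and reduces everything to operator-norm bounds of the form $\bigl\|\E[\langle {\bf A}(\xi_i),x\rangle {\bf A}_{n,i}^c(\xi_i)]\bigr\|_{\HHS} \lesssim \|x\|_{\HH}\|{\bf A}_{n,i}^c\|_{\HHS}$, after which the key counting facts $\sum_{k=1}^n \|{\bf A}_{n,k}^c\|_{\HHS} \lesssim \sum_j(|j|\wedge n)\|\alpha_j\|_{\HHS}$ and $\sum_{i>n}\|{\bf A}_{n,i}\|_{\HHS} \lesssim \sum_j(|j|\wedge n)\|\alpha_j\|_{\HHS}$ finish the job. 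You instead bypass the BND entirely and run the classical Bartlett-type computation lifted to operators: the autocovariance formula $\Gamma(h) = \sum_j \alpha_{h+j}{\bf C}^\epsilon\alpha_j^*$, the triangular-kernel identity for the covariance of the normalized partial sum, and then the two remainders handled by reindexing, with $|l-j|\ge n$ forcing $\max(|l|,|j|)\ge n/2$ for the tail and the subadditivity $(|l-j|\wedge n)\le(|l|\wedge n)+(|j|\wedge n)$ for the triangular correction; I checked these combinatorial steps and they are sound, with the constant absorbing $\|{\bf C}^\epsilon\|_{\HHS}$ and $\sum_j\|\alpha_j\|_{\HHS}$ exactly as the lemma requires. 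Your route is more self-contained and arguably more transparent, since it identifies the bound as the standard gap between a Ces\`aro-weighted sum of autocovariances and the long-run covariance; what the paper's route buys is reuse, because the blocks ${\bf III}_n, {\bf IV}_n, {\bf V}_n$ and the bounds on $\|{\bf A}_{n,k}^c\|_{\HHS}$ feed directly into the subsequent trace estimates \eqref{eq_thm_be_II_3} and \eqref{eq_thm_be_II_4} in the proof of Theorem \ref{thm_berry_esseen_II}. One small point in your favor: the paper's statement carries a notational slip (in the displayed proof $S_1^{n+1}(Z)$ is manifestly the unnormalized sum $\sum_{k=1}^n Z_k$, while the definition in the proof of Theorem \ref{thm_berry_esseen} would make it $n^{-1/2}\sum_{k=1}^n Z_k$, rendering the prefactor $\frac{1}{n}$ inconsistent), and your identification of the target as the covariance operator of $n^{-1/2}\sum_{k=1}^n Z_k$ resolves it in the intended way.
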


\begin{proof}[Proof of Lemma \ref{lem_approx_cov_operators_II}]
Observe that $\sum_{k = 1}^n {\bf A}(\xi_k) + {\bf III}_n$, ${\bf IV}_n$ and ${\bf V}_n$ are all mutually independent. It follows that for any $x \in \HH$ we have that
\begin{align}\label{eq_IV_V_cross}
\E\bigl[\langle {\bf IV}_n, x \rangle {\bf V}_n \bigr] = \E\bigl[\langle {\bf V}_n, x \rangle {\bf IV}_n \bigr] = 0,
\end{align}
(with $0 \in \HH$), and this remains valid if we substitute ${\bf IV}_n$ or ${\bf V}_n$ with $\sum_{k = 1}^n {\bf A}(\xi_k) + {\bf III}_n$. Similarly, if $i \neq j$ one readily derives that for $x \in \HH$
\begin{align}\label{eq_A_and_Ac_cross}
\E\bigl[\langle {\bf A}(\xi_i), x \rangle {\bf A}_{j}(\xi_j) \bigr] = \E\bigl[\langle {\bf A}(\xi_i), x \rangle {\bf A}_{n,j}^c(\xi_j) \bigr] = \E\bigl[\langle {\bf A}_{n,i}^c(\xi_i), x \rangle {\bf A}(\xi_j) \bigr] = 0,
\end{align}
(with $0 \in \HH$), exploiting the independence of $\xi_i$ and $\xi_j$ and the linearity of the operators ${\bf A}$ and ${\bf A}_{n,j}^c$. 
Denote with $e_j$ the eigenfunctions of ${\bf C}^{\epsilon}$, and with $\lambda_j^{\epsilon}$ its corresponding eigenvalues. If $i = j$, it then follows that
\begin{align}\nonumber \label{eq_A_and_Ac}
\Bigl\|\E\bigl[\langle {\bf A}(\epsilon_i), x \rangle {\bf A}_{n,i}^c(\epsilon_i) \bigr]\Bigr\|_{\HHS} &=  \Bigl\|\sum_{l = 1}^{\infty}\lambda_l^{\epsilon}\E\bigl[\langle {\bf A}_{}(e_l), x \rangle {\bf A}_{n,i}^c(e_l) \bigr]\Bigr\|_{\HHS}\\ \nonumber &\leq \sum_{l = 1}^{\infty} \lambda_j^{\epsilon}\bigl\|{\bf A}_{}\bigr\|_{\HHS} \bigl\|x\bigr\|_{\HH} \bigl\|{\bf A}_{n,i}^c\bigr\|_{\HHS}\\&\lesssim \bigl\|x\bigr\|_{\HH} \bigl\|{\bf A}_{n,i}^c\bigr\|_{\HHS},
\end{align}
where we used Cauchy-Schwarz and $\sum_{j = 1}^{\infty} \|\alpha_j\|_{\HHS} < \infty$. As before, the same bound also applies if we exchange ${\bf A}_{n,i}^c$ and ${\bf A}$. Similarly, we also obtain that
\begin{align}\label{eq_A_and_A}
\Bigl\|\E\bigl[\langle {\bf A}_{n,i}(\epsilon_i), x \rangle {\bf A}_{n,i}(\epsilon_i) \bigr]\Bigr\|_{\HHS} \lesssim \bigl\|x\bigr\|_{\HH} \bigl\|{\bf A}_{n,i}\bigr\|_{\HHS}
\end{align}
and
\begin{align}\label{eq_Ac_and_Ac}
\Bigl\|\E\bigl[\langle {\bf A}_{n,i}^c(\epsilon_i), x \rangle {\bf A}_{n,i}^c(\epsilon_i) \bigr]\Bigr\|_{\HHS} \lesssim \bigl\|x\bigr\|_{\HH} \bigl\|{\bf A}_{n,i}^c\bigr\|_{\HHS}.
\end{align}
The treatment of ${\bf IV}_n$, ${\bf V}_n$ follows as in Lemma \ref{lem_control_Lambda_n+1}. For the sake of completeness, we have that
\begin{align}\nonumber \label{eq_lem_approx_cov_II_2}
\Bigl\|\E\bigl[\langle {\bf IV}_n, x \rangle {\bf IV}_n \bigr]\Bigr\|_{\HHS} &\leq \sum_{i > n} \sum_{j > n} \Bigl\|\E\bigl[\langle {\mathbf A}_{n,i}(\xi_i), x \rangle {\mathbf A}_{n,j}(\xi_j)\bigr]\Bigr\|_{\HHS} \\ \nonumber &= \sum_{i > n}  \Bigl\|\E\bigl[\langle {\mathbf A}_{n,i}(\xi_i), x \rangle {\mathbf A}_{n,i}(\xi_i)\bigr]\Bigr\|_{\HHS} \\&\lesssim  \sum_{i > n} \bigl\|{\bf A}_{n,i}\bigr\|_{\HHS} \bigl\|x\bigr\|_{\HH} \lesssim \sum_{j \in \Z} (|j|\wedge n) \bigl\|\alpha_i\bigr\|_{\HHS} \bigl\|x\bigr\|_{\HH}.
\end{align}
The same bound applies to ${\bf V}_n$, that is, we have
\begin{align}\label{eq_lem_approx_cov_II_3}
\Bigl\|\E\bigl[\langle {\bf V}_n, x \rangle {\bf V}_n \bigr]\Bigr\|_{\HHS} \lesssim \sum_{j \in \Z} (|j|\wedge n) \bigl\|\alpha_i\bigr\|_{\HHS} \bigl\|x\bigr\|_{\HH}.
\end{align}
By independence, we have
\begin{align}\nonumber
\E\bigl[\langle S_1^{n+1}(Z), x \rangle S_1^{n+1}(Z) \bigr] &= \E\bigl[\langle \sum_{k = 1}^n {\bf A}(\xi_k) + {\bf III}_n, x \rangle \bigl(\sum_{k = 1}^n{\bf A}(\xi_k) + {\bf III}_n\bigr)\bigr]  \\&+ \E\bigl[\langle {\bf IV}_n, x \rangle {\bf IV}_n \bigr] + \E\bigl[\langle {\bf V}_n, x \rangle {\bf V}_n \bigr].
\end{align}
Using \eqref{eq_A_and_Ac} and \eqref{eq_Ac_and_Ac}, it follows that
\begin{align}\nonumber \label{eq_lem_approx_cov_II_4}
&\Bigl\|\E\bigl[\langle \sum_{k = 1}^n {\bf A}(\xi_k) + {\bf III}_n, x \rangle \bigl(\sum_{k = 1}^n{\bf A}(\xi_k) + {\bf III}_n\bigr)\bigr] - \sum_{k = 1}^n\E\bigl[\langle {\bf A}(\xi_k), x \rangle {\bf A}(\xi_k)\bigr] \Bigr\|_{\HHS} \\&\lesssim \sum_{j \in \Z} (|j|\wedge n) \bigl\|\alpha_j\bigr\|_{\HHS}.
\end{align}
Combining \eqref{eq_lem_approx_cov_II_2}, \eqref{eq_lem_approx_cov_II_3} and \eqref{eq_lem_approx_cov_II_4}, we finally obtain that
\begin{align}
\Bigl\|\E\bigl[\langle S_1^{n+1}(Z), x \rangle S_1^{n+1}(Z) \bigr] - \sum_{k = 1}^n\E\bigl[\langle {\bf A}(\xi_k), x \rangle {\bf A}(\xi_k)\bigr] \Bigr\|_{\HHS} \lesssim \sum_{j \in \Z} (|j|\wedge n) \bigl\|\alpha_j\bigr\|_{\HHS}.
\end{align}
Since we have that
\begin{align*}
\E\bigl[\langle {\bf A}(\xi_k), x \rangle {\bf A}(\xi_k)\bigr] = {\bf A} {\bf C}^{\epsilon} {\bf A}^*,
\end{align*}
it follows that
\begin{align*}
\Bigl\|{\bf A} {\bf C}^{\epsilon} {\bf A}^* - \frac{1}{n}\E\bigl[\langle S_1^{n+1}(Z), x \rangle S_1^{n+1}(Z) \bigr] \Bigr\|_{\HHS} \lesssim \frac{1}{n}\sum_{j \in \Z} (|j| \wedge n) \bigl\|\alpha_j\bigr\|_{\HHS}.
\end{align*}
\end{proof}

We are now ready to proceed to the actual proof. For $\mu \in \HH$ denote with
\begin{align*}
Y_1 = \frac{1}{\sqrt{n}}\sum_{k = 1}^n {\bf A}(\xi_k) + \mu, \quad Y_2 = \frac{1}{\sqrt{n}}\sum_{k = 1}^n Z_k + \mu,
\end{align*}
and the corresponding covariance operators with ${\bf C}^{Y_1}$, ${\bf C}^{Y_2}$. Note that ${\bf C}^{Y_1} = {\bf A} {\bf C}^{\epsilon} {\bf A}^*$. The aim is to invoke Lemma \ref{lem_gauss_hoelder_inf}. To this end, we need to establish the necessary bounds. Since
\begin{align*}
\sum_{j \in \Z} (|j| \wedge n) \bigl\|\alpha_j\bigr\|_{\HHS} = \oo\bigl(n\bigr)
\end{align*}
due to $\sum_{j \in \Z} \|\alpha_j\|_{\HHS} < \infty$, Lemma \ref{lem_approx_cov_operators_II} yields that
\begin{align*}
\bigl\|{\bf C}^{Y_1} - {\bf C}^{Y_2} \bigr\|_{\HHS} = \oo\bigl(1\bigr).
\end{align*}
Hence condition \eqref{eq_condi_lem_gauss_hoelder_inf} is valid by Assumption . Next, note that by the independence of $\sum_{k = 1}^n {\bf A}(\xi_k) + {\bf III}_n$, ${\bf IV}_n$ and ${\bf V}_n$, we have that
\begin{align}\label{eq_thm_be_II_2}
n\E\bigl[\bigl\|Y_1\bigr\|_{\HH}^2\bigr] = \sum_{k = 1}^n \E\bigl[\|{\bf A}(\xi_k) - {\bf A}_{n,k}^c(\xi_k)\|_{\HH}^2\bigr] + \E\bigl[\|{\bf IV}_n\|_{\HH}^2\bigr] + \E\bigl[\|{\bf V}_n\|_{\HH}^2\bigr].
\end{align}
Using the triangle inequality and $a^2-b^2 = (a-b)(a+b)$, we get that
\begin{align*}
\bigl|\E\bigl[\|{\bf A}(\xi_k) - {\bf A}_{n,k}^c(\xi_k)\|_{\HH}^2 - \|{\bf A}(\xi_k)\|_{\HH}^2\bigr] \bigr| &\leq \E\bigl[\|{\bf A}_{n,k}^c(\xi_k)\|_{\HH}\bigl(2 \|{\bf A}(\xi_k)\|_{\HH} + \|{\bf A}_{n,k}^c(\xi_k)\|_{\HH}\bigr)\bigr] \\&\lesssim \|{\bf A}_{n,k}^c(\xi_k)\|_{\HHS} \E\bigl[\|\xi_0\|_{\HH}^2\bigr].
\end{align*}
Hence we obtain the estimate
\begin{align}\label{eq_thm_be_II_3}
\Bigl|\sum_{k = 1}^n \E\bigl[\|{\bf A}(\xi_k) - {\bf A}_{n,k}^c(\xi_k)\|_{\HH}^2\bigr] \Bigr| \lesssim  \sum_{j \in \Z} (|j| \wedge n) \bigl\|\alpha_j\bigr\|_{\HHS}.
\end{align}
Similarly, proceeding as in Lemma \ref{lem_control_Lambda_n+1}, one readily computes that for $p \in (2,3]$
\begin{align}\label{eq_thm_be_II_4}
\bigl\|\|{\bf IV}_n\|_{\HH}\bigr\|_p^p,\bigl\|\|{\bf V}_n\|_{\HH}\bigr\|_p^p  \lesssim  \sum_{j \in \Z} (|j| \wedge n) \bigl\|\alpha_j\bigr\|_{\HHS} \bigl\|\|\epsilon_0\|_{\HH}\bigr\|_p.
\end{align}
Combining \eqref{eq_thm_be_II_2} with \eqref{eq_thm_be_II_3} and \eqref{eq_thm_be_II_4}, the claim then follows from Lemma \ref{lem_gauss_hoelder_inf}.
\end{proof}


\begin{proof}[Proof of Theorem \ref{thm_lower_bound}]
For the proof, we construct an example where the upper bound is obtained, up to a constant. It suffices to consider the special case where $\HH = \R$ and $X_k$ is 'purely' non-causal, that is, $X_k = \sum_{j = 0}^{\infty} \alpha_j \epsilon_{k+j}$, $\alpha_j \in \R$. Moreover, we assume throughout this section that $\E\bigl[\epsilon_k^2\bigr] = 1$, $\alpha_j \geq 0$ and ${\bf A} = 1$ to simplify matters. We first require the following Lemma.

\begin{lem}\label{lem_control_sum_alpha_I}
Assume that $\alpha_j \geq 0$ such that $\sum_{j \in \N} \alpha_j < \infty$ and $\sum_{j \in \N} (j \wedge n) \alpha_j \to \infty$ as $n \to \infty$. Then
\begin{align*}
\sum_{k > n}^{} \bigl({\bf A}_{n,k}\bigr)^2 + \sum_{k < 1}^{} \bigl({\bf A}_{n,k}\bigr)^2&= \oo\Bigl(\sum_{j \in \N} (j \wedge n) \alpha_j\Bigr),\\
\sum_{k = 1}^n \bigl({\bf A}_{n,k}^c \bigr)^2 &= \oo\Bigl(\sum_{j \in \N} (j \wedge n) \alpha_j\Bigr).
\end{align*}
\end{lem}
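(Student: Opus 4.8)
The plan is to exploit the purely non-causal reduction ($\alpha_j = 0$ for $j < 0$) to rewrite both displays as sums of \emph{squared partial tail sums} of $\{\alpha_j\}$, and then upgrade a trivial $\OO(\cdot)$ bound into the required $\oo(\cdot)$ bound by a splitting argument that plays the vanishing of the tails against the divergence of the comparison quantity. Throughout set $R_k = \sum_{j \geq k}\alpha_j$ and $B_n = \sum_{j \in \N}(|j|\wedge n)\alpha_j$, so that by hypothesis $B_n \to \infty$, while $\sum_{j}\alpha_j = {\bf A} = 1$ forces $R_k \leq R_1 \leq 1$ and $R_k \to 0$.

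First I would simplify the operators using $\alpha_j = 0$ for $j < 0$. For $k < 1$ the summation range $[k-n,\,k-1]$ of ${\bf A}_{n,k}$ lies entirely in the negative integers, so ${\bf A}_{n,k} = 0$ and the term $\sum_{k<1}({\bf A}_{n,k})^2$ vanishes identically. For $1 \leq k \leq n$ the left tail $\sum_{j < k-n}\alpha_j$ of ${\bf A}_{n,k}^c$ vanishes as well (its indices are negative), leaving ${\bf A}_{n,k}^c = \sum_{j \geq k}\alpha_j = R_k$. For $k > n$ one has ${\bf A}_{n,k} = \sum_{j = k-n}^{k-1}\alpha_j$ with all indices positive. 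The key structural fact I would record next are two exact identities obtained by interchanging the order of summation (counting how often each $\alpha_j$ appears): $\sum_{k=1}^n R_k = \sum_{j \geq 1}(j\wedge n)\alpha_j = B_n$, and, after reindexing $m = k - n \geq 1$, $\sum_{k>n}{\bf A}_{n,k} = \sum_{m \geq 1}\sum_{j=m}^{m+n-1}\alpha_j = B_n$, since in each case $\alpha_j$ is counted exactly $(j\wedge n)$ times. I would also note the uniform bound $\sum_{j=m}^{m+n-1}\alpha_j \leq R_m$, valid for every $n$.

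With these identities in hand the $\oo$-statements follow from a single $\varepsilon$-$K$ splitting. Fix $\varepsilon > 0$ and choose $K$ (independent of $n$) with $R_m < \varepsilon$ for $m > K$. For the second display, split $\sum_{k=1}^n R_k^2$ into the head $\sum_{k \leq K}R_k^2 \leq K$ and the tail $\sum_{K < k \leq n} R_k^2 \leq \varepsilon \sum_{K < k \leq n}R_k \leq \varepsilon B_n$, using $R_k^2 \leq \varepsilon R_k$ for $k > K$. Since $B_n \to \infty$, the head term is $\OO(1) = \oo(B_n)$, so $\limsup_n \sum_{k=1}^n R_k^2 / B_n \leq \varepsilon$; letting $\varepsilon \downarrow 0$ gives $\sum_{k=1}^n({\bf A}_{n,k}^c)^2 = \oo(B_n)$. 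The same argument applies verbatim to $\sum_{k>n}({\bf A}_{n,k})^2 = \sum_{m \geq 1}\big(\sum_{j=m}^{m+n-1}\alpha_j\big)^2$: for $m > K$ the window sum is $< \varepsilon$ (uniformly in $n$, by $\sum_{j=m}^{m+n-1}\alpha_j \leq R_m$), so the tail contributes at most $\varepsilon \sum_{m \geq 1}\sum_{j=m}^{m+n-1}\alpha_j = \varepsilon B_n$, while the head has $\leq K$ terms each bounded by $1$.

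The main obstacle is precisely getting \emph{little}-o rather than big-O. The naive estimate $x^2 \leq x$ on $[0,1]$, combined with the identities above, only yields $\OO(B_n)$; the improvement to $\oo(B_n)$ is exactly what the hypotheses $\sum_j \alpha_j < \infty$ (which makes the tails $R_m \to 0$) and $\sum_j (|j|\wedge n)\alpha_j \to \infty$ (which makes $B_n$ diverge) are designed to provide. A minor point requiring care is the uniformity in $n$ of the bound $\sum_{j=m}^{m+n-1}\alpha_j \leq R_m$ used in the first display, which is what allows the single threshold $K$ to work simultaneously for all $n$.
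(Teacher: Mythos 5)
Your proof is correct and takes essentially the same route as the paper's: after observing ${\bf A}_{n,k}=0$ for $k\le 0$ and reducing both displays to sums of squared tail/window sums of $\{\alpha_j\}$, you split at a threshold and play the vanishing of the tails $R_m$ against the divergence of $B_n=\sum_{j}(j\wedge n)\alpha_j$ — your fixed $\varepsilon$--$K$ splitting with a $\limsup$ argument is just an equivalent parametrization of the paper's $n$-dependent cutoffs $m_n\to\infty$, $\delta_n\to 0$. The only (harmless) difference is that the paper dispatches the second claim as ``analogue,'' whereas you carry it out explicitly via the counting identity $\sum_{k=1}^n R_k = B_n$.
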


\begin{proof}[Proof of Lemma \ref{lem_control_sum_alpha_I}]
We only show the first claim, the second  follows in an analogue manner. Since $\sum_{j > L} \alpha_j \to 0$ as $L \to \infty$, there exists $\delta_n \to 0$ and $m_n \to \infty$ as $n \to \infty$, such that
\begin{align}\nonumber
\sum_{k > n}^{} \bigl({\bf A}_{n,k}\bigr)^2 &= \sum_{k >n}^{} \Bigl(\sum_{j = -n+k}^{k-1} \alpha_j \Bigr)^2 = \sum_{l = 1}^{\infty} \Bigl(\sum_{j = l}^{n+l-1} \alpha_j  \Bigr)^2 \\&\leq \sum_{l = 1}^{m_n} \sum_{j = l}^{n+l-1} \alpha_j + \delta_n \sum_{l > m_n} \sum_{j = l}^{n+l-1} \alpha_j \leq \sum_{j = 1}^{m_n} (j \wedge n) \alpha_j + \delta_n \sum_{j \in \N} (j \wedge n) \alpha_j,
\end{align}
where we also used $\sum_{j \in \N} \alpha_j = 1$. Since $\sum_{j \in \N} (j \wedge n) \alpha_j \to \infty$, we can choose $m_n$ such that
\begin{align*}
\sum_{j = 1}^{m_n} (j \wedge n) \alpha_j = \oo\Bigl(\sum_{j \in \N} (j \wedge n) \alpha_j\Bigr),
\end{align*}
and the claim follows for $\sum_{k > n}^{} \bigl({\bf A}_{n,k}\bigr)^2$. Regarding expression $\sum_{k < 1}^{} \bigl({\bf A}_{n,k}\bigr)^2$, note that ${\bf A}_{n,k} = 0$ for $k \leq 0$ by assumption, hence the claim.
\end{proof}

It is known in the literature that the rate $n^{p/2 - 1}$ is optimal (cf. ~\cite{petrov_book_1995}). Hence due to Theorem \ref{thm_berry_esseen}, it suffices to derive a lower bound for
\begin{align*}
\sup_{x \in \R}\Bigl|P\bigl(Z_{{\bf \Lambda}_n} \leq x \bigr) - P\bigl(Z_{\bf \Lambda} \leq x \bigr)\Bigr|.
\end{align*}
Moreover, we may assume without loss of generality that
\begin{align}
\sum_{j \in \N} (j \wedge n) \alpha_j \to \infty \quad \text{as $n \to \infty$,}
\end{align}
since otherwise the claim immediately follows. Denote with $\sigma^2 = \bigl\|Z_{\bf \Lambda}\bigr\|_2^2$ and $\sigma_n^2 = \bigl\|Z_{{\bf \Lambda}_n}\bigr\|_2^2$. Note that $\sigma^2 = {\bf A}^2$, and by \eqref{eq_thm_be_II_2} we have $\sigma_n^2 = n^{-1} \sum_{k \in \N} {\bf A}_{n,k}^2$. The proof relies on the following lower bound. For large enough $n$, there exists a constant $C_{\alpha} > 0$ (which can be chosen arbitrarily smaller than two) such that
\begin{align}\label{eq_fact_alpha_1}
\sigma^2 - \sigma_n^2 \geq \frac{C_{\alpha}}{n}\sum_{j \in \N} (j \wedge n) \alpha_j.
\end{align}
We first derive this lower bound, a simple application of the mean value Theorem then yields the claim, see below.
By Lemma \ref{lem_control_sum_alpha_I} it follows that
\begin{align*}
\sigma_n^2 &= \frac{1}{n} \sum_{k = 1}^n {\bf A}_{n,k}^2 + \frac{1}{n} \sum_{k > n}{\bf A}_{n,k}^2 + \frac{1}{n} \sum_{k < 1}^{} \bigl({\bf A}_{n,k}\bigr)^2 \\&= \frac{1}{n} \sum_{k = 1}^n {\bf A}_{n,k}^2 + \oo\Bigl(\sum_{j \in \N} (j \wedge n) \alpha_j\Bigr).
\end{align*}
On the other hand, we also have
\begin{align}\nonumber
{\bf A}^2 - \sum_{k = 1}^n \bigl({\bf A}_{n,k}\bigr)^2 &=  \sum_{k = 1}^n {\bf A}_{n,k}^c\bigl({\bf A} + {\bf A}_{n,k}\bigr) = \sum_{k = 1}^n {\bf A}_{n,k}^c\bigl(2{\bf A} - {\bf A}_{n,k}^c\bigr) \\&=  2\sum_{k = 1}^n {\bf A}_{n,k}^c - \sum_{k = 1}^n \bigl({\bf A}_{n,k}^c\bigr)^2.
\end{align}
Hence another application of Lemma \ref{lem_control_sum_alpha_I} yields the claim. We now finalize the proof. Let $0< x < \infty$ and denote with ${\mathcal S} = [x/\sigma^2,2x/\sigma^2]$. Then for large enough $n$, it follows from the mean value Theorem that
\begin{align*}
P\bigl(Z_{{\bf \Lambda}_n} \leq x \bigr) - P\bigl(Z_{\bf \Lambda} \leq x \bigr) \geq x \Big(\frac{1}{\sqrt{\sigma_n^2}} - \frac{1}{\sqrt{\sigma^2}}\Big) \inf_{y \in {\mathcal S}}\phi(y),
\end{align*}
where $\phi(y)$ denotes the density function of the Gaussian standard distribution. Using the fact that $\sigma^2 = 1$, we further obtain
\begin{align*}
P\bigl(Z_{{\bf \Lambda}_n} \leq x \bigr) - P\bigl(Z_{\bf \Lambda} \leq x \bigr) \geq C_x \bigl(\sigma^2 - \sigma_n^2\bigr)
\end{align*}
for some $C_x > 0$. Employing the lower bound of \eqref{eq_fact_alpha_1} then yields the desired result.
\end{proof}

\subsection{Proofs of Section \ref{sec_main_non_lin}}\label{sec_proof_nonlin}

\begin{proof}[Proof of Theorem \ref{thm_non_lin}]
The main idea of the proof is based on a conditioning argument, similar in spirit to the approach in ~\cite{jirak_aop_2015}. To this end, we first require some notation. Put $n = 2 K L$ for $L \thicksim n$ and $3 \leq K < \infty$, $K \in \N$ to be specified later. To simplify the exposition, we also assume that $L \in \N$, see the very last comment at the end of the proof on how to remove this assumption. We make the convention that $X_k = 0$ for $k \not \in \{1,\ldots,n\}$. Put $\mathcal{I}_l = \{k: \, (l-1)K < k \leq l K \}$, $\mathcal{I}_l^* = \mathcal{I}_l \cup \{(l-1)K \}$, $\F = \mathcal{F}_{L}^{(e)} = \sigma(\epsilon_k, \, k \in \mathcal{I}_{2l}^*, \, 1 \leq l \leq  L)$, and introduce the block variables
\begin{align*}
V_l = \sum_{k \in \mathcal{I}_l} X_k, \quad \overline{V}_l = V_l - \E\bigl[V_l| \mathcal{F}_L^{(e)}\bigr].
\end{align*}
The fact that $\mathcal{I}_l^*$ also contains the left endpoint of the interval (unlike to $\mathcal{I}_l$) is important in the sequel. We denote the corresponding even (e) and odd (o) partial sums with
\begin{align}\nonumber
S_L^{(o)}(\overline{V}) &= n^{-1/2}\sum_{l = 0}^{L} \overline{V}_{2l+1}, \quad
S_L^{(e)}({V}) = n^{-1/2}\sum_{l = 1}^{L} {V}_{2l}, \\
R_L^{(o)}(V) &= n^{-1/2}\sum_{l = 0}^{L} \E[V_{2l+1}|\mathcal{F}_L^{(e)}].
\end{align}
Hence we have the decomposition
\begin{align*}
n^{-1/2}S_n(X) = S_L^{(o)}(\overline{V}) + S_L^{(e)}({V}) + R_L^{(o)}(V),
\end{align*}
where we used $X_k = 0$ for $k \not \in \{1,\ldots,n\}$. Next, consider the conditional probability measure $P_{|\F}(\cdot) = P(\cdot |\mathcal{F}_L^{(e)})$. Observe that $\{\overline{V}_{2l+1}\}_{0 \leq l \leq L}$ is a sequence of centered, independent random variables under $P_{|\F}$ since $K \geq 3$ and due to the inclusion of the left endpoint in $\mathcal{I}_l^*$. Also note that $S_L^{(e)}({V})$ and $R_L^{(o)}(V)$ are $\mathcal{F}_L^{(e)}$-measurable. We make heavy use of these properties in the sequel. Likewise, under the measure $P_{|\F}$, let $Z_{L|\F}^{(o)}$ be a zero mean Gaussian random variable with (conditional) covariance operator
\begin{align}
{\bf \Lambda}_{|\F}^{(o)}(\cdot) = n^{-1} \sum_{l = 0}^{L}\E[\overline{V}_{2l+1} \langle \overline{V}_{2l+1},\cdot \rangle | \mathcal{F}_L^{(e)}].
\end{align}
Similarly, under $P$, let $Z_{L}^{(o)}$ and $Z_L^{(e,o)}$ be two mutually independent, zero mean Gaussian random variables, independent of $\mathcal{F}_L^{(e)}$, with covariance operators
\begin{align}\nonumber
{\bf \Lambda}_{}^{(o)}(\cdot) &= \E\bigl[\langle S_L^{(o)}(\overline{V}), \cdot \rangle S_L^{(o)}(\overline{V})  \bigr] = \E\bigl[{\bf \Lambda}_{|\F}^{(o)}(\cdot)\bigr],\\
{\bf \Lambda}_{}^{(e)}(\cdot) &= \E\bigl[\langle S_L^{(e)}(\overline{V}) + R_L^{(o)}(V), \cdot \rangle \bigl(S_L^{(e)}(\overline{V}) + R_L^{(o)}(V)\bigr) \bigr].
\end{align}

The proof relies on the following decomposition
\begin{align}
\Delta_n(\mu) \leq \E\bigl[\sup_{x \in \R}\bigl({\bf I}_L(x) + {\bf II}_L(x)\bigr)\bigr] + \sup_{x\in \R}\big|\E\bigl[{\bf III}_L(x)\bigr]\bigr| + \sup_{x\in \R}{\bf IV}_L(x),
\end{align}
where
\begin{align}\nonumber
{\bf I}_L(x) &= \Big|P_{|\F}\Big(\big\|S_L^{(o)}(\overline{V}) + S_L^{(e)}({V}) + R_L^{(o)}(V) + \mu\big\|_{\HH}\leq x \Big) \\ \nonumber &- P_{|\F}\Big(\big\|Z_{L|\F}^{(o)} + S_L^{(e)}({V}) + R_L^{(o)}(V) + \mu\big\|_{\HH}\leq x \Big) \Big|,\\ \nonumber
{\bf II}_L(x) &= \Big|P_{|\F}\Big(\big\|Z_{L|\F}^{(o)} + S_L^{(e)}({V}) + R_L^{(o)}(V) + \mu\big\|_{\HH}\leq x \Big) \\ \nonumber &- P_{|\F}\Big(\big\|Z_{L}^{(o)} + S_L^{(e)}({V}) + R_L^{(o)}(V) + \mu\big\|_{\HH}\leq x \Big) \Big|,\\ \nonumber
{\bf III}_L(x) &= P_{|\F}\Big(\big\|Z_{L}^{(o)} + S_L^{(e)}({V}) + R_L^{(o)}(V) + \mu\big\|_{\HH}\leq x \Big) \\ \nonumber &- P_{|\F}\Big(\big\|Z_{L}^{(o)} + Z_L^{(e,o)} + \mu\big\|_{\HH}\leq x \Big),\\
{\bf IV}_L(x) &= \Big| P_{|\F}\Big(\big\|Z_{L}^{(o)} + Z_L^{(e,o)} + \mu\big\|_{\HH}\leq x \Big) - P\Big(\big\|Z_{\Lambda} + \mu\big\|_{\HH}\leq x \Big)\Big|.
\end{align}

Below, we derive separate bounds for all four quantities. The key step is dealing with ${\bf II}_L(x)$, where the dependence gets disentangled asymptotically to independence.

{\bf Case ${\bf I}_L(x)$}: Here we apply Lemma \ref{lem_ulyanov} under the conditional probability $P_{|\F}$, which makes all involved quantities random. To ensure applicability, we need to rule out any pathologies in advance. In particular, we need to control the eigenvalues of the random operator ${\bf \Lambda}_{|\F}^{(o)}$.
To this end, let $\mathcal{C}_{\delta} = \bigl\{ \|{\bf \Lambda}_{|\F}^{(o)} - {\bf \Lambda}_{}\|_{\HHS} \leq \delta \bigr\}$, $\delta > 0$, and put $\mathcal{I}_l' = \{k: \, (l-1)K + 1 < k \leq (l K -1)\}$, $1 \leq l \leq 2L$. Then we have by independence
\begin{align}\label{eq_equaliy_means_condi}
\E\bigl[X_k| \mathcal{F}_L^{(e)}\bigr] = \E\bigl[X_k\bigr], \quad k \in \mathcal{I}_{2l+1}'.
\end{align}
Using \eqref{eq_equaliy_means_condi}, routine calculations reveal that
\begin{align*}
\bigl\|\E[\overline{V}_{2l+1} \langle \overline{V}_{2l+1},\cdot \rangle] - \E[{V}_{2l+1} \langle {V}_{2l+1},\cdot \rangle] \bigr\|_{\HHS} < \infty
\end{align*}
since $K < \infty$. Applying this bound then leads to
\begin{align}\label{eq_thm_non_lin_5}
\bigl\|{\bf \Lambda}_{}^{(o)} -  {\bf \Lambda}_{}^{} \bigr\|_{\HHS} \lesssim \frac{L \E\bigl[\|X_0\|_{\HH}^2\bigr]}{n} \lesssim \frac{\E\bigl[\|X_0\|_{\HH}^2\bigr]}{K}.
\end{align}
Selecting $K$ sufficiently large (but finite), we thus obtain
\begin{align}
\mathcal{C}_{\delta}^c = \bigl\{\|{\bf \Lambda}_{|\F}^{(o)} - {\bf \Lambda}_{}\|_{\HHS} > \delta \bigr\} \subseteq \bigl\{ \|{\bf \Lambda}_{|\F}^{(o)} - {\bf \Lambda}_{}^{(o)}\|_{\HHS} > \delta/2 \bigr\},
\end{align}
and hence by Markovs inequality
\begin{align*}
P\bigl(\mathcal{C}_{\delta}^c\bigr) \leq (\delta/2)^{-2}\E\bigl[\|{\bf \Lambda}_{|\F}^{(o)} - {\bf \Lambda}_{}^{(o)}\|_{\HHS}^{2}\bigr].
\end{align*}
Denote with $\|\cdot\|_{\HSN}$ the Hilbert-Schmidt norm. Since $\|\cdot\|_{\HHS} \leq \|\cdot\|_{\HSN}$ and the space of Hilbert-Schmidt operators forms again a Hilbert space, using a Rosenthal inequality for Hilbert space valued sequences (cf. ~\cite{junge2003}), we obtain
\begin{align*}
&n\E[\|{\bf \Lambda}_{|\F}^{(o)} - {\bf \Lambda}_{}^{(o)}\|_{\HHS}^2\bigr] \\&\lesssim \sum_{l = 0}^{L} \E\bigl[\bigl\|\E[\overline{V}_{2l+1} \langle \overline{V}_{2l+1},\cdot \rangle | \mathcal{F}_L^{(e)}] - \E[\overline{V}_{2l+1} \langle \overline{V}_{2l+1},\cdot \rangle] \bigr\|_{\HSN}^{2} \bigr]\\&\lesssim \sum_{l = 0}^{L} \E\bigl[\bigl\|\overline{V}_{2l+1}\bigr\|_{\HH}^{2}\bigr] \lesssim \sum_{l = 0}^{L} \E\bigl[\bigl\|\sum_{k \in \mathcal{I}_l} X_k \bigr\|_{\HH}^{4}\bigr] \lesssim L \E\bigl[\|X_0\|_{\HH}^4\bigr].
\end{align*}
This together with the above yields $P(\mathcal{C}_{\delta}^c) \lesssim (\delta^2 L)^{-1} \E\bigl[\|X_0\|_{\HH}^4\bigr]$. Next, let
\begin{align*}
T_l = \E_{}\bigl[\|\overline{V}_{2l+1}\|_{\HH}^2 \ind(\|\overline{V}_{2l+1} \|_{\HH} \geq n^{1/2}) + n^{-1/2}\|\overline{V}_{2l+1}\|_{\HH}^3 \bigl|\mathcal{F}_L^{(e)} \bigr],
\end{align*}
where routine calculations reveal that
\begin{align}\label{eq_thm_two_dep_4}
\E\bigl[T_l\bigr] \lesssim n^{-1/2} \E\bigl[\|\overline{V}_{2l+1}\|_{\HH}^3\bigr] \lesssim n^{-1/2}\E\bigl[\|X_0\|_{\HH}^3\bigr].
\end{align}
Note that $\{T_l\}_{0 \leq l \leq L}$ is a sequence of independent, real valued random variables. Denote with
\begin{align}
\mathcal{D}_{} = \Bigl\{ \Bigl|\sum_{l = 0}^{L} \bigl(T_l - \E[T_l]\bigr)\Bigr| \leq L^{1/2} \Bigr\}.
\end{align}
Using Burkholders, triangle and Jensens inequality, we get
\begin{align*}
\Big\|\sum_{l = 0}^{L} \bigl(T_l - \E[T_l]\bigr)\Bigr\|_{3/2}^{3/2} \lesssim \sum_{l = 0}^{L} \bigl\|T_l - \E[T_l]\bigr\|_{3/2}^{3/2} \lesssim L n^{-3/4} \E\bigl[\|X_0\|_{\HH}^{9/2}\bigr].
\end{align*}
Hence we conclude via Markovs inequality
\begin{align}\label{eq_thm_two_dep_5}
P\bigl(\mathcal{D}^c\bigr) \lesssim L^{1 - 3/4} n^{-3/4}\E\bigl[\|X_0\|_{\HH}^{9/2}\bigr] \lesssim L^{-1/2}\E\bigl[\|X_0\|_{\HH}^{9/2}\bigr].
\end{align}
We are now in position to derive the actual bound. Observe that
\begin{align}
\E\bigl[\sup_{x \in \R}{\bf I}_L(x)\bigr] \leq 2P\bigl(\mathcal{C}_{\delta}^c\bigr) + 2P\bigl(\mathcal{D}_{}^c\bigr) + \E\bigl[\sup_{x \in \R}{\bf I}_L(x)\ind_{\mathcal{C}_{\delta} \cap \mathcal{D}}\bigr].
\end{align}
Since $S_L^{(e)}({V}),R_L^{(o)}(V) \in \mathcal{F}_L^{(e)}$ and $\{\overline{V}_l\}_{0 \leq l \leq L}$ is a sequence of independent zero mean random variables under $P_{|\F}$, applying Lemma \ref{lem_ulyanov} leads to
\begin{align}
&\sup_{x \in \R}{\bf I}_L(x)\ind_{\mathcal{C}_{\delta} \cap \mathcal{D}} \leq C_{|\F}\ind_{\mathcal{C}_{\delta} \cap \mathcal{D}}\bigl(1 + \|S_L^{(e)}({V}) + R_L^{(o)}(V) + \mu\|_{\HH}^3\bigr) n^{-1}\sum_{l = 0}^{L} T_l.
\end{align}

Since $\lambda_{13} > 0$, selecting $\delta > 0$ sufficiently small (and $K = K_{\delta}$ sufficiently large) and using Lemma \ref{lem_eigen_gen_upper_bound}, we may bound $C_{|\F}\ind_{\mathcal{C}_{\delta}} \leq C_{\lambda}$, where $C_{\lambda}$ only depends on $\lambda_{13}$. In addition, by construction of the set $\mathcal{D}$, we have
\begin{align*}
&\E\Big[\ind_{\mathcal{D}}\bigl(1 + \|S_L^{(e)}({V}) + R_L^{(o)}(V) + \mu\|_{\HH}^3\bigr) n^{-1}\sum_{l = 0}^{L} T_l\Bigr] \\&\lesssim \E\bigl[1 + \|S_L^{(e)}({V}) + R_L^{(o)}(V) + \mu\|_{\HH}^3\bigr]\Big(n^{-1}\sum_{l = 0}^{L} \E\bigl[T_l\bigr] + L^{-1/2}\Big).
\end{align*}
Using \eqref{eq_thm_two_dep_4} and Rosenthals inequality, the above is further bounded by $L^{-1/2}\bigl(1 + \|\mu\|_{\HH}^3\bigr)\E\bigl[\|X_0\|_{\HH}^{3}\bigr]$, and hence we obtain
\begin{align}
\E\Big[\ind_{\mathcal{D}}\bigl(1 + \|S_L^{(e)}({V}) + R_L^{(o)}(V) + \mu\|_{\HH}^3\bigr) n^{-1}\sum_{l = 0}^{L} T_l\bigr] \lesssim L^{-1/2}\bigl(1 + \|\mu\|_{\HH}^3\bigr)\E\bigl[\|X_0\|_{\HH}^{3}\bigr].
\end{align}
We thus conclude from \eqref{eq_thm_two_dep_5}
\begin{align}\label{eq_thm_two_dep_7}
\E\bigl[\sup_{x \in \R}{\bf I}_L(x)\bigr] \lesssim L^{-1/2}\bigl(1 + \|\mu\|_{\HH}^3\bigr)\E\bigl[\|X_0\|_{\HH}^{9/2}\bigr].
\end{align}

{\bf Case ${\bf II}_L(x)$}:
Under the measure $P_{|\F}$, let $\bigl\{\xi_{k|\F}\bigr\}_{1 \leq k \leq n}$ be a sequence of independent, zero mean Gaussian random variables, where each $\xi_{k|\F}$ has covariance operator ${\bf \Lambda}_{|\F}^{(o)}$ (instead of sample size $n$ we could also select $N > n$, but $n$ is sufficient). Similarly, let  $\bigl\{\xi_{k}\bigr\}_{1 \leq k \leq n}$ be IID Gaussian sequences independent of $\mathcal{F}_L^{(e)}$, with covariance operator ${\bf \Lambda}_{}^{(o)}$. Next, introduce the mixed partial sums
\begin{align}
\sqrt{n}W_{k|\F} = \sum_{j = 1}^{k-1} \xi_{k|\F} + \sum_{j = k + 1}^n \xi_{k}, \quad 1 \leq k \leq n,
\end{align}
and the corresponding conditional covariance operators
\begin{align}\nonumber
\tilde{\bf \Lambda}_{k|\F}(\cdot) &= \E\bigl[\langle W_{k|\F}, \cdot \rangle W_{k|\F} |\mathcal{F}_L^{(e)} \bigr]\\&=\frac{k-1}{n}{\bf \Lambda}_{|\F}^{(o)}(\cdot) + \frac{n-k}{n}{\bf \Lambda}^{(o)}(\cdot),
\end{align}
with eigenvalues $\tilde{\lambda}_{k,j|\F}$, $j \in \N$. Since these are random, we need to control them as in the previous case ${\bf I}_L(x)$. To this end, define the set
\begin{align}
\tilde{\mathcal{C}}_{\delta} = \bigl\{\max_{1 \leq k \leq n}\|\tilde{\bf \Lambda}_{k|\F} - {\bf \Lambda}^{}\|_{\HHS} \geq \delta \bigr\}.
\end{align}
Using \eqref{eq_equaliy_means_condi} and \eqref{eq_thm_non_lin_5} yields
\begin{align*}
\max_{1 \leq k \leq n}\bigl\|\E[\tilde{\bf \Lambda}_{k|\F}] - {\bf \Lambda} \bigr\|_{\HHS} \leq  \bigl\|{\bf \Lambda}^{(o)} - {\bf \Lambda} \bigr\|_{\HHS} + \bigl\|{\bf \Lambda}\bigr\|_{\HHS}/n \lesssim \frac{L \E\bigl[\|X_0\|_{\HH}^{2}\bigr]}{n} = \frac{\E\bigl[\|X_0\|_{\HH}^{2}\bigr]}{K}.
\end{align*}
Proceeding similarly as in the treatment of ${\mathcal{C}}_{\delta}^c$ and using Burkholders inequality for Hilbert-space valued martingales (cf. ~\cite{junge2003}), it follows that for large enough $K$
\begin{align}\label{eq_thm_non_lin_6}
P\bigl(\tilde{\mathcal{C}}_{\delta}^c \bigr)\lesssim (\delta^2 L)^{-1} \E\bigl[\|X_0\|_{\HH}^{4}\bigr], \quad \text{$\delta > 0$ sufficiently small.}
\end{align}
Next, following the same approach as in Lemma \ref{lem_gauss_hoelder_inf} and using similar arguments as in the previous case ${\bf I}_L(x)$, we obtain from \eqref{eq_thm_non_lin_6} that
\begin{align}\nonumber \label{eq_thm_non_lin_7}
\E\bigl[\sup_{x \in \R}{\bf II}_L(x)\bigr] &\leq 2P\bigl(\tilde{\mathcal{C}}_{\delta}^c\bigr) + \E\bigl[\sup_{x \in \R}{\bf II}_L(x)\ind_{\tilde{\mathcal{C}}_{\delta}}\bigr]\\& \lesssim L^{-1/2}\bigl(1 + \|\mu\|_{\HH}^3\bigr)\E\bigl[\|X_0\|_{\HH}^{4}\bigr] +  \E\Big[\sum_{k = 1}^n \int_{\R} \frac{Q_{k|\F}(t)}{t }  d\,t \ind_{\tilde{\mathcal{C}}_{\delta}}\Big],
\end{align}
where
\begin{align*}
Q_{k|\F}(t) = |t|\bigl|\E\bigl[\exp(\ic t \| W_{k|\F} \|_{\HH}^2) | \mathcal{F}_L^{(e)} \bigr]\bigl(\E[\|\xi_{k|\F}\|_{\HH}^2|\mathcal{F}_L^{(e)} ] - \E[\|\xi_{k}\|_{\HH}^2|\mathcal{F}_L^{(e)}]\bigr) \bigr|, \quad t \in \R.
\end{align*}
By Lemma \ref{lem_gauss_char}, we have the bound
\begin{align}\label{eq_lem_gauss_hoelder_inf__condi_4}
\bigl|\E\bigl[\exp(\ic t \| W_{k|\F} \|_{\HH}^2) |\mathcal{F}_L^{(e)} \bigr]\bigr|\ind_{\tilde{\mathcal{C}}_{\delta}} \leq \prod_{j = 1}^{\infty} \bigl(1 + 4 t^2 \tilde{\lambda}_{k,j|\F}^2 \bigr)^{-1/4}\ind_{\tilde{\mathcal{C}}_{\delta}},
\end{align}
where we recall that $\tilde{\lambda}_{k,j|\F}$ denote the eigenvalues of the covariance operator $\tilde{\bf \Lambda}_{k|\F}$. In particular, they are bounded away from zero uniformly on the set $\tilde{\mathcal{C}}_{\delta}$ for $\delta > 0$ sufficiently small due to Lemma \ref{lem_eigen_gen_upper_bound}. Observe next that by independence of $\{\xi_k\}_{k \in \Z}$ from $\mathcal{F}_L^{(e)}$
\begin{align*}
\bigl|\E[\|\xi_{k|\F}\|_{\HH}^2|\mathcal{F}_L^{(e)}] - \E[\|\xi_{k}\|_{\HH}^2 |\mathcal{F}_L^{(e)}]\bigr| = n^{-1}\Bigl|\sum_{l = 0}^{L}\bigl(\E\bigl[\|\overline{V}_{2l+1}\|_{\HH}^2| \mathcal{F}_L^{(e)}\bigr] - \E\bigl[\|\overline{V}_{2l+1}\|_{\HH}^2\bigr]\bigr)\Bigr|.
\end{align*}
Since $\{\E\bigl[\|\overline{V}_{2l+1}\|_{\HH}^2| \mathcal{F}_L^{(e)}\bigr]\}_{0 \leq l \leq L}$ is a sequence of independent, real-valued random variables, Jensens and Rosenthals inequality yield
\begin{align}\label{eq_thm_non_lin_9}
\E\bigl[\bigl|\E[\|\xi_{k|\F}\|_{\HH}^2|\mathcal{F}_L^{(e)}] - \E[\|\xi_{k}\|_{\HH}^2 |\mathcal{F}_L^{(e)}]\bigr|\bigr] \lesssim L^{-1/2} \E\bigl[\|X_0\|_{\HH}^{4}\bigr]^{1/2}.
\end{align}
Combining \eqref{eq_lem_gauss_hoelder_inf__condi_4} with \eqref{eq_thm_non_lin_9}, we thus obtain
\begin{align}
\E\Big[\sum_{k = 1}^n \int_{\R} \frac{Q_{k|\F}(t)}{t }  d\,t \ind_{\tilde{\mathcal{C}}_{\delta}}\Big] \lesssim L^{-1/2} \E\bigl[\|X_0\|_{\HH}^{4}\bigr].
\end{align}
Together with \eqref{eq_thm_non_lin_7}, this yields the final bound
\begin{align}
\E\bigl[\sup_{x \in \R}{\bf II}_L(x)\bigr] \lesssim L^{-1/2} \E\bigl[\|X_0\|_{\HH}^{4}\bigr].
\end{align}

{\bf Case ${\bf III}_L(x)$}: By independence, we have
\begin{align*}
\E\bigl[{\bf III}_L(x)\bigr] &= P_{}\Big(\big\|Z_{L}^{(o)} + S_L^{(e)}({V}) + R_L^{(o)}(V) + \mu\big\|_{\HH}\leq x \Big) \\ \nonumber &- P_{}\Big(\big\|Z_{L}^{(o)} + Z_L^{(e,o)} + \mu\big\|_{\HH}\leq x \Big),
\end{align*}
hence we may directly appeal to Lemma \ref{lem_ulyanov}. Routine calculations then reveal
\begin{align}
\sup_{x \in \R}\bigl|\E\bigl[{\bf III}_L(x)\bigr]\bigr| \lesssim L^{-1/2}\bigl(1 + \|\mu\|_{\HH}^3\bigr) \E\bigl[\|X_0\|_{\HH}^{3}\bigr].
\end{align}

{\bf Case ${\bf IV}_L(x)$}: One readily verifies
\begin{align*}
\bigl|\tr\bigl({\bf \Lambda}\bigr) - \tr\bigl({\bf \Lambda}^{(o)} + {\bf \Lambda}^{(e)}\bigr) \bigr| \lesssim  L^{-1}\E\bigl[\|X_0\|_{\HH}^{2}\bigr].
\end{align*}
Since we have by independence
\begin{align*}
P_{|\F}\Big(\big\|Z_{L}^{(o)} + Z_L^{(e,o)} + \mu\big\|_{\HH}\leq x \Big) = P_{}\Big(\big\|Z_{L}^{(o)} + Z_L^{(e,o)} + \mu\big\|_{\HH}\leq x \Big),
\end{align*}
an application of Lemma \ref{lem_gauss_hoelder_inf} then yields
\begin{align}
\sup_{x \in \R}{\bf IV}_L(x) \lesssim L^{-1} \E\bigl[\|X_0\|_{\HH}^{2}\bigr].
\end{align}

Since $n \thicksim L$, combining all four bounds completes the proof. As a final remark, let us elaborate on the case where $L \not \in  \N$. In this case, we may have a slightly smaller additional remainder term $\tilde{R}_{L+1}$ in the decomposition
\begin{align}\label{final_decomp}
n^{-1/2}S_n(X) = S_L^{(o)}(\overline{V}) + S_L^{(e)}({V}) + R_L^{(o)}({V}) + \tilde{R}_{L+1},
\end{align}
which we can always add to the last summand, be it even or odd. This just results in more complicated notation, but the proof remains the same.
\end{proof}

\begin{proof}[Proof of Corollary \ref{cor_m_dependence}]
If $L = n/m \in \N$, writing
\begin{align*}
(nm)^{-1/2}S_n(X) = (n/m)^{-1/2}\sum_{l = 1}^L B_l/m, \quad B_l = \sum_{k = (l-1)m+1}^{lm} X_k,
\end{align*}
we may directly apply Theorem \ref{thm_non_lin}. If $L \not \in \N$, we have an additional remainder part, which however does not require any particular different treatment, see also the remark around \eqref{final_decomp}.
\end{proof}

\begin{proof}[Proof of Corollary \ref{cor_geo_dependence}]
By virtue of Theorem \ref{thm_non_lin}, we may almost identically repeat the proof of Theorem 2.1 together with Corollary 2.2 in ~\cite{Jirak_2015_pol}.
\end{proof}


\section*{Acknowledgements}

I would like to thank the Associate Editor and the anonymous Reviewers for the constructive comments and suggestions. The generous help has been of major benefit.

\begin{small}

\end{small}

\end{document}